\numberwithin{equation}{section}
\numberwithin{figure}{section}
\numberwithin{table}{section}
\newcommand{\tprod}[1][]{\textstyle\prod#1\displaystyle}
		\newcounter{romi}
\newenvironment{romanize}{\begin{list}{\addtocounter{romi}{1}\hspace{-3em}\textbf{(\roman{romi})}}{}}{\setcounter{romi}{0}\end{list}}
		\newcounter{alphi}
\newcommand{\mtrx}[1]{\left[ \begin{array}#1 \end{array}\right]}
\newcommand{\qand}{\quad\text{and}\quad}
\newcommand{\qfor}{\quad\text{for}\quad}
\theoremstyle{definition}
\newtheorem{thm}{Theorem}[section]
\newtheorem{prop}[thm]{Proposition}
\newtheorem{lem}[thm]{Lemma}
\newtheorem{cor}[thm]{Corollary}
\newtheorem{df}[thm]{Definition}
\newtheorem{rem}[thm]{Remark}
\newtheorem{exa}[thm]{Example}
\newenvironment{ex}{\begin{exa}}{\hfill$\lozenge$\end{exa}}
\newcommand{\Z}{\mathbb{Z}}
\newcommand{\C}{\mathbb{C}}
\newcommand{\lan}{\langle}
\newcommand{\ran}{\rangle}
\newcommand{\llan}{\lan\!\lan}
\newcommand{\rran}{\ran\!\ran}
\newcommand{\p}{\partial}
\newcommand{\op}{\oplus}
\newcommand{\cE}{\mathcal{E}}
\newcommand{\cF}{\mathcal{F}}
\newcommand{\fg}{\mathfrak{g}}
\newcommand{\fgch}{\mathfrak{g}^\vee}
\newcommand{\Gch}{G^\vee}
\newcommand{\cI}{\mathcal{I}}
\newcommand{\OP}{\mathbb{OP}}
\newcommand{\Pch}{P^\vee}
\newcommand{\cR}{\mathcal{R}}
\newcommand{\ft}{\mathfrak{t}}
\newcommand{\fu}{\mathfrak{u}}
\newcommand{\cU}{\mathcal{U}}
\newcommand{\cW}{\mathcal{W}}
\newcommand{\cX}{\mathcal{X}}
\newcommand{\cZ}{\mathcal{Z}}
\newcommand{\CP}{\mathbb{CP}}								
\newcommand{\Gr}{\mathrm{Gr}}								
\newcommand{\SC}{\mathrm{sc}}								
\newcommand{\indx}{\mathrm{index}}
\newcommand{\GL}{\mathrm{GL}}								
\newcommand{\SL}{\mathrm{SL}}								
\newcommand{\OG}{\mathrm{OG}}							
\newcommand{\LG}{\mathrm{LG}}								
\newcommand{\Mat}{\mathrm{Mat}}						
\newcommand{\Spin}{\mathrm{Spin}}					
\newcommand{\Sp}{\mathrm{Sp}}								
\newcommand{\X}{X}																	
\newcommand{\G}{G}																	
\renewcommand{\P}{P}																
\newcommand{\g}{\fg}																
\newcommand{\up}{\fu_+}													
\newcommand{\um}{\fu_-}													
\newcommand{\cartan}{\ft}													
\newcommand{\parab}{\mathfrak{p}}					
\newcommand{\UEAp}{\cU_+}											
\newcommand{\CUEAp}{\widehat{\cU}_+}			
\newcommand{\UEAm}{\cU_-}											
\newcommand{\CUEAm}{\widehat{\cU}_-}		
\newcommand{\torus}{T}														
\newcommand{\unip}{U_+}													
\newcommand{\unim}{U_-}													
\newcommand{\Che}{e}															
\newcommand{\Chf}{f}																
\newcommand{\Chh}{h}															
\newcommand{\x}{x}																	
\newcommand{\y}{y}																	
\newcommand{\borelp}{B_+}											
\newcommand{\borelm}{B_-}											
\newcommand{\ParabolicIndices}{I_P}					
\newcommand{\IP}{I^P}															
\newcommand{\roots}{\Phi}												
\newcommand{\proots}{\Phi_+}									
\newcommand{\nroots}{\Phi_-}										
\newcommand{\sroots}{\Delta}										
\newcommand{\sr}{\al}																
\newcommand{\chars}{\cX}											
\newcommand{\dG}{\Gch}													
\newcommand{\droots}{\Phi^\vee}							
\newcommand{\dchars}{\cX^\vee}							
\newcommand{\fwt}[1][i]{{\om_{#1}}}						
\newcommand{\dfwt}[1][i]{{\om^\vee_{#1}}}	
\newcommand{\pdroots}{\Phi^\vee_+}				
\newcommand{\ndroots}{\Phi^\vee_-}					
\newcommand{\sdroots}{\Delta\!^\vee}					
\newcommand{\sdr}{\al^\vee}										
\newcommand{\drt}{\al^\vee}											
\newcommand{\dP}{\Pch}														
\newcommand{\dtorus}{T^\vee}									
\newcommand{\dunip}{U^\vee_+}								
\newcommand{\dunim}{U^\vee_-}								
\newcommand{\dborelp}{B^\vee_+}						
\newcommand{\dborelm}{B^\vee_-}						
\newcommand{\dg}{\fgch}													
\newcommand{\dup}{\fu^\vee_+}								
\newcommand{\dum}{\fu^\vee_-}								
\newcommand{\dcartan}{\ft^\vee}							
\newcommand{\dparab}{\mathfrak{p}^\vee}
\newcommand{\dChe}{e^\vee}										
\newcommand{\dChf}{f^\vee}											
\newcommand{\dChh}{h^\vee}										
\newcommand{\dx}{x^\vee}												
\newcommand{\dy}{y^\vee}												
\newcommand{\udG}{\widetilde{G}^\vee}						
\newcommand{\udP}{\widetilde{P}^\vee}							
\newcommand{\udtorus}{\widetilde{T}^\vee}				
\newcommand{\udborelp}{\widetilde{B}^\vee_+}	
\newcommand{\udborelm}{\widetilde{B}^\vee_-}	
\newcommand{\weyl}{W}														
\newcommand{\wo}{w_0}														
\newcommand{\weylp}{W_P}												
\newcommand{\wop}{w_P}													
\newcommand{\invdtorus}{T^P}									
\newcommand{\cosets}{W^P}											
\newcommand{\wP}{w^P}														
\newcommand{\wPinv}{(\wP)^{-1}}								
\newcommand{\wPrime}{w'}												
\newcommand{\wPPrime}{w''}											
\newcommand{\ellwP}{\ell}													
\newcommand{\ellwPrime}{{\ell'}}								
\newcommand{\ellwPPrime}{{\ell''}}							
\newcommand{\ds}{\dot{s}}												
\newcommand{\bs}{\bar{s}}												
\newcommand{\dw}{\dot{w}}											
\newcommand{\bw}{\bar{w}}											
\newcommand{\bwo}{\bw_0}											
\newcommand{\dwop}{\dw_P}										
\newcommand{\bwop}{\bw_P}										
\newcommand{\dwP}{\dw^P}											
\newcommand{\bwP}{\bw^P}											
\newcommand{\bwPinv}{(\bwP)^{-1}}						
\newcommand{\dwPPrime}{\dw''}								
\newcommand{\wrep}{w_c}													
\newcommand{\bwrep}{\bar{\wrep}}						
\newcommand{\dwrep}{\dot{\wrep}}						
\newcommand{\Normalizer}{N}									
\newcommand{\mX}{X^\vee}											
\newcommand{\dRichard}{\cR^\vee}						
\newcommand{\pot}{\cW}													
\newcommand{\loc}{\mathrm{loc}}							
\newcommand{\Jac}{\lan\p\pot\ran}						
\newcommand{\decomps}{\cZ^\vee_P}				
\newcommand{\RisoZ}{\Psi}												
\newcommand{\dunimP}{U_-^P}									
\newcommand{\dgdual}{(\dg)^*}								
\newcommand{\dChedual}[1][_i]{(\dChe#1)^*}	
\newcommand{\dChfdual}[1][_i]{(\dChf#1)^*}		
\newcommand{\dCUEAp}{\widehat{\cU}^\vee_+}	
\newcommand{\dUEAm}{\cU^\vee_-}									
\newcommand{\dCUEAm}{\widehat{\cU}^\vee_-}	
\newcommand{\SumOfEs}{\cE^*}
\newcommand{\SumOfFs}{\cF^*}
\newcommand{\potZ}{\pot_{\decomps}}						
\newcommand{\opendecomps}{\cZ^\circ_P}			
\newcommand{\opendunim}{U^\circ_-}							
\newcommand{\potZo}{\pot_{\opendecomps}}		
\newcommand{\wPrimeSubExp}{\cI}									
\newcommand{\potEHX}{L}																
\newcommand{\dfwtrep}[1][i]{V_{\dfwt[#1]}}						
\newcommand{\hwt}[1][i]{v_{\dfwt[#1]}^+}						
\newcommand{\lwt}[1][i]{v_{\dfwt[#1]}^-}							
\newcommand{\minwt}{\la^\vee}																
\newcommand{\minrep}{V(\minwt)}													
\newcommand{\minrepwts}{M(\minwt)}													
\newcommand{\wt}{\mu^\vee}																	
\newcommand{\wtv}[1]{v_{#1}}													
\newcommand{\wtvmu}{v_{\mu^\vee}}												
\newcommand{\PerrinQ}{Q}															
\newcommand{\NumOcc}{m}														
\newcommand{\wPindex}{\!\!\mathscr{J}\!}
\newcommand{\rtb}{\beta}
\newcommand{\ihat}{i^*}
\newcommand{\itil}{\tilde{\imath}}
\newcommand{\jtil}{\tilde{\jmath}}
\newcommand{\lex}{\prec}											
\newcommand{\wPrimeSubsets}{\mathcal{S}}
\newcommand{\LGT}[1]{#1}											
\newcommand{\LGA}{\LGT{A}}										
\newcommand{\LGB}{\LGT{B}}										
\newcommand{\LGC}{\LGT{C}}										
\newcommand{\LGD}{\LGT{D}}									
\newcommand{\LGE}{\LGT{E}}										
\newcommand{\LGF}{\LGT{F}}										
\newcommand{\LGG}{\LGT{G}}									
\newcommand{\al}{\alpha}
\newcommand{\de}{\delta}
\newcommand{\la}{\lambda}
\newcommand{\om}{\omega}
\newcommand{\ali}[1]{\begin{align*} #1 \end{align*}}
		\newcounter{marginboxes}
\begin{document}
\title[Laurent polynomial LG-models for cominuscule homogeneous spaces]{Laurent polynomial Landau-Ginzburg models for cominuscule homogeneous spaces}
\author{Peter Spacek}
\address{SMSAS, University of Kent, Canterbury, CT2 7FS, United Kingdom}
\email{ps508@kent.ac.uk}
\date{\today}

\begin{abstract}
In this article we construct Laurent polynomial Landau-Ginzburg models for cominuscule homogeneous spaces. These Laurent polynomial potentials are defined on a particular algebraic torus inside the Lie-theoretic mirror model constructed for arbitrary homogeneous spaces in \cite{Rietsch_Mirror_Construction}. The Laurent polynomial takes a similar shape to the one given in \cite{Givental_Equivariant_Gromov_Witten_invariants} for projective complete intersections, i.e.~it is the sum of the toric coordinates plus a quantum term. We also give a general enumeration method for the summands in the quantum term of the potential in terms of the quiver introduced in \cite{CMP_Quantum_cohomology_of_minuscule_homogeneous_spaces}, associated to the Langlands dual homogeneous space. This enumeration method generalizes the use of Young diagrams for Grassmannians and Lagrangian Grassmannians and can be defined type-independently. The obtained Laurent polynomials coincide with the results obtained so far in \cite{Pech_Rietsch_Williams_Quadrics} and \cite{Pech_Rietsch_Lagrangian_Grassmannians} for quadrics and Lagrangian Grassmannians. We also obtain new Laurent polynomial Landau-Ginzburg models for orthogonal Grassmannians, the Cayley plane and the Freudenthal variety.
\end{abstract}

\maketitle
\setcounter{tocdepth}{1}
\tableofcontents

\section{Introduction}
Consider an arbitrary complete homogeneous space $\X=\G/\P$ for a simple and simply-connected complex algebraic group $\G$. In \cite{Rietsch_Mirror_Construction}, Rietsch gives a general construction of a Landau-Ginzburg model that recovers Peterson's presentation of the small quantum cohomology $qH^*(\X)$ in \cite{Peterson}.  In the subsequent articles \cite{Rietsch_Marsh_Grassmannians, Pech_Rietsch_Odd_Quadrics, Pech_Rietsch_Williams_Quadrics, Pech_Rietsch_Lagrangian_Grassmannians} this general construction is worked out in special cases of cominuscule homogeneous spaces (namely Grassmannians, odd quadrics, all quadrics, and Lagrangian Grassmannians, respectively) to formulate the potentials of these Landau-Ginzburg models in projective coordinates. To obtain these potentials, the articles first formulated Laurent polynomial expressions for these on an algebraic torus.

On comparing the methods used to determine these Laurent polynomial potentials, a general, type-independent method has emerged, which is what we will describe here. This is achieved by modifying the method as used in \cite{Pech_Rietsch_Odd_Quadrics} in order to circumvent considerations that only hold in the case of odd quadrics. In particular, we rely on the general structure of minuscule representations as described by the article \cite{Green_Reps_from_polytopes}. 

We obtain the following expression for the Laurent polynomial potential:
\begin{equation}
\sum_{i=1}^\ellwP a_i + q\frac{\sum_{(i_j)\in\wPrimeSubExp} a_{i_1}\cdots a_{i_{\ellwPrime}}}{\prod_{i=1}^\ellwP a_i},
\label{eq:Intro_Explicit_LP_LG_model}
\end{equation}
where $\ell=\dim(\X)$, the $a_i$ are the toric coordinates for $i\in\{1,\ldots,\ell\}$, and $q$ is the quantum parameter. The set $\wPrimeSubExp$ is the set of subexpressions of a certain Weyl group element $\wPrime$ in a fixed reduced expression of the minimal coset representative $\wP$ of the longest Weyl group element, see equation \eqref{eq:Fixed_reduced_expression_for_wP_and_wop} for $\wP$, equation \eqref{eq:Def_of_wPrime} for $\wPrime$ and Definition \ref{df:wPrimeSubExp} for $\wPrimeSubExp$. For the full statement of the result, see Theorem \ref{thm:Explicit_LP_LG_model}.

Notice that the expression in \eqref{eq:Intro_Explicit_LP_LG_model} is reminiscent of the Laurent polynomial for projective complete intersections given in \cite{Givental_Equivariant_Gromov_Witten_invariants}. Indeed, it is given as the sum of the toric coordinates $a_i$ plus a quantum term consisting of a homogeneous polynomial divided by the product of all the toric coordinates. We give a second type-independent description for this homogeneous polynomial: we replace the summation over $\wPrimeSubExp$ by a summation over the set $\wPrimeSubsets$ of special subsets of the quiver $\PerrinQ_\X$ associated to $\wP$ by \cite{Perrin_Small_resolutions_of_minuscule_Schubert_varieties,CMP_Quantum_cohomology_of_minuscule_homogeneous_spaces}, see Definitions \ref{df:PerrinQuiver} and \ref{df:wPrimeSubsets} as well as Corollary \ref{cor:Explicit_LP_LG_model_with_quiver_subsets}. These subsets of $\PerrinQ_\X$ can be considered as generalizations of Young tableaux used in a similar way.

We use the second type-independent expression to obtain Laurent polynomial potentials for all cominuscule homogeneous spaces: Grassmannians $\Gr(k,n)$, quadrics $Q_n$, Lagrangian Grassmannians $\LG(n,2n)$, orthogonal Grassmannians $\OG(n,2n)$, the Cayley plane $\LGE_6/\P_6$, and the Freudenthal variety $\LGE_7/\P_7$. The obtained expressions for quadrics and Lagrangian Grassmannians coincide with those given earlier in \cite{Pech_Rietsch_Williams_Quadrics} and \cite{Pech_Rietsch_Lagrangian_Grassmannians}. This is to be expected from the fact that the type-independent expression is a generalization of these cases. However, to the best of our knowledge, the expressions for orthogonal Grassmannians (for general $n$), the Cayley plane and the Freudenthal variety are new.

\begin{ex}
To illustrate expression \eqref{eq:Intro_Explicit_LP_LG_model}, consider a quadric $Q_d$ of dimension $d$, then the Laurent polynomial becomes:
\[
a_1+a_2+\ldots +a_d + q\frac{a_1+a_d}{a_1a_2\cdots a_d}.
\]
We find that the numerator equals $a_1+a_d$ as it turns out that $\wPrime=s_1$ and $s_1$ only appears as the first and last simple reflection in the reduced expression of $\wP$.

Secondly, consider the orthogonal Grassmannian $\OG(5,10)$: we find
\[
a_1+a_2+\ldots+a_{10} + q\frac{a_1a_2a_3 + a_1a_2a_{10} + a_1a_5a_{10} + a_1a_9a_{10}+a_6a_9a_{10}}{a_1a_2a_3a_4a_5a_6a_7a_8a_9a_{10}}.
\]
The homogeneous polynomial in the numerator corresponds nicely to five subsets of the quiver associated to $\wP$, see Example \ref{ex:OG}.
\end{ex}

The Laurent polynomial potentials will facilitate finding expressions using projective coordinates for the Landau-Ginzburg models constructed in \cite{Rietsch_Mirror_Construction} analogously to \cite{Rietsch_Marsh_Grassmannians,Pech_Rietsch_Odd_Quadrics,Pech_Rietsch_Williams_Quadrics,Pech_Rietsch_Lagrangian_Grassmannians}. This might even be done type-independently. Furthermore, in \cite{Rietsch_Mirror_Construction}, Rietsch also conjectures that Landau-Ginzburg models she constructed give rise to oscillatory integrals that are solutions to the quantum differential equations of $\X$, see Conjecture 8.1 there. This conjecture is verified in \cite{Pech_Rietsch_Williams_Quadrics} using their Laurent polynomial expression to describe a flat section of the Dubrovin connection. Thus, we expect that progress can be made in resolving this conjecture for other cominuscule examples using the results obtained here.

The outline of this article is as follows. We start in section \ref{sec:notation} with recalling some of the fundamentals required and fixing notation. This is followed by a short presentation of the results of \cite{Rietsch_Mirror_Construction} in section \ref{sec:Lie_theoretic_Mirror}. In section \ref{sec:structure_of_minuscule_representations} we restrict to cominuscule homogeneous spaces and consider the general structure of minuscule representations. Next, in section \ref{sec:LP_for_LG} we state our Laurent polynomial expression (Theorem \ref{thm:Explicit_LP_LG_model}) for the potential restricted to an open dense subset. We prove this expression in section \ref{sec:Proof_of_LP_for_LG}, postponing the proof of a number of intermediate results to section \ref{sec:Proof_of_Lemmas}. We deduce an alternative description of the quantum term (Corollary \ref{cor:Explicit_LP_LG_model_with_quiver_subsets}) using subsets of a specific quiver in section \ref{sec:quiver_enumeration}, which simplifies the calculation of the Laurent polynomials. Finally, we apply the expression in Corollary \ref{cor:Explicit_LP_LG_model_with_quiver_subsets} to all the cominuscule homogeneous spaces in section~\ref{sec:Application}, verifying that the expression coincide with \cite{Pech_Rietsch_Williams_Quadrics} and \cite{Pech_Rietsch_Lagrangian_Grassmannians} for quadrics and Lagrangian Grassmannians, and obtaining new Laurent polynomial potentials for orthogonal Grassmannians (subsection \ref{subsec:OG}), the Cayley plane (subsection \ref{subsec:Cayley}) and the Freudenthal variety (subsection \ref{subsec:Freudenthal}).

\subsection*{Acknowledgments.} We would like to thank Dr C.M.A. Pech for the many helpful discussions and her improvements to the readability of this text.

\section{Conventions and notation}\label{sec:notation}
Let $\X$ be a complete homogeneous space (also known as a generalized flag variety) for a simple and simply-connected complex algebraic group $\G$ of rank $n$. In this section and in section \ref{sec:Lie_theoretic_Mirror} we do not make any further assumptions on $\X$, but in the remaining sections will specialize to the case in which $\X$ is cominuscule, see section \ref{sec:structure_of_minuscule_representations}.

Write $\g$ for the Lie algebra of $\G$ and fix a set of Chevalley generators $(\Che_1,\Chf_1,\Chh_1,\ldots,\Che_n,\Chf_n,\Chh_n)$, where $h_i=[e_i,f_i]$ for $i\in\{1,\ldots,n\}$. This gives the decomposition $\g = \up\op\cartan\op\um$, where $\up$ is generated by $\{\Che_i~|~1\le i\le n\}$, $\um$~is generated by $\{\Chf_i\}$ and the Cartan subalgebra $\cartan$ is spanned by $\{\Chh_i\}$. We denote by $\UEAp$ and $\UEAm$ the universal enveloping algebras of $\up$ and $\um$ respectively, and we write their completions as $\CUEAp$ and $\CUEAm$.

Let $\torus$ be the maximal torus and $\unip$ and $\unim$ be the nilpotent subgroups of $\G$ that have $\cartan$, $\up$ and $\um$ as Lie algebras, respectively. Note that we can consider $\unip$ and $\unim$ as lying inside $\CUEAp$ and $\CUEAm$ respectively and that they are generated by the one-parameter subgroups
\[
\x_i(a) = \exp(a\,\Che_i) \qand \y_i(a) = \exp(a\,\Chf_i),
\]
for $i\in\{1,\ldots,n\}$ and $a\in\C$. Here $\exp(a\,\Che_i) = 1 + a\,\Che_i + \tfrac12a^2\,\Che_i^2 + \ldots \in \CUEAp$ and $\exp(a\,\Chf_i)\in\CUEAm$ is given analogously. The subgroup $\borelp=\torus\unip$ defines a Borel subgroup, and its opposite is given by $\borelm=\torus\unim$. They have $\cartan\op\up$ and $\cartan\op\um$ as respective associated Lie algebras. There is now a unique parabolic subgroup $P$ containing $\borelp$ such that $\X=\G/\P$. The Lie algebra $\parab$ of $\P$ satisfies $\up\op\cartan \subset\parab\subset\up\op\cartan\op\um=\g$, i.e.~it is generated by $\Che_i$ and $\Chh_i$ for all $i\in\{1,\ldots,n\}$ and by \emph{certain} $\Chf_i$, but not necessarily all. We will denote by $\ParabolicIndices\subset\{1,\ldots,n\}$ the set of indices such that $\parab$ is generated as a Lie algebra as
\begin{equation}
\parab = \lan \Che_i,\Chh_i,\Chf_j~|~i\in\{1,\ldots,n\},~j\in\ParabolicIndices\ran
\label{eq:Parabolic_Lie_algebra}
\end{equation}
and its complement is denoted by $\IP=\{1,\ldots,n\}\setminus\ParabolicIndices$.

We write $\chars$ for the lattice of characters $\chi:\torus\to\C^*$ of the maximal torus (written additively). Within $\chars$, we denote the set of roots by $\roots\subset\chars$ and a base of simple roots $\sroots=\{\sr_1,\ldots,\sr_n\}$ is determined by the Chevalley generators. The associated sets of positive and negative roots are denoted by $\proots$ and $\nroots$ respectively. We denote the cocharacter lattice by $\dchars$, the coroots by $\droots$ and the simple coroots by $\sdr_i:\chars\to\C$.

With a given root system $\roots$ and character lattice $\chars$, there exists a unique group $\dG$ determined by having as root system the coroots $\droots$ and as character lattice the cocharacter lattice $\dchars$ of $\G$. The character lattice $\dchars$ of $\dG$ also determines a maximal torus $\dtorus$ in $\dG$. The pair $(\dG,\dtorus)$ is called the \emph{Langlands dual pair} associated to $(\G,\torus)$; we call $\dG$ the \emph{Langlands dual group}.
\begin{rem}\label{rem:dG-is-adjoint}
As $\G$ is assumed to be simply-connected, $\dG$ will be adjoint.
\end{rem}
The Langlands dual group $\dG$ inherits the base of simple roots $\sdroots=\{\sdr_1,\ldots,\sdr_n\}$, which in turn determines the decomposition of the Lie algebra $\dg$ of $\dG$ into $\dg=\dum\op\dcartan\op\dup$. The Langlands dual groups $\dunip$, $\dunim$, $\dborelp$, $\dborelm$ and $\dP$ are now defined analogously to above, and we write $\pdroots$ and $\ndroots$ for the sets positive and negative roots of $\dG$. We also obtain Chevalley generators $(\dChe_1,\dChf_1,\dChh_1,\ldots,\dChe_n,\dChf_n,\dChh_n)$ and define the corresponding one-parameter subgroups of $\dunip$ and $\dunim$
\begin{equation}
\dx_i(a) = \exp(a\, \dChe_i) \qand \dy_i(a) = \exp(a\,\dChf_i)
\label{eq:dual_one_parameter_subgroups}
\end{equation}
for $i\in\{1,\ldots,n\}$ and $a\in\C$. Here $\exp(a\,\dChe_i)=1+a\,\dChe_i+\frac12a^2(\dChe_i)^2+\ldots\in\dCUEAp$ in the completed universal enveloping algebra of $\dup$, and analogously for $\exp(a\,\dChf_i)$. Note that the parabolic subgroup $\dP$ is associated to the same set $\ParabolicIndices$ as $\P$: that is, its Lie algebra is given by
\begin{equation}
\dparab = \lan \dChe_i,\dChh_i,\dChf_j~|~i\in\{1,\ldots,n\},~j\in\ParabolicIndices\ran.
\label{eq:Lie_algebra_of_dual_parabolic_subgroup}
\end{equation}
Thus, the complement of the set of indices is the same, so this is denoted by $\IP=\{1,\ldots,n\}\setminus\ParabolicIndices$ as well.
\begin{rem}\label{rem:LanglandsDuality}
When the Dynkin diagram of $\G$ is simply-laced, $\dG$ has the same Dynkin diagram (with the same numbering of the vertices). When the Dynkin diagram of $\G$ has a double or triple edge, the Dynkin diagram of $\dG$ is obtained by reversing the arrows at these edges. Explicitly, if $\G$ is of type $\LGA_n$, $\LGD_n$ or $\LGE_n$, then $\dG$ is of the same type; if $\G$ is of type $\LGB_n$, then $\dG$ is of type $\LGC_n$ and vice versa; finally, for $\G$ of type $\LGF_4$ and $\LGG_2$, $\dG$ is of the same type but has the reverse numbering of vertices.
\end{rem}
Apart from the Chevalley generators $(\dChe_1,\dChf_1,\dChh_1,\ldots,\dChe_n,\dChf_n,\dChh_n)$ for $\dg$, we will need the corresponding dual maps $\dChedual,\dChfdual\in\dgdual$ as well, satisfying
\begin{equation}
\dChedual(\dChe_j) = \de_{ij} = \dChfdual(\dChf_j) \qand \dChedual(\dChf_j)=0=\dChfdual(\dChe_j)
\label{eq:dualdualChevalleyGenerators}
\end{equation}
and vanishing on $\dcartan$ and the other root spaces of $\dg$.

We extend these maps to be defined on arbitrary products of the Chevalley generators using the inclusions of $\dup$ and $\dum$ into their completed universal algebras $\dCUEAp$ and $\dCUEAm$. This in turn allows us to define $\dChedual$ and $\dChfdual$ on $\dunip$ and $\dunim$ through the identification of the one-parameter subgroups $\dx_i(a)\in\dunip$ and $\dy_i(a)\in\dunim$ with $\exp(a\,\dChe_i)\in\dCUEAp$ and $\exp(a\,\dChf_i)\in\dCUEAm$ respectively. Equivalently, $\dChedual$ and $\dChfdual$ are defined as the unique group homomorphisms $\dunip\to\C$ and $\dunim\to\C$ such that
\begin{equation}
\dChedual(\dx_j(a)) = a\de_{ij} = \dChfdual(\dy_j(a)), 
\label{eq:e*_and_f*}
\end{equation}

As $\dG$ is in general not simply-connected, we will need to consider the universal cover $\udG$ of $\dG$ in section \ref{sec:Proof_of_LP_for_LG}. As before, we define the universal covers $\udP$, $\udtorus$, 
$\udborelp$ and $\udborelm$. 
Note that the cover of $\dunip$ is in fact isomorphic to $\dunip$ and the same holds for $\dunim$, so we simply identify them.
\begin{rem}
Considering Remark \ref{rem:LanglandsDuality}, we note that for a simply-connected group $\G$ with a simply-laced Dynkin diagram we have $\udG\cong\G$ as they are both simply-connected and of the same type.
\end{rem}

We turn to the Weyl groups of $\dG$ and $\dP$. We denote by $\weyl$ the Weyl group\footnote{Note that the Weyl groups of $\dG$ and $\G$ are isomorphic, as they only depend on the underlying Coxeter diagram of the Dynkin diagrams and these are invariant under Langlands duality. We therefore omit the ``$\vee$'' from notation. The same holds for the Weyl group $\weylp$ of $\dP$.} of $\dG$, that is the Weyl group associated to the Dynkin diagram of $\dG$. The Weyl group is generated by the simple reflections denoted by $s_i = s_{\sdr_i}$ for $\sdr_i\in\sdroots$ and any expression for $w\in\weyl$ of the form $w=s_{i_1}\cdots s_{i_j}$ with $j$ minimal is called a reduced expression; in this case the integer $j$ is called the length of $w$ and denoted by $\ell(w)=j$. The longest element of $\weyl$ is denoted by $\wo$. We obtain the Weyl group of $\dP$, denoted by $\weylp$, by removing the simple reflections $\{s_i~|~i\in\IP\}$ from the generators of $\weyl$, compare equation \eqref{eq:Lie_algebra_of_dual_parabolic_subgroup}. Note that $\weylp$ is a Weyl group in its own right, associated to the Dynkin diagram of $\dG$ with the vertices marked by $\IP$ removed. The longest element of $\weylp$ is denoted by $\wop$. To each $s_i\in\weyl$, we associate two elements in $\dG$:
\begin{equation}
\ds_i = \dx_i(1)\dy_i(-1)\dx_i(1) \qand \bs_i = \dx_i(-1)\dy_i(1)\dx_i(-1)=\ds_i^{-1},
\label{eq:def_ds_and_bs}
\end{equation}
and we extend this to an arbitrary $w\in\weyl$ with reduced expression $w=s_{i_1}\cdots s_{i_d}$ by setting $\dot w=\ds_{i_1}\cdots\ds_{i_d}$ and $\bar w =\bs_{i_1}\cdots\bs_{i_d}$. Note that $\bar w$ is not equal to $\dot w^{-1}$ in general: it has the reverse product of simple reflections, i.e.~$\dot w^{-1} = \bs_{i_d}\cdots\bs_{i_1}$. Moreover, note that $\ds_i$ and $\bs_i$ only differ by a torus element and both normalize $T$, so that they indeed map to the same element $s_i$ under the identification of $\weyl$ with $\Normalizer_{\dG}(\dtorus)/\dtorus$, the quotient of the normalizer of the maximal torus $\dtorus$ in $\dG$ by the torus.

We let 
\begin{equation}
\invdtorus=(\dtorus)^{\weylp}\subset\dtorus
\label{eq:df_invdtorus}
\end{equation} 
be the part of $\dtorus$ that is invariant under the action $\weylp\times\dtorus\to\dtorus$ given by $(w,t)\mapsto \dw t\dw^{-1}$. Clearly, $\invdtorus$ has dimension $\#\IP$.

We denote by $\cosets\subset\weyl$ the set of minimal coset representatives of $\weyl/\weylp$ (i.e.~for every coset the representative of minimal length), and we denote the minimal representative of $\wo\weylp$ by $\wP$. Note that $\wo=\wP\wop$, and we fix reduced expressions 
\begin{equation}
\wP=s_{r_1}\cdots s_{r_\ellwP} \qand \wop=s_{q_1}\cdots s_{q_m},
\label{eq:Fixed_reduced_expression_for_wP_and_wop}
\end{equation}
so we obtain a reduced expression $\wo=s_{r_1}\cdots s_{r_\ellwP}s_{q_1}\cdots s_{q_m}$.

\section{Rietsch's Lie-theoretic mirror model}\label{sec:Lie_theoretic_Mirror}
In \cite{Rietsch_Mirror_Construction}, Rietsch constructs a Landau-Ginzburg model for general homogeneous spaces $\X=\G/\P$ for $\P$ an arbitrary parabolic subgroup. The mirror variety there is a subvariety of the open Richardson variety associated to $(\wop,\wo)$. This open Richardson variety is given by:
\begin{equation}
\mX = \dRichard_{\wop,\wo} = \bigl(\dborelp\wop\dborelm\cap\dborelm\wo\dborelm\bigr)/\dborelm~\subset~\dG/\dborelm,
\label{eq:Richardson_var}
\end{equation}
see \cite{Rietsch_Mirror_Construction}, section 2. This variety turns out to be related the following subset of $\dG$:
\begin{equation}
\decomps = \dborelm\bwo^{-1}\cap\dunip\invdtorus\bwop\dunim~\subset~\dG.
\label{eq:decomps}
\end{equation}
Namely, there exists an isomorphism
\begin{equation}
\RisoZ:\mX\times\invdtorus\overset\sim\longrightarrow\decomps,
\label{eq:Decomposition_Isomorphism}
\end{equation}
whose inverse is given by $z\mapsto (z\dborelm,t)$ where $z=u_+t\bwop u_-$. 
\begin{rem} 
These results are analogous to the statements in \cite{Rietsch_Mirror_Construction}, section 4.1, although we have modified the definition of $\decomps$ compared to \cite{Rietsch_Mirror_Construction} to facilitate calculations in section \ref{sec:Proof_of_Lemmas}. Indeed, $\RisoZ$ can be obtained as follows: For a given $t\in\invdtorus$, a class $r\dborelm\in\dRichard_{\wop,\wo}$ allows by definition a representative of the form $r=b_-\bwo^{-1} b_-'=b_+\bwop tu_-$ (where $b_-,b_-'\in\dborelm$, $b_+\in\dborelp$ and $u_-\in\dunim$). Note that this representative is unique up to right-multiplication with an element of $\dunim$. Thus, writing $b_-'=t'u_-'$ for $t'\in\dtorus$ and $u_-'\in\dunim$, the element $r(u_-')^{-1}$ is independent of the choice of representative. Moreover, it is an element of $\dborelm\bwo^{-1}$, as $b_-\bwo^{-1} t'=b_-t''\bwo^{-1}$ for some $t''\in\dtorus$. Writing $b_+=t_+u_+$ for $t_+\in\dtorus$ and $u_+\in\dunip$, we find that $\RisoZ(z) = t_+^{-1}r(u_-')^{-1}$ is well-defined and an element of $\decomps$ (using $\bwop t=t\bwop$).
\end{rem}
Now, in \cite{Peterson}, Peterson gave a presentation of the quantum cohomology of a generalized flag variety $\G/\P$ as the coordinate ring of what is subsequently called the \emph{Peterson variety} (see e.g.~\cite{Rietsch_Mirror_Construction}, paragraph 3.2). The coordinate ring of a well-chosen open stratum of this (non-reduced) variety gives the quantum cohomology localized at the quantum parameters (see e.g.~\cite{Rietsch_Mirror_Construction}, equation (3.2)). In \cite{Rietsch_Mirror_Construction}, this open stratum is then shown to be isomorphic to the critical locus of a certain function on $\decomps$. Thus, using the isomorphism $\RisoZ$ of equation \eqref{eq:Decomposition_Isomorphism}, we obtain a subvariety of $\mX\times\invdtorus$ whose coordinate ring is also isomorphic to the localized quantum cohomology of $\X$.

Instead of presenting these statements in more detail, we will instead follow the reformulation of these results presented in \cite{Rietsch_Marsh_Grassmannians}, Theorem 6.5; see also section 4.2 of \cite{Pech_Rietsch_Odd_Quadrics}. There the critical locus is replaced with all of $\mX\times\invdtorus$, but one needs to take the quotient of the coordinate ring by the derivatives with respect to a potential.
\begin{thm}[\cite{Rietsch_Mirror_Construction}, Theorem 4.1, \emph{Lie-theoretic Landau-Ginzburg model}]\label{thm:Lie-theoretic_LG_model}
Let $\X=\G/\P$ be a complete homogeneous space with $\G$ a simple, simply-connected algebraic group over $\C$ and with $\P$ a (not necessarily maximal) parabolic subgroup. There exists a potential $\pot:\mX\times\invdtorus\to\C$ (given in Definition \ref{df:potential}) such that
\[
qH^*(\X)_\loc \cong \C[\mX\times\invdtorus]/\Jac,
\]
where $qH^*(\X)_\loc$ is the (small) quantum cohomology of $\X$ with all quantum parameters inverted and where $\Jac$ is the ideal generated by the derivatives of $\pot$ along $\mX$.
\end{thm}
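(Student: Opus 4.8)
The plan is to follow Rietsch's strategy in \cite{Rietsch_Mirror_Construction}: start from Peterson's Lie-theoretic presentation of $qH^*(\X)$, transport it to $\mX\times\invdtorus$ via the isomorphism $\RisoZ$ of \eqref{eq:Decomposition_Isomorphism}, and then recast the resulting ``critical locus'' description as the ``Jacobian ring'' appearing in the statement, following the reformulation of \cite{Rietsch_Marsh_Grassmannians}.

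First I would recall Peterson's theorem, which realizes $qH^*(\X)$ as the coordinate ring of a distinguished stratum of the Peterson variety inside the Langlands dual group $\dG$. The strata are indexed by parabolics, and the one attached to $\dP$ recovers the quantum cohomology of $\G/\P$; inverting the quantum parameters selects an open dense substratum whose coordinate ring I would identify with $qH^*(\X)_\loc$. Since $\decomps=\dborelm\bwo^{-1}\cap\dunip\invdtorus\bwop\dunim$ is precisely tailored to this open stratum, the isomorphism $\RisoZ:\mX\times\invdtorus\overset\sim\to\decomps$ lets me regard that coordinate ring as living on $\mX\times\invdtorus$.

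Next I would introduce the potential $\pot$ of Definition \ref{df:potential}, the regular function on $\decomps$ built from the dual Chevalley coordinate functions $\dChedual$ and $\dChfdual$ of the group element, with the quantum parameters entering through the $\invdtorus$-factor; the factorization $z=u_+t\bwop u_-$ furnished by the definition of $\decomps$ makes this explicit in terms of $u_\pm$ and $t$. The crux is then to show that the critical locus of $\pot$ along $\mX$ matches Peterson's open stratum under $\RisoZ$. This is the technical heart of the argument: one computes the equations $\p\pot=0$ in the coordinates coming from the Bruhat-type factorization of $\decomps$ and identifies them, term by term, with Peterson's defining equations for the stratum. I expect this matching to be the main obstacle, since it is exactly where the specific shape of the potential and the Lie-theoretic structure of the Peterson variety are indispensable.

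Finally, I would pass from the critical-locus description to the stated isomorphism. The reformulation of \cite{Rietsch_Marsh_Grassmannians}, Theorem 6.5, replaces the critical locus by all of $\mX\times\invdtorus$ at the cost of quotienting its coordinate ring by the ideal $\Jac$ generated by the derivatives of $\pot$ along $\mX$. As the critical locus is by construction the scheme cut out by those derivatives, its coordinate ring is $\C[\mX\times\invdtorus]/\Jac$, once one verifies that the scheme structures agree, i.e.~that the relevant stratum is reduced and that these derivatives generate its ideal. Combining this with the identification of the critical locus with Peterson's open stratum then yields $qH^*(\X)_\loc\cong\C[\mX\times\invdtorus]/\Jac$, as claimed.
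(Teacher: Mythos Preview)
Your proposal correctly outlines the strategy behind the result, but note that the paper does not actually prove this theorem: it is quoted from \cite{Rietsch_Mirror_Construction}, Theorem 4.1, with the reformulation via the Jacobian ring attributed to \cite{Rietsch_Marsh_Grassmannians}, Theorem 6.5 (see also \cite{Pech_Rietsch_Odd_Quadrics}, section 4.2). The paragraphs preceding the theorem statement explain exactly the context you describe---Peterson's presentation, the isomorphism $\RisoZ$, and the passage from critical locus to Jacobian ideal---but the paper explicitly says ``Instead of presenting these statements in more detail, we will instead follow the reformulation of these results presented in \cite{Rietsch_Marsh_Grassmannians}''. So there is no proof in the paper to compare against; your sketch is an accurate summary of how the cited references establish the result.
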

The potential $\pot$ is presented in \cite{Pech_Rietsch_Odd_Quadrics} as the pull-back of a potential defined on $\decomps$ along the isomorphism $\RisoZ:\mX\times\invdtorus\overset\sim\longrightarrow\decomps$ from equation \eqref{eq:Decomposition_Isomorphism}. To state this potential, we introduce to the following subset of $\dunim$:
\begin{equation}
\dunimP = \dunim\cap \dborelp\bwop\bwo\dborelp~\subset~\dunim.
\label{eq:unique_decomps_unipotents}
\end{equation}
This set has the following property, which will also be important in section \ref{sec:LP_for_LG}:
\begin{lem}[\cite{Pech_Rietsch_Odd_Quadrics}, Proposition 5.1]\label{lem:z_has_unique_decomposition}
Every $z\in\decomps$ has a \emph{unique} decomposition $z=u_+t\bwop u_-$ with $u_+\in\dunip$, $t\in\invdtorus$ and $u_-\in\dunimP$. In particular, fixing $(u_-,t)$ determines $u_+$.
\end{lem}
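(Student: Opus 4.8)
The plan is to reduce the statement to a transversality between a Bruhat cell and the group of ambiguities of a ``raw'' decomposition, and then to feed in the genericity hidden in the condition $z\in\dborelm\bwo^{-1}$. Since $\decomps\subset\dunip\invdtorus\bwop\dunim$, every $z\in\decomps$ admits \emph{some} decomposition $z=u_+t\bwop u_-$ with $u_+\in\dunip$, $t\in\invdtorus$, $u_-\in\dunim$. The final ``in particular'' is then automatic, as $u_+=z(t\bwop u_-)^{-1}$ is determined by $(t,u_-)$. So I only need to pin down $t$ and to produce a \emph{unique} $u_-$ lying in $\dunimP$.

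First I would compute the ambiguity of a raw decomposition. Equating two of them, $u_+t\bwop u_-=u_+'t'\bwop u_-'$, and solving for the torus factor gives $v\,t\,(\bwop h\bwop^{-1})=t'$ with $v=(u_+')^{-1}u_+\in\dunip$ and $h=u_-(u_-')^{-1}\in\dunim$; decomposing $h$ and invoking the uniqueness of the open-cell factorization $\dunip\times\dtorus\times\dunim\hookrightarrow\dG$ forces $t'=t$ and $h\in C$, where
\[
C:=\dunim\cap\bwop^{-1}\dunip\bwop=\dunim\cap\dP
\]
is the product of the negative root subgroups inside the Levi of $\dP$ (the roots made positive by $\wop$), of dimension $\ell(\wop)$. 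Conversely, replacing $u_-$ by $c^{-1}u_-$ and $u_+$ by $u_+\,t(\bwop c\bwop^{-1})t^{-1}$ yields, for each $c\in C$, another raw decomposition of the same $z$. Hence the raw decompositions of $z$ form exactly the left coset $Cu_-$, and the lemma becomes the assertion that $Cu_-$ meets $\dunimP$ in exactly one point.

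The genericity I need comes from $z\in\dborelm\bwo^{-1}$. From $\bwop u_-\bwo=t^{-1}u_+^{-1}(z\bwo)$ together with $z\bwo\in\dborelm$ I get $\bwop u_-\bwo\in\dborelp\dborelm$; since $\dborelm\bwo^{-1}=\bwo^{-1}\dborelp$, this is equivalent to
\[
\bwop u_-\in\dborelp\wo\dborelp,
\]
so $u_-$ lies in the open, $C$-invariant locus $\dunim^\circ$ on which $\bwop u_-$ sits in the big Bruhat cell. The crux is then the transversality claim that multiplication $C\times\dunimP\to\dunim$ is an isomorphism onto $\dunim^\circ$; granting it, every coset $Cu_-$ with $u_-\in\dunim^\circ$ meets $\dunimP$ in a single point, and combined with the previous paragraph this delivers both existence and uniqueness.

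To prove that transversality I would first record $\wop\wo=\wPinv$ (immediate from $\wo=\wP\wop$ and $\wop^2=1$), so that $\dunimP=\dunim\cap\dborelp\wPinv\dborelp$ is the trace on $\dunim$ of the Bruhat cell of $\wPinv$, of dimension $\ell(\wP)$; note $\dim C+\dim\dunimP=\ell(\wop)+\ell(\wP)=\ell(\wo)=\dim\dunim$, which is the numerology required for an isomorphism onto a dense open set. The hard part, and the main obstacle, is to control how left multiplication by $\bwop$ shifts Bruhat cells. Here I would use that the factorization $\wo=\wop\cdot\wPinv$ is length-additive, so that the big cell factors compatibly and one can apply the refined Bruhat decomposition $\dborelp w\dborelp=\dborelp\,\dw\,(\dunip\cap\dw^{-1}\dunim\dw)$ with its built-in uniqueness: for $u_-\in\dunim^\circ$ this should single out a unique $c\in C$ with $cu_-$ in the cell of $\wPinv$, giving injectivity (hence, by the dimension count, bijectivity) of $C\times\dunimP\to\dunim^\circ$. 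Turning this outline into a clean argument, in particular verifying that the big-cell locus $\dunim^\circ$ is \emph{exactly} $C\cdot\dunimP$ and that the multiplication is bijective rather than merely dominant, is where essentially all the work sits; the remaining steps are formal.
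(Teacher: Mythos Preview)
Your outline is sound. Note first that the paper does not give its own proof of this lemma: the remark immediately following it states that the argument of Proposition~5.1 in \cite{Pech_Rietsch_Odd_Quadrics} carries over to the general case without modification, and the proof is omitted. So there is no in-paper proof to compare against.

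Your reduction---the raw decompositions of a given $z$ form a single left $C$-coset $Cu_-$ inside $\dunim$ with $C=\dunim\cap\bwop^{-1}\dunip\bwop$, and the condition $z\in\dborelm\bwo^{-1}$ forces $\bwop u_-\in\dborelp\wo\dborelp$---is correct. The gap you flag closes cleanly. Since $\wo=\wop\cdot\wPinv$ is length-additive, the inversion-set decomposition gives $U_{\wo}=U_{\wop}\cdot\bigl(\bwop\,U_{\wPinv}\,\bwop^{-1}\bigr)$ as a direct product of varieties, and combining this with the Bruhat normal form $\dborelp w\dborelp=U_w\,\bar w\,\dborelp$ one obtains: every $g\in\dborelp\wo\dborelp$ factors \emph{uniquely} as $g=u_1\,\bwop\,g'$ with $u_1\in U_{\wop}=\dunip\cap\bwop\dunim\bwop^{-1}$ and $g'\in\dborelp\wPinv\dborelp$. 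Applied to $g=\bwop u_-$, this yields $c:=\bwop^{-1}u_1\bwop\in C$ and $g'=c^{-1}u_-\in\dunim\cap\dborelp\wPinv\dborelp=\dunimP$, both unique. That is exactly the assertion that $Cu_-\cap\dunimP$ is a singleton, which is all the lemma requires; the reverse inclusion $C\cdot\dunimP\subset\dunim^\circ$ that you also worry about is not needed.
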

\begin{rem}
The proof of this result in \cite{Pech_Rietsch_Odd_Quadrics} (Proposition 5.1) can be carried over to the general case without any modification, so it will be omitted here. Note that our definition of $\dunimP$ coincides with the one used in equation (7) of \cite{Pech_Rietsch_Odd_Quadrics} as $(\wP)^{-1}=\wop\wo$ and the chosen representatives only differ by a torus element, so that $\dborelp\bwop\bwo\dborelp=\dborelp(\dwP)^{-1}\dborelp$.
\end{rem}
The potential on $\decomps$ is now defined as follows:
\begin{df}\label{df:potential}
Define the potential $\potZ:\decomps\to\C$ as the map:
\[
\potZ:\quad z=u_+t\bwop u_- \longmapsto \SumOfEs(u_+^{-1}) + \SumOfFs(u_-),
\]
where $\SumOfEs = \sum_{i=1}^n\dChedual$ and $\SumOfFs=\sum_{i=1}^n\dChfdual$, and where the decomposition of $z=u_+t\bwop u_-$ is the unique decomposition with $u_-\in\dunimP$ as stated in Lemma \ref{lem:z_has_unique_decomposition}. Moreover, the potential $\pot:\mX\times\invdtorus\to\C$ mentioned in Theorem \ref{thm:Lie-theoretic_LG_model} is given by $\pot=\potZ\circ\RisoZ$ with $\RisoZ$ given in equation \eqref{eq:Decomposition_Isomorphism}.
\end{df}

\section{Cominuscule homogeneous spaces and minuscule representations}\label{sec:structure_of_minuscule_representations}
In section \ref{sec:LP_for_LG} we will give a Laurent polynomial expression for the potential $\potZ$ restricted to an open algebraic torus inside $\decomps$, after assuming the homogeneous space is \emph{cominuscule}. In this section we will discuss this property, fix further notation and finally consider \emph{minuscule representations} of a given Lie algebra.

We will maintain all the assumptions and conventions of section \ref{sec:notation}. In particular, we assume that $\X=\G/\P$ is a homogeneous space for a complex algebraic group $\G$ of rank $n$ that is both simple and simply-connected. Now, we assume in addition that $\X$ is \emph{minimal}. This is equivalent to the assumption that $\P$ is a \emph{maximal} parabolic subgroup; that is, $\IP=\{k\}$ for a single index $k\in\{1,\ldots,n\}$, see equation \eqref{eq:Parabolic_Lie_algebra}. Thus, $\P$ is associated to a single vertex $k$ of the Dynkin diagram of $\G$. This fact is denoted by $\P=\P_k$. Note that the Langlands dual group $\dP$ is associated to the $k$-th vertex of the Dynkin diagram of $\dG$ (see equation \eqref{eq:Lie_algebra_of_dual_parabolic_subgroup} and Remark \ref{rem:LanglandsDuality}), so we can also write $\dP=\dP_k$.

Because of the maximality of $\dP=\dP_k$, we know that $\weylp=\lan s_i~|~i\neq k\ran$. In particular, the invariant torus $\invdtorus\subset\dtorus$ is one-dimensional and $\sdr_k:\invdtorus\to\C^*$ gives an isomorphism ($\dG$ being adjoint). The element $\wop s_k\in\weyl$ will turn out to be of particular interest; we will write $\wPPrime\in\cosets$ for the minimal coset representative of $\wop s_k\weylp$ and denote its length by $\ell(\wPPrime)=\ellwPPrime\le\ellwP=\ell(\wP)$. Define $\wPrime\in\weyl$ by 
\begin{equation}
\wPrime = \wP(\wPPrime)^{-1}
\label{eq:Def_of_wPrime}
\end{equation}
and write $\ell(\wPrime)=\ellwPrime$; clearly $\ellwP=\ellwPrime+\ellwPPrime$.

The second assumption we will impose on $\X=\G/\P_k$ is that it is a \emph{cominuscule} homogeneous space. A minimal homogeneous space $\X=\G/\P_k$ is called (co-) minuscule if the fundamental weight $\fwt[k]$ is {(co-)} minuscule. Recall that the fundamental weights $\{\fwt[1],\ldots,\fwt[n]\}$ form a basis of the character lattice $\chars$ dual to the simple coroot basis $\sdroots=\{\sdr_1,\ldots,\sdr_n\}$ of the cocharacter lattice $\dchars$. A fundamental weight $\fwt$ is called \emph{minuscule} if it satisfies one of the following equivalent conditions (see also \cite{Bourbaki}, section VI.1, exercise 24):
\begin{romanize}
\item For every $\drt\in\droots$, $\llan \fwt,\drt\rran\in\{-1,0,+1\}$, where $\llan\cdot,\cdot\rran:\chars\times\dchars\to\C$ denotes the dual pairing.
\item For $\sdr_0$ the longest root of the root system $\droots$, $\llan\fwt,\sdr_0\rran=1$.
\item The coefficient of $\sdr_i$ in $\sdr_0$ is 1.
\end{romanize}
A fundamental weight $\fwt$ is called \emph{cominuscule} if the corresponding coweight $\dfwt$ is minuscule. (Recall that the coweights $\{\dfwt[1],\ldots,\dfwt[n]\}$ form a basis of $\dchars$ dual to the basis of simple root  $\sroots=\{\sr_1,\ldots,\sr_n\}$ of $\chars$.) The list of minuscule and cominuscule fundamental weights is well-known; we have included it in Table \ref{tab:CominusculeSpaces} together with the associated minimal homogeneous spaces $\X=\G/\P_k$.

\begin{table}[tbh]%
\[
\begin{array}{ccc|c|lcc}
\multicolumn{3}{c|}{\text{type and weight}} & \text{(co-) minuscule} & \text{variety} & \dim & \indx \vphantom{\dfrac MM}\\\hline
A_{n-1} & 
\begin{tikzpicture}[baseline = -0.75ex, scale = 0.625]
\coordinate (1) at (0,0);
\coordinate (2) at (0.625,0);
\coordinate (3) at (1.25,0);
\coordinate (3+) at (1.75,0);
\coordinate (n-) at (2.25,0);
\coordinate (n) at (2.75,0);
\draw[thick] (1)--(3+);
\draw[thick, dotted] (3+)--(n-);
\draw[thick] (n-)--(n);
\draw[black,fill=black] (1) circle (3pt); 
\draw[black,fill=black] (2) circle (3pt);
\draw[black,fill=black] (3) circle (3pt);
\draw[black,fill=black] (n) circle (3pt);
\end{tikzpicture}
&\text{any }k &\text{both}& \Gr(k,n) & k(n-k) & n  \vphantom{\dfrac MM}\\
B_n & 
\begin{tikzpicture}[baseline = -0.75ex, scale = 0.625]
\coordinate (1) at (0,0);
\coordinate (2) at (0.625,0);
\coordinate (2+) at (1.125,0);
\coordinate (n-1-) at (1.625,0);
\coordinate (n-1) at (2.125,0);
\coordinate (n) at (3.125,0);
\draw[thick] (1)--(2+);
\draw[thick, dotted] (2+)--(n-1-);
\draw[thick] (n-1-)--(n-);
\draw[thick,double,double distance = 1pt] (n-1)--(n);
\draw[black,fill=black] (1) circle (3pt); 
\draw[black,fill=white] (2) circle (3pt);
\draw[black,fill=white] (n-1) circle (3pt);
\draw[black,fill=white] (n) circle (3pt);
\node at (2.625,0) {$\boldsymbol{>}$};
\end{tikzpicture}
& 1 & \text{cominuscule} & Q_{2n-1} & 2n-1 & 2n-1    \vphantom{\dfrac MM}\\
B_n & 
\begin{tikzpicture}[baseline = -0.75ex, scale = 0.625]
\coordinate (1) at (0,0);
\coordinate (2) at (0.625,0);
\coordinate (2+) at (1.125,0);
\coordinate (n-1-) at (1.625,0);
\coordinate (n-1) at (2.125,0);
\coordinate (n) at (3.125,0);
\draw[thick] (1)--(2+);
\draw[thick, dotted] (2+)--(n-1-);
\draw[thick] (n-1-)--(n-);
\draw[thick,double,double distance = 1pt] (n-1)--(n);
\draw[black,fill=white] (1) circle (3pt); 
\draw[black,fill=white] (2) circle (3pt);
\draw[black,fill=white] (n-1) circle (3pt);
\draw[black,fill=black] (n) circle (3pt);
\node at (2.625,0) {$\boldsymbol{>}$};
\end{tikzpicture}
& n &\text{minuscule} & \OG(n,2n+1) &\frac12n(n+1) & 2n   \vphantom{\dfrac MM}\\
C_n & 
\begin{tikzpicture}[baseline = -0.75ex, scale = 0.625]
\coordinate (1) at (0,0);
\coordinate (2) at (0.625,0);
\coordinate (2+) at (1.125,0);
\coordinate (n-1-) at (1.625,0);
\coordinate (n-1) at (2.125,0);
\coordinate (n) at (3.125,0);
\draw[thick] (1)--(2+);
\draw[thick, dotted] (2+)--(n-1-);
\draw[thick] (n-1-)--(n-);
\draw[thick,double,double distance = 1pt] (n-1)--(n);
\draw[black,fill=black] (1) circle (3pt); 
\draw[black,fill=white] (2) circle (3pt);
\draw[black,fill=white] (n-1) circle (3pt);
\draw[black,fill=white] (n) circle (3pt);
\node at (2.625,0) {$\boldsymbol{<}$};
\end{tikzpicture}
& 1 & \text{minuscule} & \CP^{2n-1} & 2n-1 & 2n    \vphantom{\dfrac MM}\\
C_n & 
\begin{tikzpicture}[baseline = -0.75ex, scale = 0.625]
\coordinate (1) at (0,0);
\coordinate (2) at (0.625,0);
\coordinate (2+) at (1.125,0);
\coordinate (n-1-) at (1.625,0);
\coordinate (n-1) at (2.125,0);
\coordinate (n) at (3.125,0);
\draw[thick] (1)--(2+);
\draw[thick, dotted] (2+)--(n-1-);
\draw[thick] (n-1-)--(n-);
\draw[thick,double,double distance = 1pt] (n-1)--(n);
\draw[black,fill=white] (1) circle (3pt); 
\draw[black,fill=white] (2) circle (3pt);
\draw[black,fill=white] (n-1) circle (3pt);
\draw[black,fill=black] (n) circle (3pt);
\node at (2.625,0) {$\boldsymbol{<}$};
\end{tikzpicture}
& n & \text{cominuscule} & \LG(n,2n) &\frac12n(n+1) & n+1    \vphantom{\dfrac MM}\\
D_n & 
\begin{tikzpicture}[baseline = -0.75ex, scale = 0.625]
\coordinate (1) at (0,0);
\coordinate (2) at (0.625,0);
\coordinate (2+) at (1.125,0);
\coordinate (n-2-) at (1.625,0);
\coordinate (n-2) at (2.125,0);
\coordinate (n-1) at (2.75,0.2);
\coordinate (n) at (2.75,-0.2);
\draw[thick] (1)--(2)--(2+);
\draw[thick, dotted] (2+)--(n-2-);
\draw[thick] (n-2-)--(n-2)--(n-1);
\draw[thick] (n-2)--(n);
\draw[black,fill=black] (1) circle (3pt); 
\draw[black,fill=white] (2) circle (3pt);
\draw[black,fill=white] (n-2) circle (3pt);
\draw[black,fill=white] (n-1) circle (3pt);
\draw[black,fill=white] (n) circle (3pt);
\end{tikzpicture}
& 1 & \text{both} & Q_{2n-2} &2n-2 & 2n-2    \vphantom{\dfrac MM}\\
D_n & 
\begin{tikzpicture}[baseline = -0.75ex, scale = 0.625]
\coordinate (1) at (0,0);
\coordinate (2) at (0.625,0);
\coordinate (2+) at (1.125,0);
\coordinate (n-2-) at (1.625,0);
\coordinate (n-2) at (2.125,0);
\coordinate (n-1) at (2.75,0.2);
\coordinate (n) at (2.75,-0.2);
\draw[thick] (1)--(2)--(2+);
\draw[thick, dotted] (2+)--(n-2-);
\draw[thick] (n-2-)--(n-2)--(n-1);
\draw[thick] (n-2)--(n);
\draw[black,fill=white] (1) circle (3pt); 
\draw[black,fill=white] (2) circle (3pt);
\draw[black,fill=white] (n-2) circle (3pt);
\draw[black,fill=black] (n-1) circle (3pt);
\draw[black,fill=black] (n) circle (3pt);
\end{tikzpicture}
& n-1\text{ or }n & \text{both} & \OG(n,2n) &\frac12n(n-1) & 2n-2     \vphantom{\dfrac MM}\\
E_6 & 
\begin{tikzpicture}[baseline = 0.25ex, scale = 0.625]
\coordinate (1) at (0,0);
\coordinate (3) at (0.625,0);
\coordinate (4) at (1.25,0);
\coordinate (5) at (1.875,0);
\coordinate (6) at (2.5,0);
\coordinate (2) at (1.25,0.625);
\draw[thick] (1)--(3)--(4)--(5)--(6);
\draw[thick] (4)--(2);
\draw[black,fill=black] (1) circle (3pt); 
\draw[black,fill=white] (2) circle (3pt);
\draw[black,fill=white] (3) circle (3pt);
\draw[black,fill=white] (4) circle (3pt);
\draw[black,fill=white] (5) circle (3pt);
\draw[black,fill=black] (6) circle (3pt);
\end{tikzpicture}
& 1\text{ or }6 & \text{both} & \OP^2 = \LGE^\SC_6/P_6 & 16 & 12    \vphantom{\dfrac MM}\\
E_7 & 
\begin{tikzpicture}[baseline = 0.25ex, scale = 0.625]
\coordinate (1) at (0,0);
\coordinate (3) at (0.625,0);
\coordinate (4) at (1.25,0);
\coordinate (5) at (1.875,0);
\coordinate (6) at (2.5,0);
\coordinate (7) at (3.125,0);
\coordinate (2) at (1.25,0.625);
\draw[thick] (1)--(3)--(4)--(5)--(6)--(7);
\draw[thick] (4)--(2);
\draw[black,fill=white] (1) circle (3pt); 
\draw[black,fill=white] (2) circle (3pt);
\draw[black,fill=white] (3) circle (3pt);
\draw[black,fill=white] (4) circle (3pt);
\draw[black,fill=white] (5) circle (3pt);
\draw[black,fill=white] (6) circle (3pt);
\draw[black,fill=black] (7) circle (3pt);
\end{tikzpicture}
& 7 & \text{both} & \LGE^\SC_7/P_7 & 27 & 18   \vphantom{\dfrac M{\frac MM}} \\\hline
\end{array}
\]
\caption{Table listing for each type the fundamental weights that are minuscule, cominuscule or both, the associated homogeneous spaces and their dimensions and indexes. In this table, $Q_n$ denotes a quadric of dimension $n$; $\Gr(k,n)$ denotes the Grassmannian of $k$-dimensional subspaces in $\C^n$; $\LG(n,2n)$ denotes the Lagrangian Grassmannian of maximal isotropic subspaces with respect to the standard symplectic form; $\OG(n,2n)$ and $\OG(n,2n+1)$ denote (one of the two isomorphic connected components of) the orthogonal Grassmannians of maximal isotropic subspaces with respect to the standard quadratic form; $\OP^2=\LGE^\SC_6/P_6$ denotes the Cayley plane which is a homogeneous space for $\LGE^\SC_6$, the simply-connected Lie group of type $\LGE_6$; and finally $\LGE^\SC_7/\P_7$ is called the Freudenthal variety and is homogeneous for $\LGE^\SC_7$, the simply-connected Lie group of type $\LGE_7$.  Note that the two varieties that are only minuscule are redundant: the type-$B_n$ minuscule variety $\OG(n,2n+1)$ is isomorphic to the variety $\OG(n+1,2n+2)$ which is both minuscule and cominuscule as a type-$D_{n+1}$ homogeneous space; similarly, the type-$C_n$ minuscule variety $\CP^{2n-1}$ is of course the same as $\Gr(1,2n)$, which is both minuscule and cominuscule as a type-$A_{2n-1}$ homogeneous space. Adapted from \cite{CMP_Quantum_cohomology_of_minuscule_homogeneous_spaces}.}
\label{tab:CominusculeSpaces}
\end{table}

Thus, assuming that $\X=\G/\P_k$ is cominuscule means that $\dfwt[k]$ is minuscule, which in turn implies that the fundamental weight representation $\dfwtrep[k]$ is minuscule; that is, the Weyl group acts transitively on the weight spaces of $\dfwtrep[k]$. Here the simple reflection $s_i\in\weyl$ acts on a vector $\wtvmu$ of weight $\wt$ by mapping it to the vector $\bs_i\cdot\wtvmu$ of weight $s_i(\wt)$.

Recall that the fundamental weight representation $\dfwtrep[k]$ is the highest weight representation of $\dg$ with $\dfwt[k]$ as highest weight. For any choice of a highest-weight vector $\hwt[k]$, we obtain the representation as $\dfwtrep[k] = \dUEAm\cdot\C\hwt[k]$, where $\dUEAm$ is the universal enveloping algebra of $\dum$ (see for example \cite{Humphreys_Lie_Algebras}, Theorem 20.2). Thus, $\{\dChf_{i_1}\cdots \dChf_{i_j}\cdot\hwt[k]~|~j\ge0\}$ spans $\dfwtrep[k]$. We want to compare the actions of $\dUEAm$ and $\weyl$, so we will need some results on the structure of minuscule representations.
\begin{rem}
To be able to apply the results directly to our case, we will change the notation of the following theorem to conform to our situation; for example, we write $\dg$ for a general Lie algebra, as we want to apply the theorem to the minuscule highest weight representation of the Lie algebra $\dg$ of the (adjoint) Langlands dual group $\dG$ associated to the (simply-connected) group $\G$ such that $\X=\G/\P_k$.
\end{rem}
We denote the Cartan integers of a root system $\droots$ by $a_{ij}\in\Z$, $i,j\in\{1,\ldots,n\}$. They are given by $a_{ij} = 2\frac{(\sdr_j,\sdr_i)}{(\sdr_i,\sdr_i)}$ for any choice of non-degenerate, symmetric, bilinear form $(\cdot,\cdot)$ (e.g.~the Killing form). We also use the notation $c(\wt,\sdr_i)= 2\frac{(\wt,\sdr_i)}{(\sdr_i,\sdr_i)}$ for $\wt$ a general weight. 

\begin{thm}[Green \cite{Green_Reps_from_polytopes}]\label{thm:Green_structure_minuscule_reps}
Let $\dg$ be a simple Lie algebra and fix a set of simple roots $\sdroots=\{\sdr_1,\ldots,\sdr_n\}$ and Chevalley generators $(\dChe_i,\dChf_i,\dChh_i)$. Suppose $\minrep$ is a minuscule representation of $\dg$ with highest weight $\minwt$. Denote by $\minrepwts$ the weights of $\minrep$ and let $\wt\in\minrepwts$ be an arbitrary weight. The following statements hold:
\begin{romanize}
\item $c(\wt,\sdr_i)\in\{-1,0,1\}$, and $\wt-c\sdr_i\in\minrepwts$ if and only if $c=c(\wt,\sdr_i)$.
\item Each of the weight spaces is one-dimensional.
\item Given a highest weight vector $\wtv{\minwt}^+$, we can find a basis $\{\wtvmu~|~\wt\in\minrepwts\}$ with the following properties: $\wtvmu$ has weight $\wt$ and the basis vector of the highest weight $\wtv{\minwt}$ coincides with $\wtv{\minwt}^+$; the Chevalley generators act on $\wtvmu$ as
\ali{
\dChe_i\cdot\wtvmu &= \left\{ \begin{array}{ll} \wtv{\wt+\sdr_i},&\text{if $c(\wt,\sdr_i)=-1$,} \\ 0,&\text{otherwise,} \end{array} \right. \\
\dChf_i\cdot\wtvmu &= \left\{ \begin{array}{ll} \wtv{\wt-\sdr_i},&\text{if $c(\wt,\sdr_i)=+1$,} \\ 0,&\text{otherwise,} \end{array} \right. 
}
and $\dChh_i\cdot\wtvmu = c(\wt,\sr_i)\,\wtvmu$.
\item For any $v\in\minrep$ and any $i\in\{1,\ldots,n\}$, we have $(\dChe_i)^2\cdot v = 0$ and $(\dChf_i)^2\cdot v=0$.
Moreover, if $j\in\{1,\ldots,n\}$ is such that the Cartan integer $a_{ij} =-1$ (i.e.~when in the Dynkin diagram we have 
\begin{tikzpicture}[baseline = -0.5ex, scale = 0.625]
\coordinate (i) at (0,0);
\coordinate (j) at (0.75,0);
\draw[thick] (i)--(j);
\draw[black,fill=black] (i) circle (3pt); 
\draw[black,fill=black] (j) circle (3pt);
\node at (i) [left = 3pt] {$i$};
\node at (j) [right = 3pt] {$j$};
\end{tikzpicture}
or 
\begin{tikzpicture}[baseline = -0.5ex, scale = 0.625]
\coordinate (i) at (0,0);
\coordinate (j) at (0.8,0);
\coordinate (mw) at (0.4,0);
\draw[thick,double,double distance = 1pt] (i)--(j);
\draw[black,fill=black] (i) circle (3pt); 
\draw[black,fill=black] (j) circle (3pt);
\node at (mw) {$\boldsymbol{>}$};
\node at (i) [left = 3pt] {$i$};
\node at (j) [right = 3pt] {$j$};
\end{tikzpicture}
\hspace{-1ex})\footnote{Strictly speaking, we also have $a_{ij} = -1$ when we have 
\begin{tikzpicture}[baseline = -0.75ex, scale = 0.5]
\coordinate (i) at (0,0);
\coordinate (j) at (0.75,0);
\draw[black,fill=black] (i) circle (3pt); 
\draw[black,fill=black] (j) circle (3pt);
\draw (i)--(j) node[midway] {$>$};
\draw ([yshift = -2pt]i)--([yshift = -2pt]j);
\draw ([yshift = +2pt]i)--([yshift = +2pt]j);
\node at (i) [left = 3pt] {$i$};
\node at (j) [right = 3pt] {$j$};
\end{tikzpicture}
\hspace{-1ex}, but these edges only appear in the Dynkin diagram of type $\LGG_2$, and the corresponding Lie algebra does not have any minuscule representations.} we have both $\dChe_i\dChe_j\dChe_i\cdot v=0$ and $\dChf_i\dChf_j\dChf_i\cdot v=0$. Finally, if $a_{ij}<0$ we have $\dChe_i\dChf_j\cdot v = 0 = \dChf_j\dChe_i\cdot v$.
\end{romanize}
\end{thm}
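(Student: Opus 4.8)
The plan is to reduce every assertion to two structural features of a minuscule representation: that all its weights form a single Weyl orbit, $\minrepwts=\weyl\cdot\minwt$, and that consequently every $\mathfrak{sl}_2$-string in a simple-root direction has length at most two. The orbit statement is the defining feature of a minuscule weight (the only dominant weight $\le\minwt$ is $\minwt$ itself), so I would invoke it at the outset. For (i), writing $\wt=w\cdot\minwt$ and using $\weyl$-invariance of the pairing gives $c(\wt,\sdr_i)=c(\minwt,w^{-1}\sdr_i)$; since $\minwt$ is minuscule and $w^{-1}\sdr_i$ is again a root of $\dg$, this integer lies in $\{-1,0,1\}$. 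The membership claim then follows from standard $\mathfrak{sl}_2$-theory: if the string through $\wt$ in the direction $\sdr_i$ runs from $\wt+q\sdr_i$ down to $\wt-p\sdr_i$ with $p-q=c(\wt,\sdr_i)$, then all the pairing values $p+q,\,p+q-2,\,\ldots,\,-(p+q)$ must lie in $\{-1,0,1\}$, forcing $p+q\le 1$; hence $\wt-\sdr_i\in\minrepwts$ precisely when $c(\wt,\sdr_i)=1$ and $\wt+\sdr_i\in\minrepwts$ precisely when $c(\wt,\sdr_i)=-1$.

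For (ii), I would use that each $\bs_i$ normalizes $\dtorus$ and therefore carries the weight space of $\wt$ isomorphically onto that of $s_i\wt$; since $\weyl$ acts transitively on $\minrepwts$ and the highest weight space is one-dimensional, every weight space is one-dimensional.

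Statement (iv) I would establish \emph{before} fixing the normalization, since it needs only the support of the action determined in (i), not the precise coefficients of (iii). Each case is a short pairing computation using $c(\sdr_i,\sdr_j)=a_{ji}$ and $c(\sdr_i,\sdr_i)=2$: for $(\dChe_i)^2$, a nonzero $\dChe_i\cdot\wtv{\wt}$ forces $c(\wt,\sdr_i)=-1$, after which $c(\wt+\sdr_i,\sdr_i)=c(\wt,\sdr_i)+2=1\neq-1$ annihilates the second application (and dually for $\dChf_i$); for the length-three relation with $a_{ij}=-1$, tracking $\wt\to\wt+\sdr_i\to\wt+\sdr_i+\sdr_j$ yields $c(\wt+\sdr_i+\sdr_j,\sdr_i)=c(\wt,\sdr_i)+2+a_{ij}=-1+2-1=0\neq-1$, so the third $\dChe_i$ kills the vector (and dually with $\dChf$); and for $a_{ij}<0$ with $i\neq j$, the Chevalley relation gives $[\dChe_i,\dChf_j]=0$, so it suffices to annihilate $\dChf_j\dChe_i\cdot\wtv{\wt}$, where $c(\wt+\sdr_i,\sdr_j)=c(\wt,\sdr_j)+a_{ji}\le 1+(-1)=0<1$ prevents $\dChf_j$ from acting.

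The genuine obstacle is (iii): producing a \emph{single} basis for which all raising and lowering coefficients are exactly $1$. On a length-two string $\{\wtv{\wt},\wtv{\wt-\sdr_i}\}$ we have $c(\wt,\sdr_i)=1$, hence $\dChe_i\cdot\wtv{\wt}=0$ and the relation $\dChe_i\dChf_i-\dChf_i\dChe_i=\dChh_i$ gives $\dChe_i\dChf_i\cdot\wtv{\wt}=c(\wt,\sdr_i)\wtv{\wt}=\wtv{\wt}$, so the two structure constants on the string are mutually inverse and one may rescale $\wtv{\wt-\sdr_i}$ to make both equal to $1$. The difficulty is that these local rescalings must be performed simultaneously and coherently across the entire weight graph. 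I would resolve this by constructing the basis from $\hwt$ along lowering paths and verifying that the coefficient around any elementary square $\wt\to\wt-\sdr_i\to\wt-\sdr_i-\sdr_j$ equals that around $\wt\to\wt-\sdr_j\to\wt-\sdr_i-\sdr_j$ — which is exactly the commutation $\dChe_i\dChf_j=\dChf_j\dChe_i$ together with the vanishing relations proved in (iv) — and then using that for a minuscule weight the weight graph is the Hasse-type graph of a distributive lattice, so that every cycle is generated by such squares. This loop-consistency is the crux; once it is in place the normalization is well-defined, the action formulas of (iii) hold, and the theorem is complete.
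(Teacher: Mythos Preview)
The paper does not give a proof of this theorem; it is quoted from \cite{Green_Reps_from_polytopes} and used as input, so there is no argument in the paper to compare against. Your sketch must therefore stand on its own.

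Parts (i), (ii) and (iv) are argued correctly and along the standard lines. One small sharpening for your square computation in (iii): the relation you actually need around a diamond $\{\wt,\wt-\sdr_i,\wt-\sdr_j,\wt-\sdr_i-\sdr_j\}$ is $\dChf_i\dChf_j=\dChf_j\dChf_i$, not $\dChe_i\dChf_j=\dChf_j\dChe_i$. This is available because such a diamond can exist in $\minrepwts$ only when $a_{ij}=0$: if $a_{ij}<0$ then $c(\wt-\sdr_i,\sdr_j)=c(\wt,\sdr_j)-a_{ji}\ge 2$, contradicting (i). Hence $[\dChf_i,\dChf_j]=0$ on the nose and the two lowering paths around the square agree.

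The only place where real content is deferred is exactly the one you flag: the claim that every closed loop in the weight graph is generated by such commuting squares. This is true but not formal; it is the combinatorial core of the result. In the language closest to the present paper it is the statement that every element of $\cosets$ is \emph{fully commutative} (Stembridge; cf.\ Lemma~\ref{lem:wP_and_wPrime_are_fully_commutative}): saturated lowering chains from $\minwt$ to $\wt$ correspond to reduced words for the unique $w\in\cosets$ with $w(\minwt)=\wt$, and full commutativity says any two such reduced words differ by commutations $s_is_j\leftrightarrow s_js_i$ with $a_{ij}=0$, for which $\dChf_i\dChf_j=\dChf_j\dChf_i$. Your phrasing via the distributive-lattice structure of the minuscule poset (Proctor) is equivalent. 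Either way this is a theorem in its own right and should be cited rather than asserted; Green's original argument proceeds instead via an explicit heap/polytope model for $\minrep$, which is a genuinely different construction of the coherent basis.
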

The following corollary is obtained directly by applying Theorem \ref{thm:Green_structure_minuscule_reps} to the definition of $\ds_i$ and $\bs_i$ given in equation \eqref{eq:def_ds_and_bs}.
\begin{cor}\label{cor:Action_of_s_e_f}
With the assumptions of Theorem \ref{thm:Green_structure_minuscule_reps}, write $c=c(\wt,\sdr_i)$, then we have $s_i(\wt) = \wt-c\sdr_i$ and
\ali{
\ds_i\cdot\wtvmu &= \left\{\begin{array}{ll}
\wtv{\wt+\sdr_i} = \dChe_i\cdot\wtvmu, & \text{if $c=-1$,}\\
\wtvmu, & \text{if $c=\hphantom{+}0$},\\
-\wtv{\wt-\sdr_i} = -\dChf_i\cdot\wtvmu, & \text{if $c=+1$,}
\end{array}\right. \\
\bs_i\cdot\wtvmu &= \left\{\begin{array}{ll}
-\wtv{\wt+\sdr_i} = -\dChe_i\cdot\wtvmu, & \text{if $c=-1$,}\\
\wtvmu, & \text{if $c=\hphantom{+}0$,}\\
\wtv{\wt-\sdr_i} = \dChf_i\cdot\wtvmu, & \text{if $c=+1$.}
\end{array}\right.
\intertext{Conversely, we have}
\dChe_i\cdot\wtvmu &= \left\{\begin{array}{ll}
\wtv{\wt+\sdr_i} = \ds_i\cdot\wtvmu = -\bs_i\cdot\wtvmu,  & \text{if $c=-1$,}\\
0, & \text{otherwise,}
\end{array}\right. \\
\dChf_i\cdot\wtvmu &= \left\{\begin{array}{ll}
\wtv{\wt-\sdr_i} = \bs_i\cdot\wtvmu = -\ds_i\cdot\wtvmu,  & \text{if $c=+1$,}\\
0, & \text{otherwise.}
\end{array}\right.
}
\end{cor}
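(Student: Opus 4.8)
The plan is to read everything off from the definitions $\ds_i=\dx_i(1)\dy_i(-1)\dx_i(1)$ and $\bs_i=\dx_i(-1)\dy_i(1)\dx_i(-1)$ in \eqref{eq:def_ds_and_bs}, exploiting the nilpotency supplied by Theorem \ref{thm:Green_structure_minuscule_reps}(iv). First I would note that since $(\dChe_i)^2$ and $(\dChf_i)^2$ annihilate every vector of $\minrep$, the defining exponentials truncate: on $\minrep$ we have $\dx_i(a)=1+a\dChe_i$ and $\dy_i(a)=1+a\dChf_i$. Hence $\ds_i$ acts as the operator $(1+\dChe_i)(1-\dChf_i)(1+\dChe_i)$ and $\bs_i$ as $(1-\dChe_i)(1+\dChf_i)(1-\dChe_i)$. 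In particular the formulas for $\bs_i$ are those for $\ds_i$ with $\dChe_i$ and $\dChf_i$ negated, so it suffices to treat $\ds_i$ carefully and recover $\bs_i$ by this sign change. The identity $s_i(\wt)=\wt-c\sdr_i$ is then just the standard formula for the simple reflection $s_i$ acting on $\wt$, since by definition $c=c(\wt,\sdr_i)=2(\wt,\sdr_i)/(\sdr_i,\sdr_i)$ is the pairing of $\wt$ with the coroot of $\sdr_i$.

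Next I would run the case analysis on $c=c(\wt,\sdr_i)\in\{-1,0,1\}$, which is exactly the range guaranteed by Theorem \ref{thm:Green_structure_minuscule_reps}(i). The only data needed are the actions of $\dChe_i$ and $\dChf_i$ on $\wtvmu$ and on its neighbours, all provided by part (iii): $\dChe_i\cdot\wtvmu=\wtv{\wt+\sdr_i}$ precisely when $c=-1$ and $\dChf_i\cdot\wtvmu=\wtv{\wt-\sdr_i}$ precisely when $c=+1$, both being zero otherwise. I would also record that $c(\wt\pm\sdr_i,\sdr_i)=c\pm2$, so that, for instance, when $c=-1$ the neighbour $\wt+\sdr_i$ has Cartan integer $+1$, whence $\dChf_i\cdot\wtv{\wt+\sdr_i}=\wtvmu$ while $\dChe_i\cdot\wtv{\wt+\sdr_i}=0$. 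Feeding this into $(1+\dChe_i)(1-\dChf_i)(1+\dChe_i)\cdot\wtvmu$ collapses the telescoping product to $\wtv{\wt+\sdr_i}$ when $c=-1$, to $\wtvmu$ when $c=0$, and to $-\wtv{\wt-\sdr_i}$ when $c=+1$; negating the generators then yields the stated values of $\bs_i$.

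Finally, the converse formulas are obtained by simply inverting these three cases: $\dChe_i\cdot\wtvmu$ is nonzero only when $c=-1$, where it equals $\ds_i\cdot\wtvmu=-\bs_i\cdot\wtvmu$, and $\dChf_i\cdot\wtvmu$ is nonzero only when $c=+1$, where it equals $\bs_i\cdot\wtvmu=-\ds_i\cdot\wtvmu$. There is no genuine obstacle here beyond bookkeeping; the one point requiring care is tracking the Cartan integer $c\pm2$ of the shifted weights $\wt\pm\sdr_i$ so that the middle and outer factors of the threefold product act correctly, together with keeping the signs consistent across the product.
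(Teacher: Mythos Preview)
Your proposal is correct and is precisely the direct computation the paper has in mind: the paper does not spell out a proof but simply states that the corollary ``is obtained directly by applying Theorem \ref{thm:Green_structure_minuscule_reps} to the definition of $\ds_i$ and $\bs_i$ given in equation \eqref{eq:def_ds_and_bs}.'' Your use of the nilpotency from part (iv) to truncate the exponentials, followed by the case analysis on $c\in\{-1,0,1\}$ using part (iii) and the shift $c(\wt\pm\sdr_i,\sdr_i)=c\pm2$, is exactly how one carries this out.
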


We now return to the case where $\dg$ is the Lie algebra of the (adjoint) Langlands dual group $\dG$ of the (simply-connected) Lie group $\G$ such that $\X=\G/\P_k$ is cominuscule. Thus, the highest weight representation $\dfwtrep[k]$ of $\dg$ is minuscule and we can apply Corollary \ref{cor:Action_of_s_e_f} to $\dfwtrep[k]$ to obtain the following facts regarding the action of the Weyl group $W$ on the highest weight vector $\hwt[k]$:
\begin{lem}\label{lem:action_of_weyl_elts}
Consider the highest weight vector $\hwt[k]$ of the minuscule fundamental weight representation $\dfwtrep[k]$, where $k$ is such that $\X=\G/\P_k$.
\begin{romanize}
\item Given an arbitrary $w\in W$ with minimal coset representative $\wrep\in\cosets$, then we have $\bw\cdot\hwt[k]=\bwrep\cdot\hwt[k]$ and $\dw\cdot\hwt[k]=\dwrep\cdot\hwt[k]$.
\item An element $\wrep\in\cosets$ with reduced expression $\wrep=s_{i_1}\cdots s_{i_c}$ acts on the vector $\hwt[k]$ by $\bwrep\cdot\hwt[k] = \dChf_{i_1}\cdots\dChf_{i_c}\cdot\hwt[k]$ and by $\dwrep\cdot\hwt[k] = (-1)^c\dChf_{i_1}\cdots\dChf_{i_c}\cdot\hwt[k]$. 
\item Conversely, if $\dChf_{i_1}\cdots\dChf_{i_j}\cdot\hwt[k]$ is non-zero of weight $\wt$, then $s_{i_1}\cdots s_{i_j}$ is a reduced expression for the (unique) element $w_c\in\cosets$ such that $w_c\cdot\dfwt[k]=\wt$.
\item In particular, $\bwop,\dwop$ and their inverses act trivially on $\hwt[k]$ and we have that the lowest weight vector defined by $\lwt[k]=\bwo\cdot\hwt[k]$ satisfies $\lwt[k]=\bwP\cdot\hwt[k]=\dChf_{r_1}\cdots\dChf_{r_\ellwP}\cdot\hwt[k]$, where $\wP=s_{r_1}\cdots s_{r_\ellwP}$ is the reduced expression fixed in equation \eqref{eq:Fixed_reduced_expression_for_wP_and_wop}. Moreover, if $\dChf_{i_1}\cdots\dChf_{i_j}\cdot\hwt[k] = \lwt[k]$, then $s_{i_1}\cdots s_{i_j} = \wP$ and this is a reduced expression.
\end{romanize}
\end{lem}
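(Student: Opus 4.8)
The plan is to deduce all four parts from Corollary~\ref{cor:Action_of_s_e_f} together with standard facts about minimal coset representatives in $\cosets=\weyl/\weylp$. The conceptual core is that, on the minuscule representation $\dfwtrep[k]$, the action of $\weyl$ factors through $\cosets$ when applied to $\hwt[k]$, because the stabilizer of $\dfwt[k]$ in $\weyl$ is exactly $\weylp=\lan s_i~|~i\neq k\ran$.

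First I would prove part (i). The stabilizer of the highest weight $\dfwt[k]$ under the $\weyl$-action is generated by those $s_i$ with $c(\dfwt[k],\sdr_i)=0$; since $\dfwt[k]$ is the $k$-th fundamental weight, $c(\dfwt[k],\sdr_i)=\de_{ik}$, so the stabilizer is precisely $\weylp$. By Corollary~\ref{cor:Action_of_s_e_f}, whenever $c(\wt,\sdr_i)=0$ we have $\bs_i\cdot\wtvmu=\wtvmu=\ds_i\cdot\wtvmu$; in particular each generator $s_i$ of $\weylp$ fixes $\hwt[k]$ on the nose (for both the $\ds$- and $\bs$-lifts), and hence so does any product, giving $\bwop\cdot\hwt[k]=\hwt[k]=\dwop\cdot\hwt[k]$. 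Writing an arbitrary $w=\wrep w_P$ with $\wrep\in\cosets$ and $w_P\in\weylp$ in a length-additive way, the lifts satisfy $\bw=\bwrep\,\bw_P$ and $\dw=\dwrep\,\dw_P$; applying these to $\hwt[k]$ and using that $\bw_P,\dw_P$ fix $\hwt[k]$ yields $\bw\cdot\hwt[k]=\bwrep\cdot\hwt[k]$ and $\dw\cdot\hwt[k]=\dwrep\cdot\hwt[k]$, which is (i). Part~(iv)'s first assertion about $\bwop,\dwop$ and their inverses acting trivially is the special case $\wrep=\mathrm{id}$ of this argument.

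Next I would establish (ii) by induction on the length $c$ of a reduced expression $\wrep=s_{i_1}\cdots s_{i_c}\in\cosets$. The key point is that, reading the reduced word from the right, each successive partial product $s_{i_{j}}\cdots s_{i_c}$ is again a minimal coset representative, so the weight it produces strictly decreases and never vanishes. Concretely, set $\wt^{(c)}=\dfwt[k]$ and $\wt^{(j-1)}=s_{i_{j-1}}(\wt^{(j)})$; because the word is reduced and minimal in its coset, one checks that $c(\wt^{(j)},\sdr_{i_{j-1}})=+1$ at each step, so Corollary~\ref{cor:Action_of_s_e_f} gives $\bs_{i_{j-1}}\cdot v_{\wt^{(j)}}=v_{\wt^{(j)}-\sdr_{i_{j-1}}}=\dChf_{i_{j-1}}\cdot v_{\wt^{(j)}}$ and $\ds_{i_{j-1}}\cdot v_{\wt^{(j)}}=-\dChf_{i_{j-1}}\cdot v_{\wt^{(j)}}$. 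Composing these $c$ steps yields $\bwrep\cdot\hwt[k]=\dChf_{i_1}\cdots\dChf_{i_c}\cdot\hwt[k]$ and $\dwrep\cdot\hwt[k]=(-1)^c\dChf_{i_1}\cdots\dChf_{i_c}\cdot\hwt[k]$. For (iii), I would run the implication in reverse: if $\dChf_{i_1}\cdots\dChf_{i_j}\cdot\hwt[k]\neq0$, then at each stage the lowering operator acts nontrivially, so by Corollary~\ref{cor:Action_of_s_e_f} the relevant Cartan integer is $+1$ and $\dChf$ agrees with $\bs$ up to the (irrelevant) factor; thus $s_{i_1}\cdots s_{i_j}\cdot\dfwt[k]=\wt$ and, because weight spaces of a minuscule representation are one-dimensional (Theorem~\ref{thm:Green_structure_minuscule_reps}(ii)) and $\weyl$ acts transitively on weights, the product must equal the unique $w_c\in\cosets$ with $w_c\cdot\dfwt[k]=\wt$, and the expression is automatically reduced (any shortening would change the weight). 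Finally, (iv) follows by taking $\wt$ to be the lowest weight: $\lwt[k]=\bwo\cdot\hwt[k]=\bwP\cdot\hwt[k]$ by (i) since $\wP\in\cosets$ represents $\wo\weylp$, and then (ii) applied to the fixed reduced expression $\wP=s_{r_1}\cdots s_{r_\ellwP}$ gives the $\dChf$-formula; the converse statement that $\dChf_{i_1}\cdots\dChf_{i_j}\cdot\hwt[k]=\lwt[k]$ forces $s_{i_1}\cdots s_{i_j}=\wP$ is the $\wt=\wo(\dfwt[k])$ case of (iii).

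The main obstacle I anticipate is the bookkeeping in (ii)--(iii) showing that the partial products of a \emph{minimal} coset representative stay nonzero on $\hwt[k]$ and that the Cartan integers come out to $+1$ at each stage. This is where minimality of the coset representative is essential: an arbitrary reduced word in $\weyl$ could insert reflections from $\weylp$ that stabilize the weight (giving $c=0$ and hence no motion), and one must rule this out. The cleanest route is to use the correspondence between the orbit $\weyl\cdot\dfwt[k]=\minrepwts$ and $\cosets$ (a bijection, since $\weylp$ is the stabilizer) to translate "the reduced word is minimal in its coset" into "each step strictly lowers the weight," which by part~(i) of Theorem~\ref{thm:Green_structure_minuscule_reps} forces $c(\wt^{(j)},\sdr_{i_{j-1}})=+1$ rather than $0$ or $-1$. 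Once that bijection is in hand the rest is a routine composition of the sign-tracking formulas from Corollary~\ref{cor:Action_of_s_e_f}.
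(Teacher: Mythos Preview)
Your proposal is correct and follows the same overall architecture as the paper's proof: parts (i) and (iv) are handled identically (stabilizer of $\dfwt[k]$ is $\weylp$, apply Corollary~\ref{cor:Action_of_s_e_f}), and part (iii) in both cases reduces to a weight-depth count showing $j=\ell(w_c)$.

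The one genuine difference is in part (ii), specifically in how you rule out the possibility that some $\bs_{i_{j-1}}$ acts as $-\dChe_{i_{j-1}}$ (i.e.\ $c=-1$). The paper does this by a direct representation-theoretic argument using Theorem~\ref{thm:Green_structure_minuscule_reps}(iv): it takes the rightmost such factor, looks at the adjacent $\dChf_j$, and uses the relations $\dChe_i\dChf_j\cdot v=0$ (when $a_{ij}<0$) together with commutativity (when $a_{ij}=0$) to derive a contradiction with minimality. Your route is instead Weyl-combinatorial: suffixes of a reduced word for $\wrep\in\cosets$ remain in $\cosets$, the reducedness of the full word forces $(s_{i_j}\cdots s_{i_c})^{-1}(\sdr_{i_{j-1}})$ to be a positive (co)root, and pairing with the dominant weight $\dfwt[k]$ then gives $c\ge 0$; the bijection $\cosets\leftrightarrow\minrepwts$ rules out $c=0$. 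Both arguments are valid. Yours is cleaner structurally and does not need the special identity $\dChe_i\dChf_j\cdot v=0$, but it imports several standard facts about $\cosets$ that you should state explicitly (in particular that suffixes of reduced words for elements of $\cosets$ lie in $\cosets$). The paper's argument is more self-contained within the minuscule toolkit already assembled. In (iii) your parenthetical ``any shortening would change the weight'' is correct but terse; making the height-count $\dfwt[k]-\wt=\sum_{l}\sdr_{i_l}$ explicit, as the paper does, closes that step without ambiguity.
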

\begin{proof} \textbf{(i)} Recall the identity $s_j(\dfwt[k]) = \dfwt[k]$ for $j\neq k$, i.e.~when $s_j\in\weylp$. This implies that an arbitrary $w\in\weyl$ acts on $\dfwt[k]$ by its minimal coset representative $\wrep$ in $\cosets$. Corollary \ref{cor:Action_of_s_e_f} implies that $\bw\cdot\hwt[k]=\bwrep\cdot\hwt[k]$ and $\dw\cdot\hwt[k]=\dwrep\cdot\hwt[k]$.

\textbf{(ii)} We need to show that each factor of $\bwrep=\bs_{i_1}\cdots\bs_{r_c}$ acts as $\dChf_i$ on $\hwt[k]$. Considering Corollary \ref{cor:Action_of_s_e_f}, each $\bs_i$ acts either by $\dChf_i$, $-\dChe_i$ or as the identity map. Clearly, none of these factors acts as the identity, as we could remove it from the product, which would contradict the minimality of the coset representative $\dwrep$. Moreover, none of the factors acts as $-\dChe_i$ either, because of the following argument:

Let $\bs_i$ be the right-most factor acting as $-\dChe_i$. As $\hwt[k]$ is the highest weight vector, we have $\dChe_i\cdot\hwt[k]=0$ (as $\dChe_i$ raises the height of the weight), so there must be a number of $\bs_j$ in between $\bs_i$ and $\hwt[k]$ acting as $\dChf_j$. Let $\dChf_j$ be the factor next to $\dChe_i$. There are three cases: $j\neq i$ and $a_{ij}<0$; $j\neq i$ and $a_{ij}=0$; and $j=i$. When $j\neq i$ and $a_{ij}<0$, Theorem \ref{thm:Green_structure_minuscule_reps} \textbf{(iv)} tells us that $\dChe_i\dChf_j\cdot v=0$ in the representation, which is impossible. When $j\neq i$ and $a_{ij}=0$, we know that $\dChe_i$ and $\dChf_j$ commute in the Lie algebra, so we can assume without loss of generality that only the case $j=i$ occurs. In the case $j=i$ we obtain $\dChe_i\dChf_j\cdot v=v$ according to Theorem \ref{thm:Green_structure_minuscule_reps} \textbf{(iii)}. However, this is in contradiction with the fact that $\wrep$ is a minimal coset representative. Thus, all of the factors $\bs_i$ of $\bwP$ act as $\dChf_i$.

For the equality $\dwrep=(-1)^c\dChf_{i_1}\cdots\dChf_{i_c}\cdot\hwt[k]$, we use an analogous argument combined with the fact that $\ds_i$ acts as either $\dChe_i$, $-\dChf_i$ or the identity due to Corollary \ref{cor:Action_of_s_e_f}.

\textbf{(iii)} Let $w=s_{i_1}\cdots s_{i_j}$ and let $w_c\in\cosets$ denote the minimal length representative of $w\weylp$. Given a reduced expression $w_c=s_{i'_1}\cdots s_{i'_c}$, parts \textbf{(i)} and \textbf{(ii)} imply that 
\[
\bw\cdot\hwt[k]=\bw_c\cdot\hwt[k]=\dChf_{i'_1}\cdots\dChf_{i'_c}\cdot\hwt[k].
\]
Thus, $\bw\cdot\hwt[k]$ has weight $\dfwt[k] - \sdr_{i'_1}-\cdots-\sdr_{i'_c}$ by Theorem \ref{thm:Green_structure_minuscule_reps} \textbf{(iii)}.


On the other hand, $\dChf_{i_1}\cdots\dChf_{i_j}\cdot\hwt[k] = \bs_{i_1}\cdots\bs_{i_j}\cdot\hwt[k]=\bw\cdot\hwt[k]$ by Corollary \ref{cor:Action_of_s_e_f}, since none of the factors act as the zero map. This has two implications. Firstly, $w=s_{i_1}\cdots s_{i_j}$ is a reduced expression: else, one of the factors of $\dChf_{i_1}\cdots\dChf_{i_j}$ must act as the identity map, which contradicts Theorem \ref{thm:Green_structure_minuscule_reps} \textbf{(iii)}. Secondly, the weight of $\bw\cdot\hwt[k]$ can also be written as $\dfwt[k]-\sdr_{i_1}-\ldots-\sdr_{i_j}$.

We conclude that 
\[
\dfwt[k] - \sdr_{i'_1}-\cdots-\sdr_{i'_c}=w(\dfwt[k])=\dfwt[k]-\sdr_{i_1}-\ldots-\sdr_{i_j}.
\]
Clearly, this can only hold when $c=\ell(w_c)=\ell(w)=j$.

Thus, we have $w = w_c\in\cosets$ and we have already shown that $s_{i_1}\cdots s_{i_j}$ is a reduced expression for $w$. Moreover, $w(\dfwt[k])=\wt$ by definition and this determines $w$ uniquely, proving \textbf{(iii)}.

\textbf{(iv)} As $\wop\in\weylp$, \textbf{(i)} implies that $\bwop$, $\dwop$ and their inverses acts trivially on $\hwt[k]$. As $\wP$ is defined as the minimal coset representative of $\wo$, we conclude that $\lwt[k]=\bwo\cdot\hwt[k]=\bwP\cdot\hwt[k]$ by \textbf{(i)}, and since $\wP=s_{r_1}\cdots s_{r_\ellwP}$ is the reduced expression we fixed in section \ref{sec:notation}, \textbf{(ii)} implies that $\lwt[k] = \bs_{r_1}\cdots\bs_{r_\ellwP}\cdot\hwt[k] = \dChf_{r_1}\cdots\dChf_{r_\ellwP}\cdot\hwt[k]$. Part \textbf{(iii)} implies the last statement directly.
\end{proof}

\section{Statement of the Laurent polynomial potential}\label{sec:LP_for_LG}
In this section we state the main result of this article, Theorem \ref{thm:Explicit_LP_LG_model}, which is an explicit Laurent polynomial expression for $\potZ$ on an open, dense algebraic torus $\opendecomps$ inside $\decomps$, whenever $\X=\G/\P$ is a \emph{cominuscule} homogeneous space. Recall that $\X=\G/\P$ is cominuscule when $\P=\P_k$ is maximal and $\dfwt[k]$ is minuscule.

First we define another subset of $\dunim$:
\begin{df}\label{df:opendunim}
Recall the reduced expression $\wP = s_{r_1}\cdots s_{r_\ellwP}$ fixed in equation \eqref{eq:Fixed_reduced_expression_for_wP_and_wop}. Let $\opendunim\subset\dunim$ be the algebraic torus of elements $u_-$ that can be written as
\begin{equation}
u_- = \dy_{r_\ellwP}(a_\ellwP) \cdots \dy_{r_1}(a_1)
\label{eq:Elements_of_Bruhat_cell_in_U-}
\end{equation}
with $a_i\in\C^*$. 
\end{df}
\begin{lem}\label{lem:open_dunim_inside_dunimP}
We have $\opendunim\subset\dunimP$ open and dense, where $\dunimP=\dunim\cap\dborelp\bwop\bwo\dborelp$ was defined in equation \eqref{eq:unique_decomps_unipotents}.
\end{lem}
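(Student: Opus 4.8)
The plan is to first reduce to a single Bruhat cell and establish the inclusion by an induction along the fixed reduced word, and then to obtain density and openness by identifying $\dunimP$ with an open Richardson variety and recognizing $\opendunim$ as a reduced-word torus chart inside it.

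\emph{Reduction and the inclusion.} By the Remark following Lemma \ref{lem:z_has_unique_decomposition} we may replace $\dunimP$ by $\dunim\cap\dborelp(\dwP)^{-1}\dborelp$, so everything takes place inside the single Bruhat cell of $(\wP)^{-1}$, with reduced expression $(\wP)^{-1}=s_{r_\ellwP}\cdots s_{r_1}$. To show $\opendunim\subseteq\dunimP$, fix $u_-=\dy_{r_\ellwP}(a_\ellwP)\cdots\dy_{r_1}(a_1)$ with all $a_i\in\C^*$. Projecting to the rank-one subgroup attached to $r_j$, the $SL_2$-identity shows $\dy_{r_j}(a_j)\in\dborelp\ds_{r_j}\dborelp$ whenever $a_j\neq0$ (the lower-left entry is $a_j$). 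Reading the product from the right and using the Bruhat multiplication rule $(\dborelp\ds_i\dborelp)(\dborelp\dot{v}\dborelp)\subseteq\dborelp\ds_i\dot{v}\dborelp$ when $\ell(s_iv)>\ell(v)$, together with the fact that each suffix $s_{r_j}\cdots s_{r_1}$ of the reduced word $(\wP)^{-1}$ is itself reduced, an induction on $j$ gives $u_-\in\dborelp(\dwP)^{-1}\dborelp$. Since $u_-\in\dunim$ as well, this yields $\opendunim\subseteq\dunimP$.

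\emph{Structure of $\dunimP$ and density.} Under the open immersion $\dunim\hookrightarrow\dG/\dborelp$, $u\mapsto u\dborelp$, whose image is the opposite big cell, the set $\dunimP$ is carried isomorphically onto the intersection of the Schubert cell of $(\wP)^{-1}$ with that big cell, i.e.\ onto an open Richardson variety; equivalently, fixing $t\in\invdtorus$ and using Lemma \ref{lem:z_has_unique_decomposition} identifies $\dunimP$ with $\mX=\dRichard_{\wop,\wo}$. Either description shows $\dunimP$ is irreducible and smooth of dimension $\ell((\wP)^{-1})=\ellwP$; the dimension can also be read off from $\decomps\cong\mX\times\invdtorus$ together with the uniqueness of the decomposition $z=u_+t\bwop u_-$. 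On the other hand, the reduced-word parametrization $(a_1,\dots,a_\ellwP)\mapsto\dy_{r_\ellwP}(a_\ellwP)\cdots\dy_{r_1}(a_1)$ is injective on $(\C^*)^\ellwP$ — one recovers the $a_i$ from matrix coefficients of $u_-$ on the minuscule representation $\dfwtrep[k]$ via Lemma \ref{lem:action_of_weyl_elts} — so $\opendunim\cong(\C^*)^\ellwP$ is irreducible of dimension $\ellwP$. An irreducible subvariety of the same dimension as the irreducible $\dunimP$ is automatically dense, which gives density.

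\emph{Openness.} The remaining and main obstacle is to see that $\opendunim$ is \emph{open} in $\dunimP$: density by itself only produces some dense open of $\dunimP$ inside $\opendunim$. The point is that the torus coordinates $a_i$ extend to regular functions on $\dunimP$. Concretely, each $a_i$ is a Laurent monomial in extremal matrix coefficients $\langle\xi,\,u_-\cdot\hwt[k]\rangle$ of $\dfwtrep[k]$ at suitable extremal weight vectors; by Lemma \ref{lem:action_of_weyl_elts} the coefficient along $\lwt[k]$ equals $\pm a_1\cdots a_\ellwP$, and the intermediate extremal coefficients recover the individual $a_i$. These are regular on all of $\dunimP$, and $\opendunim$ is precisely their common non-vanishing locus, hence open. (Equivalently, this is the reduced-word chart of the open Richardson variety in the sense of Marsh–Rietsch, so one may instead invoke their parametrization theorem to realize $\opendunim$ as an open subvariety of $\dunimP$.) Combining the three steps, $\opendunim\subseteq\dunimP$ is open and dense.
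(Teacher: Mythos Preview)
Your proof is correct and follows essentially the same approach as the paper. The inclusion argument via $\dy_i(a)\in\dborelp\ds_i\dborelp$ for $a\neq0$ together with Bruhat multiplication along the reduced word is exactly what the paper does (it cites the Bruhat lemma from Humphreys). For openness and density the paper is far terser: it simply asserts openness as ``clear'' and obtains density from the equality of dimensions plus irreducibility of $\dunimP$, citing Lusztig's \emph{Total Positivity} paper; your identification of $\dunimP$ with an open Richardson variety and your appeal to the Marsh--Rietsch reduced-word chart amount to unpacking that citation. One small caution: your claim that each $a_i$ is a Laurent monomial in \emph{extremal} matrix coefficients of $\dfwtrep[k]$ is not quite immediate in general (there can be several subexpressions hitting a given extremal weight), so the cleaner route is indeed to invoke the Deodhar/Marsh--Rietsch parametrization, as you note parenthetically.
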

\begin{proof}
(See also section 5.2 of \cite{Pech_Rietsch_Odd_Quadrics}.) Note that 
\[
\dborelp\ds_i\dborelp = \dborelp\dx_i(1)\dy_i(-1)\dx_i(1)\dborelp = \dborelp\dy_i(-1)\dborelp,
\]
so that $\dy_i(a)\in\dborelp\ds_i\dborelp$, as the $-1$ can be scaled to any $a\in\C^*$ since $\dtorus\subset\dborelp$. Of course, $s_{r_\ellwP}\cdots s_{r_1}=\wPinv$ is reduced, so the Bruhat lemma (see for example \cite{Humphreys_Linear_Algebraic_Groups}, Lemma 29.3.A) implies that
\[
\opendunim\subset\dborelp\ds_{r_\ellwP}\cdots\ds_{r_1}\dborelp = \dborelp\bwPinv\dborelp
\]
and it remains to show that $\dborelp\bwPinv\dborelp=\dborelp\bwop\bwo\dborelp$. This follows from the fact that $\wo=\wP\wop$ and the fact that $\ds_i$ and $\bs_i$ only differ by a torus element.

It is clear that $\opendunim\subset\dunimP$ is an open subset and it is dense as both have dimension $\ell(\wop\wo)$ and $\dunimP$ is irreducible \cite{Lusztig_Total_Positivity_in_Reductive_Groups}.
\end{proof}
\begin{df}\label{df:opendecomps}
We define the open, dense algebraic torus $\opendecomps\subset\decomps$ as
\[
\opendecomps = \dborelm\bwo^{-1}\cap\dunip\invdtorus\bwop\opendunim~ \subset~ \decomps.
\]
\end{df}
Note that the fact that this variety is non-zero and an algebraic torus follows from Lemma \ref{lem:z_has_unique_decomposition} and the resulting isomorphism $\decomps\to\dunimP\times\invdtorus:z\mapsto(u_-,t)$.

The following is immediate from the definition and Lemma \ref{lem:z_has_unique_decomposition}:
\begin{cor}\label{cor:opendecoms_unique_decomp}
Every $z\in\opendecomps$ can be factorized in two ways: as $b_-\bwo^{-1}$ for $b_-\in\dborelm$; and as $z=u_+t\bwop u_-$ with $u_-\in\opendunim$ of the form \eqref{eq:Elements_of_Bruhat_cell_in_U-}, $u_+\in\dunip$ and $t\in\invdtorus$. Moreover, the latter decomposition is \emph{unique} with $u_+$ determined by a choice of $(u_-,t)$.
\end{cor}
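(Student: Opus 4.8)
The final statement to prove is Corollary \ref{cor:opendecoms_unique_decomp}, which asserts that every $z\in\opendecomps$ admits two factorizations and that the second is unique.

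\medskip

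The plan is to deduce everything directly from the definition of $\opendecomps$ together with Lemma \ref{lem:z_has_unique_decomposition}, so this corollary is genuinely a corollary and requires almost no new work. First I would observe that $\opendecomps$ was defined in Definition \ref{df:opendecomps} as the intersection
\[
\opendecomps = \dborelm\bwo^{-1}\cap\dunip\invdtorus\bwop\opendunim,
\]
so by construction any $z\in\opendecomps$ lies in \emph{both} sets appearing in this intersection. Membership in the first set $\dborelm\bwo^{-1}$ is exactly the assertion that $z=b_-\bwo^{-1}$ for some $b_-\in\dborelm$, giving the first factorization immediately. Membership in the second set $\dunip\invdtorus\bwop\opendunim$ gives a factorization $z=u_+t\bwop u_-$ with $u_+\in\dunip$, $t\in\invdtorus$ and $u_-\in\opendunim$; since every element of $\opendunim$ is by Definition \ref{df:opendunim} of the form \eqref{eq:Elements_of_Bruhat_cell_in_U-}, the factor $u_-$ has the required shape. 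This establishes existence of both factorizations.

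\medskip

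For the uniqueness claim I would invoke Lemma \ref{lem:open_dunim_inside_dunimP}, which gives the inclusion $\opendunim\subset\dunimP$. Since $\opendecomps\subset\decomps$ and $u_-\in\opendunim\subset\dunimP$, the decomposition $z=u_+t\bwop u_-$ is precisely a decomposition of the type whose uniqueness is guaranteed by Lemma \ref{lem:z_has_unique_decomposition}: that lemma states that every $z\in\decomps$ has a \emph{unique} decomposition $z=u_+t\bwop u_-$ with $u_+\in\dunip$, $t\in\invdtorus$ and $u_-\in\dunimP$, and moreover that fixing the pair $(u_-,t)$ determines $u_+$. Applying this to our $z$ yields both the uniqueness of the factorization and the statement that $u_+$ is determined by the choice of $(u_-,t)$.

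\medskip

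There is essentially no obstacle here, as the corollary is a packaging of the definition with the two preceding lemmas; the only point requiring a moment's care is confirming that the element $u_-$ produced by membership in $\dunip\invdtorus\bwop\opendunim$ indeed lands in $\dunimP$ so that Lemma \ref{lem:z_has_unique_decomposition} applies — but this is exactly the content of Lemma \ref{lem:open_dunim_inside_dunimP}. I would therefore expect the proof to be a single short paragraph simply chaining these references together.
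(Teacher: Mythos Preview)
Your proposal is correct and matches the paper's approach exactly: the paper simply states that the corollary ``is immediate from the definition and Lemma \ref{lem:z_has_unique_decomposition}'' without writing out a proof, and your argument is precisely the natural unpacking of that sentence. The only thing you add beyond the paper is the explicit mention of Lemma \ref{lem:open_dunim_inside_dunimP} to justify $u_-\in\dunimP$, which is implicit in the paper's one-line justification.
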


We will define the Laurent polynomial expression for the potential on this algebraic torus $\opendecomps$. It turns out that this expression is indexed by the subexpressions of $\wPrime$ in $\wP$. Recall from equation \eqref{eq:Def_of_wPrime} that $\wPrime\in\weyl$ is defined by $\wP=\wPrime\wPPrime$, where $\wP,\wPPrime\in\cosets$ are the minimal coset representatives of $\wo\weylp$ and $\wop s_k\weylp$ (with $k$ such that $\P=\P_k$). Moreover, their lengths are denoted by $\ell(\wP)=\ellwP$, $\ell(\wPrime)=\ellwPrime$ and $\ell(\wPPrime)=\ellwPPrime$ and satisfy $\ellwP=\ellwPrime+\ellwPPrime$.
\begin{df}\label{df:wPrimeSubExp}
Let $\wPrimeSubExp$ be the set indexing reduced subexpressions for $\wPrime$ occurring inside the fixed reduced expression $\wP=s_{r_1}\cdots s_{r_\ellwP}$ of equation \eqref{eq:Fixed_reduced_expression_for_wP_and_wop}. In other words:
\[
\wPrimeSubExp = \bigl\{(i_1,\ldots,i_{\ellwPrime})~\big|~1\le i_1<i_2<\ldots<i_{\ellwPrime}\le\ellwP\text{ and }\wPrime=s_{r_{i_1}}\cdots s_{r_{i_{\ellwPrime}}}\bigr\}.
\]
\end{df}
\begin{rem}\label{rem:NoFixedRedExpForwPrime}
Note that the reduced expression $\wPrime = s_{r_{i_1}}\cdots s_{r_{i_{\ellwPrime}}}$ is \emph{not} fixed, i.e.~if $(i_j),(i'_j)\in\wPrimeSubExp$, then we do \emph{not} necessarily have $r_{i_j} = r_{i'_j}$ for all $j$.
\end{rem}
\begin{thm}[An explicit Laurent-polynomial Landau-Ginzburg model]\label{thm:Explicit_LP_LG_model}
Let $\X=\G/\P$ be a \emph{cominuscule} complete homogeneous space with $\G$ a \emph{simply-connected}, simple, complex algebraic group and $\P=\P_k$ a (maximal) parabolic subgroup. The~restriction $\potZo$ of $\potZ$ to $\opendecomps$ has the following Laurent polynomial expression:
\begin{equation}
\potZo(z) = \sum_{i=1}^\ellwP a_i + q\frac{\sum_{(i_j)\in\wPrimeSubExp} a_{i_1}\cdots a_{i_{\ellwPrime}}}{\prod_{i=1}^\ellwP a_i}.
\label{eq:Explicit_LP_expression_for_the_potential}
\end{equation}
Here $z\in\opendecomps$ is uniquely decomposed as $z=u_+t\bwop u_-$ with $u_-=\dy_{r_\ellwP}(a_\ellwP)\cdots\dy_{r_1}(a_1)\in\opendunim$ as in Corollary \ref{cor:opendecoms_unique_decomp}, and $q\in\C^*$ is given by $q=\sdr_k(t)$ (with $t\in\invdtorus$). Finally, the set $\wPrimeSubExp$ is given in Definition \ref{df:wPrimeSubExp}.
\end{thm}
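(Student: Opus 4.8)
The plan is to evaluate $\potZo(z)=\SumOfEs(u_+^{-1})+\SumOfFs(u_-)$ (Definition~\ref{df:potential}) on the decomposition $z=u_+t\bwop u_-$ supplied by Corollary~\ref{cor:opendecoms_unique_decomp}, treating the two summands separately. The linear summand is immediate: by construction each $\dChfdual$ reads off the coefficient of $\dChf_i$ in the linear part of an element of $\dunim\subset\dCUEAm$, so it is additive on products of one-parameter subgroups. Since $\dChfdual(\dy_{r_j}(a_j))=a_j\de_{i,r_j}$, summing over $i$ gives $\SumOfFs(\dy_{r_j}(a_j))=a_j$, and additivity over the $\ellwP$ factors of $u_-=\dy_{r_\ellwP}(a_\ellwP)\cdots\dy_{r_1}(a_1)$ yields $\SumOfFs(u_-)=\sum_{i=1}^\ellwP a_i$, the classical part of \eqref{eq:Explicit_LP_expression_for_the_potential}.

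The quantum summand $\SumOfEs(u_+^{-1})$ is the heart of the argument, and I would compute it inside the minuscule representation $\dfwtrep[k]$, where every structure constant is controlled by Theorem~\ref{thm:Green_structure_minuscule_reps} and Corollary~\ref{cor:Action_of_s_e_f}. The key point is that the second factorization $z=b_-\bwo^{-1}$ (Corollary~\ref{cor:opendecoms_unique_decomp}) pins down $u_+$: right-multiplying $u_+t\bwop u_-=b_-\bwo^{-1}$ by $\bwo$ gives $u_+\,g=b_-$ with $g:=t\bwop u_-\bwo$, so $g=u_+^{-1}b_-$ exhibits $u_+^{-1}$ as exactly the $\dunip$-factor of $g$ in the opposite big-cell decomposition $g\in\dunip\dborelm$. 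Thus $\SumOfEs(u_+^{-1})$ is to be extracted from matrix coefficients of $g$ acting on the weight basis $\{\wtvmu\}$ of $\dfwtrep[k]$.

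I would organize that extraction into two computations. First, the normalization: pairing against the extremal vectors $\hwt[k],\lwt[k]$ and using that $\bwop,\dwop$ act trivially on $\hwt[k]$ together with $\lwt[k]=\bwP\cdot\hwt[k]=\dChf_{r_1}\cdots\dChf_{r_\ellwP}\cdot\hwt[k]$ (Lemma~\ref{lem:action_of_weyl_elts}(iv)) isolates the torus scalar coming from $t$ and from the $\dborelm$-factor; this is where the quantum parameter $q=\sdr_k(t)$ and the denominator $\prod_{i=1}^\ellwP a_i$ appear, the latter because reaching the lowest weight forces the full reduced word and hence picks up each $a_i$ exactly once (again Lemma~\ref{lem:action_of_weyl_elts}(iv)). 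Second, the numerator: expanding $u_-=\prod_j\bigl(1+a_j\dChf_{r_j}\bigr)$ — valid since $(\dChf_{r_j})^2$ acts as $0$ by Theorem~\ref{thm:Green_structure_minuscule_reps}(iv) — turns the relevant matrix coefficient into a sum of monomials $a_{i_1}\cdots a_{i_{\ellwPrime}}$, and Lemma~\ref{lem:action_of_weyl_elts}(iii) declares a monomial nonzero precisely when the corresponding increasing index tuple spells a reduced expression for $\wPrime$; using $\wP=\wPrime\wPPrime$ with $\ellwP=\ellwPrime+\ellwPPrime$ to locate the intermediate weight $\wPPrime(\dfwt[k])$, this produces exactly $\sum_{(i_j)\in\wPrimeSubExp}a_{i_1}\cdots a_{i_{\ellwPrime}}$.

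The main obstacle I anticipate is the bookkeeping that glues these two computations together. Three points need care: (a) showing that the sum $\SumOfEs=\sum_{i=1}^n\dChedual$ over \emph{all} simple reflections organizes into the single subexpression sum governed by $\wPrime$, rather than contributing spurious terms — this relies on the minuscule fact that only $\sdr_k$ meets the top of $\dfwtrep[k]$ and on the precise shape of the factor $u_+^{-1}$ of $g$; (b) tracking the signs and the order-reversal introduced by the twisting elements $\bwop$ and $\bwo$ (note $\bwo=\bwP\bwop$, and $\bw$ reverses reduced words), so that the surviving subexpressions genuinely spell $\wPrime$ and not $\wPrime^{-1}$, which in general need not even lie in $\cosets$; and (c) reconciling the character $\sdr_k$ with the weight $\dfwt[k]$ on the one-dimensional torus $\invdtorus$ to obtain exactly the prefactor $q$. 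I expect (a) and (b) to be the genuinely delicate steps, and they are naturally isolated as the intermediate lemmas deferred to section~\ref{sec:Proof_of_Lemmas}.
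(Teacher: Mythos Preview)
Your proposal is correct and follows essentially the same route as the paper: the linear term is computed exactly as you describe, and the quantum term is extracted via matrix coefficients in $\dfwtrep[k]$ after writing $u_+=b_-\bwo^{-1}u_-^{-1}\bwop^{-1}t^{-1}$, with the steps you outline corresponding precisely to the paper's Lemmas~\ref{lem:F(u-)=sum_of_variables}--\ref{lem:numerator_of_e_star_k}. The vanishing you flag in~(a) is indeed the delicate point and is the paper's Lemma~\ref{lem:e_star_i_is_zero_for_i_unequal_to_k}, proved there by using $u_-^{-1}\in\dborelp\bwo^{-1}\bwop^{-1}\dborelp$ together with a weight argument showing that $\wop(\drt_+)=\sdr_i$ admits no solution with $\drt_+$ a nonnegative root sum when $i\neq k$.
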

The proof of this statement follows in sections \ref{sec:Proof_of_LP_for_LG} and \ref{sec:Proof_of_Lemmas}. In section \ref{sec:quiver_enumeration} we rewrite the summation over $\wPrimeSubExp$ into a summation over subsets of a quiver associated to $\wP$ by \cite{CMP_Quantum_cohomology_of_minuscule_homogeneous_spaces}, see Corollary \ref{cor:Explicit_LP_LG_model_with_quiver_subsets}. In section \ref{sec:Application} we apply Theorem \ref{thm:Explicit_LP_LG_model} and its reformulation as Corollary \ref{cor:Explicit_LP_LG_model_with_quiver_subsets} to all the cominuscule homogeneous spaces, leading to new Laurent polynomial potentials for the cominuscule homogeneous spaces of type $\LGD_n$, $\LGE_6$ and $\LGE_7$, see subsections \ref{subsec:OG}, \ref{subsec:Cayley} and \ref{subsec:Freudenthal}, respectively. We also work out an example for each of the families of cominuscule homogeneous spaces.

\section{Proof of the Laurent polynomial expression}\label{sec:Proof_of_LP_for_LG}
In section \ref{sec:LP_for_LG} we stated our main result in Theorem \ref{thm:Explicit_LP_LG_model}. This section is dedicated to proving this theorem. 
Before we get started on the proof, let us introduce the following notation:
\begin{df}
Write $\dfwtrep$ for the irreducible representation of $\dg$ with highest weight $\dfwt$, and denote by $\hwt$ a choice of a highest weight vector. Denote by $\lwt = \bwo\cdot\hwt$ the associated lowest weight vector. Note that the weight space of $\hwt$ is one-dimensional, so that the projection of an arbitrary $v\in\dfwtrep$ to this weight space (parallel to the other weight spaces) is a scalar multiple of $\hwt$; we denote this scalar by $\lan v,\hwt\ran$.
\end{df}
\begin{rem}\label{rem:Lifts_to_universal_cover}
Recall that every representation of a Lie algebra induces a representation of the associated simply-connected Lie group. Here, the highest weight representation $\dfwtrep$ of $\dg$ induces a representation of the universal cover $\udG$ of $\dG$. Since this representation does not always descend to a representation of $\dG$, we need to work on $\udG$ instead. Because we identified $\dunip$ and $\dunim$ with their universal covers, we consider the factors $u_+$ and $u_-$ of $z=u_+t\bwo u_-\in\opendecomps$ as elements of $\udG$. The same holds for the one-parameter subgroups $\dx_j(a)\in\dunip$ and $\dy_j(a)\in\dunim$ for $j\in\{1,\ldots,n\}$, which we also consider as elements of $\udG$. However, the elements $\ds_i$ and $\bs_i$ of $\dG$ associated to $s_i\in\weyl$ have multiple lifts to $\udG$; we choose the lifts to be the elements obtained by taking the product of the one-parameter subgroups in $\udG$, i.e.
\[
\ds_i = \dx_i(1)\dy_i(-1)\dx_i(1)~\in~\udG \qand \bs_i = \dx_i(-1)\dy_i(1)\dx_i(-1)~\in~\udG.
\]
Note that we abuse notation and denote these lifts in the same way as the original elements. Also note that with these choices we still have $\bs_i=\ds_i^{-1}$. The elements $\dw,\bw\in\udG$ associated to $w\in\weyl$ are similarly defined by $\dw=\ds_{i_1}\cdots\ds_{i_j}$ and $\bw=\bs_{i_1}\cdots\bs_{i_j}$ respectively, where $w=s_{i_1}\cdots s_{i_j}$ is a reduced expression. This fixes the lift of $z=u_+t\bwo u_-$ up to a choice of lift of $t\in\invdtorus\subset\dtorus$; the choices differ by a factor in $\ker(\sdr_k)\subset\udtorus$, so all the lifts have $\sdr_k(t)=q$ and we choose one arbitrarily (we will continue to abuse notation and also denote the lift of $t\in\invdtorus$ by $t\in\udtorus$).
\end{rem}
\begin{rem}
Expressions of the form $\lan g\cdot\hwt,\hwt\ran$ are a priori only defined for $g\in\udG$, so expressions of that form will always assume the group element $g$ to be elements of the universal cover $\udG$. Thus, the abuse of notation in Remark \ref{rem:Lifts_to_universal_cover} should not give rise to ambiguity.
\end{rem}
The proof of Theorem \ref{thm:Explicit_LP_LG_model} requires a few intermediate results which we will prove in section~\ref{sec:Proof_of_Lemmas}. Assuming for the moment that these hold, the proof of Theorem \ref{thm:Explicit_LP_LG_model} is a straightforward computation:
\begin{proof}[Proof of Theorem \ref{thm:Explicit_LP_LG_model}]\label{pf:Proof_of_the_explicit_LP_expression}
We want to find an expression for 
\[
\potZo(z) = \SumOfEs(u_+^{-1}) + \SumOfFs(u_-) = - \SumOfEs(u_+) + \SumOfFs(u_-)
\]
in terms of the toric coordinates of $z\in\opendecomps$. 
First, we calculate $\SumOfFs(u_-)$:
\begin{lem}\label{lem:F(u-)=sum_of_variables}
For $u_-\in\opendunim$ we have $\SumOfFs(u_-) = \sum_{i=1}^\ellwP a_i$.
\end{lem}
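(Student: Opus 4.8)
The plan is to exploit that $\SumOfFs$ is a group homomorphism from $\dunim$ to the additive group $\C$. Indeed, by the characterization in \eqref{eq:e*_and_f*} each summand $\dChfdual$ is a group homomorphism $\dunim\to\C$, and a finite sum of homomorphisms into an abelian group is again a homomorphism, so $\SumOfFs=\sum_{i=1}^n\dChfdual:\dunim\to\C$ is itself a homomorphism. This is the structural fact that makes the product defining $u_-$ collapse into a sum.

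First I would evaluate $\SumOfFs$ on a single one-parameter subgroup element. For any simple index $j\in\{1,\ldots,n\}$ and any $a\in\C$, equation \eqref{eq:e*_and_f*} gives $\dChfdual(\dy_j(a))=a\de_{ij}$, so summing over $i$ yields
\[
\SumOfFs(\dy_j(a)) = \sum_{i=1}^n a\de_{ij} = a,
\]
since exactly one index $i$ equals $j$. Thus each one-parameter factor contributes precisely its parameter, independently of which simple reflection index it carries. Next I would apply the homomorphism property to the factored form \eqref{eq:Elements_of_Bruhat_cell_in_U-} of $u_-\in\opendunim$: writing $u_-=\dy_{r_\ellwP}(a_\ellwP)\cdots\dy_{r_1}(a_1)$ and using additivity across the factors gives
\[
\SumOfFs(u_-) = \sum_{j=1}^\ellwP \SumOfFs\bigl(\dy_{r_j}(a_j)\bigr) = \sum_{j=1}^\ellwP a_j,
\]
which is exactly $\sum_{i=1}^\ellwP a_i$ as claimed.

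There is essentially no genuine obstacle here; the only point requiring care is the homomorphism property of $\SumOfFs$, which follows directly from \eqref{eq:e*_and_f*}. Once this is established the computation is immediate, precisely because the contribution of each factor $\dy_{r_j}(a_j)$ is its parameter $a_j$ regardless of the value of the index $r_j$, so no bookkeeping over the indices $r_1,\ldots,r_\ellwP$ is needed.
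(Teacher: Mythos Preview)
Your proof is correct and essentially the same as the paper's: both use that the $\dChfdual$ are group homomorphisms and the identity $\dChfdual(\dy_j(a))=a\de_{ij}$ from \eqref{eq:e*_and_f*}. The only cosmetic difference is that the paper first applies each $\dChfdual$ to the product and then sums over $i$, whereas you sum over $i$ first and then apply $\SumOfFs$ to the product.
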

Thus, we only need to find the term involving the quantum parameter, which comes from $\SumOfEs(u_+)=\sum_{i=1}^n\dChedual(u_+)$. We can rewrite each of the summands of this term as follows:
\begin{lem}\label{lem:expression_for_e_star_i}
For $z=u_+t\bwop u_-\in\opendecomps$ we have
\[
\dChedual(u_+) =  \sdr_i(t) \dfrac{\lan\bwo^{-1} u_-^{-1} \bwop^{-1}\bs_i\cdot\hwt,\hwt\ran}{\lan\bwo^{-1} u_-^{-1} \bwop^{-1}\cdot\hwt,\hwt\ran}.
\]
\end{lem}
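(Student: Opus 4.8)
The plan is to fix the index $i$, work inside the fundamental representation $\dfwtrep$ of $\dg$ with highest weight $\dfwt$ and chosen highest weight vector $\hwt$ (as set up at the start of this section), and convert the abstract coefficient $\dChedual(u_+)$ into a matrix coefficient of the $\udG$-action on $\dfwtrep$. The one geometric input is the identity relating the two factorizations of $z$ provided by Corollary \ref{cor:opendecoms_unique_decomp}: writing $z = u_+t\bwop u_- = b_-\bwo^{-1}$ with $b_-\in\dborelm$ and rearranging gives
\[
\bwo^{-1}u_-^{-1}\bwop^{-1} = b_-^{-1}u_+ t .
\]
I would substitute this into the numerator and the denominator of the right-hand side of the claimed formula, so that every occurrence of $u_-$ is replaced by the more manageable $b_-^{-1}u_+ t$.

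The heart of the proof is the extraction identity $\lan u_+\cdot\bs_i\hwt,\hwt\ran = \dChedual(u_+)$ in $\dfwtrep$. Here I would use only that $c(\dfwt,\sdr_i)=1$ (which holds for the $i$-th fundamental weight even though $\dfwtrep$ need not be minuscule when $i\neq k$): the $\sdr_i$-string through $\hwt$ then has length two, so $(\dChf_i)^2\cdot\hwt=0$ and a direct $\mathrm{SL}_2$-computation (compare Corollary \ref{cor:Action_of_s_e_f}) yields $\bs_i\cdot\hwt=\dChf_i\cdot\hwt$, a weight vector of weight $\dfwt-\sdr_i$ with $\dChe_i\cdot(\dChf_i\cdot\hwt)=\hwt$. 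Expanding $u_+ = 1 + \sum_j\dChedual[_j](u_+)\,\dChe_j + (\text{terms of degree}\ge2)$ in $\dCUEAp$ and applying it to $\dChf_i\cdot\hwt$, the only summand that can land in the one-dimensional top weight space $\C\hwt$ is the linear term $\dChe_i$, because no sum of two or more positive roots equals the simple root $\sdr_i$; this gives the identity.

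The remainder is torus bookkeeping. Passing the torus element $t$ through as a weight character gives $t\cdot\bs_i\hwt=(s_i\dfwt)(t)\,\bs_i\hwt$ and $t\cdot\hwt=\dfwt(t)\,\hwt$, while $b_-^{-1}\in\dborelm$ acts on the projection onto $\C\hwt$ by the scalar $\dfwt(\text{torus part of }b_-^{-1})$ (its unipotent part only lowers weights and so fixes that projection). This scalar is common to numerator and denominator and cancels, which also shows the expression is independent of the choice of $b_-$ and that the denominator is nonzero. Using $u_+\cdot\hwt=\hwt$ in the denominator and the extraction identity in the numerator, the quotient collapses to $\tfrac{(s_i\dfwt)(t)}{\dfwt(t)}\dChedual(u_+)=\sdr_i(t)^{-1}\dChedual(u_+)$, where I used $s_i(\dfwt)=\dfwt-\sdr_i$. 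Multiplying by the prefactor $\sdr_i(t)$ returns $\dChedual(u_+)$, as claimed.

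I expect the extraction identity to be the main obstacle, since it is the one step where the representation theory genuinely enters: one must be certain the matrix coefficient isolates exactly the linear coefficient $\dChedual(u_+)$ and nothing of higher order, which rests on the one-dimensionality of the extremal weight space together with the normalizations $\bs_i\hwt=\dChf_i\hwt$ and $\dChe_i\dChf_i\hwt=\hwt$ forced by $c(\dfwt,\sdr_i)=1$. A secondary point demanding care is to carry out everything with the fixed lifts to the universal cover $\udG$ from Remark \ref{rem:Lifts_to_universal_cover}, so that each symbol $\lan g\cdot\hwt,\hwt\ran$ is genuinely well defined.
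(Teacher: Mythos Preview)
Your proposal is correct and follows essentially the same approach as the paper: both use the identity $\lan u_+\bs_i\cdot\hwt,\hwt\ran=\dChedual(u_+)$ together with the two factorizations of $z$ (yielding $\bwo^{-1}u_-^{-1}\bwop^{-1}=b_-^{-1}u_+t$, equivalently $u_+=b_-\bwo^{-1}u_-^{-1}\bwop^{-1}t^{-1}$) and then do the same torus/Borel bookkeeping, the only cosmetic difference being that you simplify the right-hand side down to $\dChedual(u_+)$ whereas the paper starts from $\dChedual(u_+)$ and builds up the right-hand side. Your justification of the extraction identity via $c(\dfwt,\sdr_i)=1$ is in fact a bit more explicit than the paper's one-line appeal to uniqueness of the additive character.
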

It turns out that \emph{except} for $i=k$, these summands do not contribute to the sum:
\begin{lem}\label{lem:e_star_i_is_zero_for_i_unequal_to_k}
For $z=u_+t\bwop u_-\in\opendecomps$ and $i\neq k$ (where $k$ is such that $\P=\P_k$) we have $\dChedual(u_+) = 0$.
\end{lem}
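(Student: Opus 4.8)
The plan is to start from Lemma \ref{lem:expression_for_e_star_i} and reduce everything to the vanishing of a single matrix coefficient. The coefficients in that lemma live in the $i$-th fundamental representation $\dfwtrep$ of $\dg$ (highest weight $\dfwt$, highest weight vector $\hwt$), which for $i\neq k$ is \emph{not} minuscule, so Corollary \ref{cor:Action_of_s_e_f} is unavailable and the argument must be made by hand. Since $t\in\invdtorus\subset\dtorus$ forces $\sdr_i(t)\in\C^*$, and the denominator in Lemma \ref{lem:expression_for_e_star_i} is non-vanishing on $\opendecomps$ (the whole expression being a regular function there), it suffices to prove that the numerator vanishes:
\[
\lan\bwo^{-1}u_-^{-1}\bwop^{-1}\bs_i\cdot\hwt,\hwt\ran = 0 \qquad(i\neq k).
\]

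First I would simplify $\bs_i\cdot\hwt$. Because $c(\dfwt,\sdr_i)=1$, the $\sdr_i$-root string through $\dfwt$ has length two, so the elementary $\mathfrak{sl}_2$-computation gives $\bs_i\cdot\hwt=\dChf_i\cdot\hwt$ exactly as in Corollary \ref{cor:Action_of_s_e_f}. For $i\neq k$ the reflection $s_i$ lies in $\weylp$, hence $\dChf_i$ is a lowering operator of the Levi factor $\dg_L$ (the Dynkin diagram of $\dg$ with vertex $k$ removed). Conjugating by $\bwop^{-1}$, a lift of the longest element of $\weylp$, turns it into a Levi raising operator: $\bwop^{-1}\dChf_i\bwop=c\,\dChe_{\ihat}$ with $c\neq0$, where $\sdr_{\ihat}:=-\wop(\sdr_i)$ is a positive (simple) root. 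Thus $\bwop^{-1}\bs_i\cdot\hwt=c\,\dChe_{\ihat}\,\bwop^{-1}\hwt$, and the numerator becomes $c\,\lan\bwo^{-1}u_-^{-1}\dChe_{\ihat}\bwop^{-1}\hwt,\hwt\ran$.

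Next I would transfer all operators onto the right-hand vector using the contravariant form $(\cdot,\cdot)$ on $\dfwtrep$ (weight spaces orthogonal, $(\hwt,\hwt)=1$, so $\lan v,\hwt\ran=(v,\hwt)$), with adjoint anti-automorphism $\theta$ satisfying $\theta(\dChe_j)=\dChf_j$, $\theta(\dx_j(a))=\dy_j(a)$ and fixing $\dtorus$. Since $\theta(\bwo^{-1})\hwt$ is a nonzero multiple of the lowest weight vector $\lwt$ (of weight $\wo\dfwt$), pushing $\bwo^{-1}$, then $u_-^{-1}$, then $\dChe_{\ihat}$ across the form gives, up to a nonzero scalar,
\[
\lan\bwo^{-1}u_-^{-1}\bwop^{-1}\bs_i\cdot\hwt,\hwt\ran\ \propto\ \bigl(\bwop^{-1}\hwt,\ \dChf_{\ihat}\,\theta(u_-^{-1})\,\lwt\bigr),
\]
where $\bwop^{-1}\hwt$ has weight $\wop\dfwt$. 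The decisive input is Lemma \ref{lem:open_dunim_inside_dunimP}: from $\opendunim\subset\dborelp\bwPinv\dborelp$ one gets $u_-^{-1}\in\dborelp\bwP\dborelp$, and applying $\theta$ yields $\theta(u_-^{-1})\in\dborelm\,g\,\dborelm$ for a lift $g$ of $\wP^{-1}$. As $\dborelm$ fixes the lowest weight line and $\wP^{-1}\wo\dfwt=\wop\dfwt$ (using $\wo=\wP\wop$), the vector $\theta(u_-^{-1})\lwt$ has \emph{top} weight exactly $\wop\dfwt$; that is, all of its weights are $\le\wop\dfwt$. Hence its weight-$(\wop\dfwt+\sdr_{\ihat})$ component is zero, so the weight-$\wop\dfwt$ component of $\dChf_{\ihat}\,\theta(u_-^{-1})\lwt$ is zero, and the pairing above vanishes. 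This gives $\dChedual(u_+)=0$.

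The step I expect to be the main obstacle is carrying out the reduction \emph{inside the non-minuscule representation} $\dfwtrep$: establishing $\bs_i\cdot\hwt=\dChf_i\cdot\hwt$ and the $\bwop^{-1}\dChf_i\bwop=c\,\dChe_{\ihat}$ identity without the one-dimensionality of weight spaces, and fixing the lifts and signs carefully enough that the contravariant-form manipulations are legitimate. By contrast, once the coefficient is rewritten as a pairing against $\theta(u_-^{-1})\lwt$, the conclusion reduces to a clean highest-weight (Bruhat-cell) bound coming directly from Lemma \ref{lem:open_dunim_inside_dunimP}, and this is where the hypothesis $i\neq k$ (equivalently $s_i\in\weylp$) is used in an essential way.
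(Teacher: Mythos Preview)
Your argument is correct, and the main obstacle you flag is not really an obstacle: the identity $\bs_i\cdot\hwt=\dChf_i\cdot\hwt$ follows from the elementary $\mathfrak{sl}_2$-computation you indicate (only $c(\dfwt,\sdr_i)=1$ is used), and the conjugation $\bwop^{-1}\dChf_i\bwop=c\,\dChe_{\ihat}$ is just the statement that $\mathrm{Ad}(\bwop^{-1})$ carries the root space $\dg_{-\sdr_i}$ to $\dg_{-\wop(\sdr_i)}$, together with the standard fact that $-\wop$ permutes the simple roots of the Levi. Neither needs minusculeness.

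The paper's proof reaches the same conclusion by a more direct route that avoids the contravariant form entirely. Instead of dualising via $\theta$, it substitutes the Bruhat factorisation $u_-^{-1}=b_1\bwo^{-1}\bwop^{-1}b_2$ (with $b_1,b_2\in\dborelp$, from Lemma~\ref{lem:open_dunim_inside_dunimP}) directly into the numerator, observes that $\bwo^{-1}b_1\bwo^{-1}\in\dborelm$ contributes only a scalar to the highest-weight coefficient, and is left with showing $\lan\bwop^{-1}b_2\bwop^{-1}\bs_i\cdot\hwt,\hwt\ran=0$. A short weight chase then gives that the possible weights of $\bwop^{-1}b_2\bwop^{-1}\bs_i\cdot\hwt$ are $\dfwt-\sdr_i+\wop(\drt_+)$ with $\drt_+\ge0$, and this equals $\dfwt$ only if $\drt_+=\wop(\sdr_i)$, which is negative for $i\neq k$ --- the same Weyl-group fact you use when you introduce $\sdr_{\ihat}=-\wop(\sdr_i)$. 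So both proofs hinge on the same two ingredients (the Bruhat cell of $u_-$ and the sign of $\wop(\sdr_i)$ for $i\neq k$); yours repackages them through the Shapovalov form and the anti-involution $\theta$, while the paper keeps everything on the ``highest-weight side'' and needs no additional machinery.
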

Altogether, we have now found that
\[
\SumOfEs(u_+) = \dChedual[_k](u_+) = q\frac{\lan\bwo^{-1} u_-^{-1} \bwop^{-1}\bs_k\cdot\hwt[k],\hwt[k]\ran}{\lan\bwo^{-1} u_-^{-1} \bwop^{-1}\cdot\hwt[k],\hwt[k]\ran},
\]
where $q=\sdr_k(t)$. Now we of course need to calculate the denominator and numerator of this quotient:
\begin{lem}\label{lem:denominator_of_e_star_k}
 For $u_-\in\opendunim$ we have $\lan\bwo^{-1} u_-^{-1} \bwop^{-1}\cdot\hwt[k],\hwt[k]\ran=(-1)^\ellwP\prod_{i=1}^\ellwP a_i$, where $\ellwP=\ell(\wP)$.
\end{lem}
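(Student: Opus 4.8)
The plan is to reduce this matrix coefficient to the extraction of a single weight component of an explicitly expanded product, using that by definition $\lan v,\hwt[k]\ran$ is the coefficient of the one-dimensional highest weight line when $v\in\dfwtrep[k]$ is written in the weight basis of Theorem~\ref{thm:Green_structure_minuscule_reps}(iii). First I would discard the right-hand factor: by Lemma~\ref{lem:action_of_weyl_elts}(iv) the element $\bwop^{-1}$ acts trivially on $\hwt[k]$, so $\bwop^{-1}\cdot\hwt[k]=\hwt[k]$ and the quantity to compute becomes $\lan\bwo^{-1}u_-^{-1}\cdot\hwt[k],\hwt[k]\ran$.

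Next I would turn the leftmost $\bwo^{-1}$ into a projection onto the \emph{lowest} weight. By Corollary~\ref{cor:Action_of_s_e_f} each $\bs_i$ permutes the one-dimensional weight spaces of $\dfwtrep[k]$ up to a sign, so $\bwo^{-1}$ is a signed permutation of the weight basis carrying the weight-$\mu$ line to the weight-$\wo(\mu)$ line. The only weight $\mu$ with $\wo(\mu)=\dfwt[k]$ is the lowest weight $\wo(\dfwt[k])$, and $\bwo^{-1}\cdot\lwt[k]=\hwt[k]$ with coefficient exactly $+1$, since $\lwt[k]=\bwo\cdot\hwt[k]$. Hence for any $v\in\dfwtrep[k]$ the coefficient of $\hwt[k]$ in $\bwo^{-1}\cdot v$ equals the coefficient of $\lwt[k]$ in $v$; applied to $v=u_-^{-1}\cdot\hwt[k]$ this reduces the claim to finding the coefficient of $\lwt[k]$ in $u_-^{-1}\cdot\hwt[k]$.

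Finally I would expand. Since $u_-=\dy_{r_\ellwP}(a_\ellwP)\cdots\dy_{r_1}(a_1)$ we have $u_-^{-1}=\dy_{r_1}(-a_1)\cdots\dy_{r_\ellwP}(-a_\ellwP)$, and Theorem~\ref{thm:Green_structure_minuscule_reps}(iv) gives $(\dChf_{r_j})^2=0$ on $\dfwtrep[k]$, so each factor acts as $1-a_j\dChf_{r_j}$. In the resulting expansion, the term indexed by a subset $S\subseteq\{1,\ldots,\ellwP\}$ has weight $\dfwt[k]-\sum_{j\in S}\sdr_{r_j}$, which equals the lowest weight $\dfwt[k]-\sum_{j=1}^{\ellwP}\sdr_{r_j}$ only for $S=\{1,\ldots,\ellwP\}$, because any proper subset omits at least one positive simple root and hence gives a strictly higher weight. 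That single surviving term is $(-1)^\ellwP\bigl(\prod_{i=1}^\ellwP a_i\bigr)\,\dChf_{r_1}\cdots\dChf_{r_\ellwP}\cdot\hwt[k]$, which by Lemma~\ref{lem:action_of_weyl_elts}(iv) equals $(-1)^\ellwP\bigl(\prod_{i=1}^\ellwP a_i\bigr)\,\lwt[k]$, yielding the asserted value.

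The only genuinely delicate point is the middle step: verifying that $\bwo^{-1}$ intertwines the highest- and lowest-weight projections, and in particular that the intervening sign is exactly $+1$ rather than some $(-1)^{\ell(\wo)}$. This is pinned down cleanly by the normalization $\lwt[k]=\bwo\cdot\hwt[k]$ together with the signed-permutation description in Corollary~\ref{cor:Action_of_s_e_f}; the remaining steps are routine bookkeeping in the minuscule weight basis.
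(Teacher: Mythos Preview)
Your proof is correct and follows essentially the same approach as the paper: remove $\bwop^{-1}$ via Lemma~\ref{lem:action_of_weyl_elts}(iv), reduce to the coefficient of $\lwt[k]$ in $u_-^{-1}\cdot\hwt[k]$ using that $\bwo^{-1}$ permutes weight spaces and sends $\lwt[k]$ to $\hwt[k]$, truncate each $\dy_{r_j}(-a_j)$ to $1-a_j\dChf_{r_j}$ by Theorem~\ref{thm:Green_structure_minuscule_reps}(iv), and observe that only the full product lands in the lowest weight space. You are in fact slightly more explicit than the paper about why the sign in $\bwo^{-1}\cdot\lwt[k]=\hwt[k]$ is $+1$, which the paper handles in one phrase by citing bijectivity.
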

\begin{lem}\label{lem:numerator_of_e_star_k}
For $u_-\in\opendunim$ we have 
\[
\lan\bwo^{-1} u_-^{-1} \bwop^{-1}\bs_k\cdot\hwt[k],\hwt[k]\ran = (-1)^{\ellwP+1}\sum_{(i_j)\in\wPrimeSubExp} a_{i_1}\cdots a_{i_{\ellwPrime}},
\]
where $\wPrimeSubExp = \{(i_1,\ldots,i_{\ellwPrime})~|~1\le i_1<i_2<\ldots<i_{\ellwPrime}\le\ellwP \text{ and } \wPrime=s_{r_{i_1}}\cdots s_{r_{i_{\ellwPrime}}}\}$ and where we fixed the reduced expression $\wP=s_{r_1}\cdots s_{r_\ellwP}$ in section \ref{sec:notation}.
\end{lem}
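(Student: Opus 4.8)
The plan is to compute the matrix coefficient directly by expanding $u_-^{-1}$ as an operator on the minuscule representation $\dfwtrep[k]$ and isolating the terms that reach the lowest weight vector, in complete parallel with the simpler denominator computation of Lemma \ref{lem:denominator_of_e_star_k}. First I would convert the pairing against $\hwt[k]$ placed after $\bwo^{-1}$ into a pairing against $\lwt[k]$ placed before it: since $\lwt[k]=\bwo\cdot\hwt[k]$ we have $\bwo^{-1}\cdot\lwt[k]=\hwt[k]$, and because $\bwo^{-1}$ permutes the one-dimensional weight spaces sending the lowest weight space to the highest, the coefficient of $\hwt[k]$ in $\bwo^{-1}v$ equals the coefficient of $\lwt[k]$ in $v$ for every $v\in\dfwtrep[k]$. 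Thus it suffices to compute the coefficient of $\lwt[k]$ in $u_-^{-1}\bwop^{-1}\bs_k\cdot\hwt[k]$.

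Next I would identify the vector $\psi:=\bwop^{-1}\bs_k\cdot\hwt[k]$. By Corollary \ref{cor:Action_of_s_e_f} with $c(\dfwt[k],\sdr_k)=1$ we have $\bs_k\cdot\hwt[k]=\dChf_k\cdot\hwt[k]$, of weight $s_k(\dfwt[k])$; since $\wop\in\weylp$ fixes $\dfwt[k]$ and $\wop s_k$ has minimal coset representative $\wPPrime$, the vector $\psi$ has weight $\wPPrime(\dfwt[k])$, the same as $\overline{\wPPrime}\cdot\hwt[k]$, so one-dimensionality of weight spaces gives $\psi=c_0\,\overline{\wPPrime}\cdot\hwt[k]$ for a sign $c_0$. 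Using $(\dChf_{r_i})^2=0$ from Theorem \ref{thm:Green_structure_minuscule_reps}(iv), I would expand
\[
u_-^{-1} = (1-a_1\dChf_{r_1})(1-a_2\dChf_{r_2})\cdots(1-a_\ellwP\dChf_{r_\ellwP})
\]
as a sum over subsets $S=\{j_1<\cdots<j_t\}\subseteq\{1,\ldots,\ellwP\}$ contributing $(-1)^t(\prod_{i\in S}a_i)\,\dChf_{r_{j_1}}\cdots\dChf_{r_{j_t}}$. Writing $\overline{\wPPrime}\cdot\hwt[k]$ as a reduced-word product of lowering operators on $\hwt[k]$ (Lemma \ref{lem:action_of_weyl_elts}(ii)), the vector $\dChf_{r_{j_1}}\cdots\dChf_{r_{j_t}}\overline{\wPPrime}\cdot\hwt[k]$ is again a product of $\dChf$'s on $\hwt[k]$, so by the final statement of Lemma \ref{lem:action_of_weyl_elts}(iv) it equals $\lwt[k]$ with coefficient exactly $+1$ precisely when $s_{r_{j_1}}\cdots s_{r_{j_t}}\wPPrime=\wP$ reduced; as $\wP=\wPrime\wPPrime$ with $\ellwP=\ellwPrime+\ellwPPrime$, this holds exactly when $(j_1,\ldots,j_t)\in\wPrimeSubExp$ (forcing $t=\ellwPrime$). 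Hence the coefficient of $\lwt[k]$ in $u_-^{-1}\psi$ is $c_0(-1)^{\ellwPrime}\sum_{(i_j)\in\wPrimeSubExp}a_{i_1}\cdots a_{i_{\ellwPrime}}$.

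The main obstacle is pinning down the sign $c_0$, which I expect to be $(-1)^{\ellwPPrime+1}$, yielding the claimed global sign $(-1)^{\ellwPrime}(-1)^{\ellwPPrime+1}=(-1)^{\ellwP+1}$. To obtain it I would first note $\bwop^{-1}=\dwop$, since $\bs_i^{-1}=\ds_i$ and the dot-lifts are independent of the chosen reduced word. Comparing the actions of $\dwop$ and $\bwop$ via Corollary \ref{cor:Action_of_s_e_f}, the factors $\ds_i$ and $\bs_i$ agree on a weight vector when $c=0$ and differ by a sign when $c=\pm1$. Since $s_{q_1}\cdots s_{q_m}s_k$ is a reduced word for $\wop s_k$ (because $\wop(\sdr_k)$ is positive), Lemma \ref{lem:action_of_weyl_elts}(i) gives $\bwop\bs_k\cdot\hwt[k]=\overline{\wop s_k}\cdot\hwt[k]=\overline{\wPPrime}\cdot\hwt[k]$ with coefficient $+1$, so $c_0=(-1)$ to the number of factors of $\dwop$ that actually move the running weight vector. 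A parity count then finishes the argument: each such move changes by $\pm1$ the total number of simple roots subtracted from $\dfwt[k]$, the starting vector $\dChf_k\cdot\hwt[k]$ having this number equal to $1$ and the final vector $\overline{\wPPrime}\cdot\hwt[k]$ having it equal to $\ellwPPrime$, so the number of moves is congruent to $\ellwPPrime-1\pmod 2$ and $c_0=(-1)^{\ellwPPrime+1}$. Assembling these pieces with the first-paragraph reduction proves the lemma.
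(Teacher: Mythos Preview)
Your proof is correct and follows essentially the same approach as the paper: reduce to the coefficient of $\lwt[k]$, identify $\bwop^{-1}\bs_k\cdot\hwt[k]$ with $(-1)^{\ellwPPrime+1}\,\overline{\wPPrime}\cdot\hwt[k]$, expand $u_-^{-1}$ using $(\dChf_i)^2=0$, and invoke Lemma~\ref{lem:action_of_weyl_elts}(iv) to see that the surviving terms are exactly the subexpressions of $\wPrime$ in $\wP$. The only difference is in how you extract the sign $c_0$: the paper writes $\bwop^{-1}\bs_k\cdot\hwt[k]=-\dot s_{q_m}\cdots\dot s_{q_1}\dot s_k\cdot\hwt[k]=-\dwPPrime\cdot\hwt[k]$ directly and then applies Lemma~\ref{lem:action_of_weyl_elts}(ii), whereas you compare $\dwop$ to $\bwop$ factor by factor and run a height parity count---both arguments yield $c_0=(-1)^{\ellwPPrime+1}$, with the paper's route being the more direct one.
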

Thus, we find
\[
\SumOfEs(u_+) = -q\frac{\sum_{(i_j)\in\wPrimeSubExp} a_{i_1}\cdots a_{i_{\ellwPrime}}}{\prod_{i=1}^\ellwP a_i}.
\]
Inserting this together with the expression for $\SumOfFs(u_-)$ into $\potZo(z)=-\SumOfEs(u_+)+\SumOfFs(u_-)$, we obtain the statement of Theorem \ref{thm:Explicit_LP_LG_model}.
\end{proof}

\section{Proof of the intermediate results}\label{sec:Proof_of_Lemmas}
We only need to prove the lemmas stated in the proof of Theorem \ref{thm:Explicit_LP_LG_model} to conclude the result.
\begin{rem}
The five intermediate lemmas in the proof of Theorem \ref{thm:Explicit_LP_LG_model} are generalizations of Lemma 5.5 of \cite{Pech_Rietsch_Odd_Quadrics}. Lemmas \ref{lem:F(u-)=sum_of_variables}, \ref{lem:expression_for_e_star_i} and \ref{lem:e_star_i_is_zero_for_i_unequal_to_k} follow similar reasoning in the general case. The proof of Lemma \ref{lem:denominator_of_e_star_k} requires a modification using the general structure of minuscule representations, while Lemma \ref{lem:numerator_of_e_star_k} follows a different line of reasoning than its counterpart in \cite{Pech_Rietsch_Odd_Quadrics}.
\end{rem}
The first lemma is very straightforward:
\begin{proof}[Proof of Lemma \ref{lem:F(u-)=sum_of_variables}.]
We want to show that $\SumOfFs(u_-) = \sum_{i=1}^\ellwP a_i$ for $u_-\in\opendunim$.

Recall from equation \eqref{eq:e*_and_f*} that $\dChfdual(y_j(a))=a\de_{ij}$. From this it follows that
\[
\dChfdual(u_-) = \dChfdual\bigl(\dy_{r_\ellwP}(a_\ellwP)\cdots\dy_{r_1}(a_1)\bigr) = a_1\de_{i,r_1} + \ldots + a_\ellwP\de_{i,r_\ellwP}.
\]
Summing over all $i\in\{1,\ldots,n\}$, we find $\SumOfFs(u_-) = \sum_{i=1}^n\dChfdual(u_-) = \sum_{i=1}^\ellwP a_i$.
\end{proof}
Unfortunately, the other term, $\SumOfEs(u_+)=\sum_{i=1}^n\dChedual(u_+)$, will not be as easy. We will first reformulate each of the terms $\dChe_i(u_+)$:
\begin{proof}[Proof of Lemma \ref{lem:expression_for_e_star_i}.]
We want to show that
\[
\dChedual(u_+) =  \sdr_i(t) \dfrac{\lan\bwo^{-1} u_-^{-1} \bwop^{-1}\bs_i\cdot\hwt,\hwt\ran}{\lan\bwo^{-1} u_-^{-1} \bwop^{-1}\cdot\hwt,\hwt\ran}.
\]
for $z=u_+t\bwop u_-\in\opendecomps$.

First, note that the map $\dunip\to\C:u\mapsto\lan u\bs_i\cdot\hwt,\hwt\ran$ is equal to the unique homomorphism $\dChedual$ sending $\dx_i(a)=\exp(a\,\dChe_i)$ to $a$ and the other one-parameter-subgroups to zero, so
\[
\dChedual(u_+) = \lan u_+\bs_i\cdot\hwt,\hwt\ran.
\]

Next, we use the fact that $u_+$ is a factor in the decomposition of $z\in\opendecomps$ as $z=u_+t\bwop u_-$ to find a decomposition for $u_+$ itself. By definition, we have $z=b_-\bwo^{-1}$ for some $b_-\in\dborelm$ as $\opendecomps\subset\dborelm\bwo^{-1}$. In Remark \ref{rem:Lifts_to_universal_cover} we fixed lifts of $z=u_+t\bwop u_-$ and the elements $\ds_i$ and $\bs_i$ to $\udG$. Thus, there is a unique lift of $b_-\in\dborelm$, also denoted by $b_-\in\udborelm$, such that $b_-\bwo^{-1}=z\in\udG$. This gives
\begin{equation}
u_+ = b_-\bwo^{-1} u_-^{-1}\bwop^{-1} t^{-1} ~\in~\udG.
\label{eq:decomposition_for_u+}
\end{equation}
Thus, we have to calculate $\lan b_-\bwo^{-1} u_-^{-1}\bwop^{-1} t^{-1}\bs_i\cdot\hwt,\hwt\ran$.

Now, $\bs_i\cdot\hwt$ has weight $\dfwt-\sdr_i$, so $t^{-1}\in\udtorus$ acts on this vector by scalar multiplication with
\[
[\dfwt-\sdr_i](t^{-1}) = \frac{\dfwt(t^{-1})}{\sdr_i(t^{-1})} = \frac{\sdr_i(t)}{\dfwt(t)}~\in~\C.
\]
(Note that weights and roots are written additively.) We conclude
\[
\dChedual(u_+) = \sdr_i(t)\frac{\lan b_-\bwo^{-1} u_-^{-1}\bwop^{-1} \bs_i\cdot\hwt,\hwt\ran}{\dfwt(t)}.
\]

Noting that $b_-\in\udborelm$ sends a vector to a linear combination of vectors of \emph{equal or lower} weight, we see that the only contribution of $b_-$ to $\lan b_-\bwo^{-1} u_-^{-1}\bwop^{-1} \bs_i\cdot\hwt,\hwt\ran$ will be the factor $\lan b_-\cdot\hwt,\hwt\ran$, so we find
\begin{equation}
\dChedual(u_+) = \sdr_i(t)\frac{\lan b_-\cdot\hwt,\hwt\ran}{\dfwt(t)}\,\lan\bwo^{-1} u_-^{-1}\bwop^{-1} \bs_i\cdot\hwt,\hwt\ran.
\label{eq:e*_of_u+_intermediate}
\end{equation}

Finally, we use the decomposition in \eqref{eq:decomposition_for_u+} together with the fact that $\lan u_+\cdot\hwt,\hwt\ran = 1$ (as $u_+\in\dunip$) to conclude that
\[
1 = \lan u_+\cdot\hwt,\hwt\ran = \lan b_-\bwo^{-1} u_-^{-1}\bwop^{-1} t^{-1}\cdot\hwt,\hwt\ran = \frac{\lan b_-\cdot\hwt,\hwt\ran}{\dfwt(t)}\,\lan\bwo^{-1} u_-^{-1}\bwop^{-1} \cdot\hwt,\hwt\ran,
\]
where we calculated the contributions of $b_-$ and $t^{-1}$ in an analogous way as above. Substituting this into \eqref{eq:e*_of_u+_intermediate}, we obtain
\[
\dChedual(u_+) =  \sdr_i(t) \dfrac{\lan\bwo^{-1} u_-^{-1} \bwop^{-1}\bs_i\cdot\hwt,\hwt\ran}{\lan\bwo^{-1} u_-^{-1} \bwop^{-1}\cdot\hwt,\hwt\ran},
\]
as in the statement of the lemma.
\end{proof}

Lemma \ref{lem:e_star_i_is_zero_for_i_unequal_to_k} claims that all of the summands of $\SumOfEs(u_+)=\sum_{i=1}^n\dChedual(u_+)$ are zero, except for $i=k$ (where $k$ is such that $\P=\P_k$).
\begin{proof}[Proof of Lemma \ref{lem:e_star_i_is_zero_for_i_unequal_to_k}] We need to show that $\dChedual(u_+) = 0$ for $z=u_+t\bwop u_-\in\opendecomps$ and $i\neq k$ (where $k$ is such that $\P=\P_k$).

Considering the expression for $\dChedual(u_+)$ of Lemma \ref{lem:expression_for_e_star_i}, we need to show that
\[
\lan\bwo^{-1} u_-^{-1} \bwop^{-1}\bs_i\cdot\hwt,\hwt\ran = 0 \qfor i\neq k.
\]
Recall that we assumed $u_-\in\opendunim\subset\dunimP=\dunim\cap\dborelp\bwop\bwo\dborelp$ (see Lemma \ref{lem:open_dunim_inside_dunimP}). Thus, we have $u_-^{-1}\in\dborelp\bwo^{-1}\bwop^{-1}\dborelp$; in other words, there are $b_1,b_2\in\dborelp$ such that $u_-^{-1} = b_1\bwo^{-1}\bwop^{-1} b_2$. Choosing lifts $b_1,b_2\in\udborelp$ such that $u_-^{-1} = b_1\bwo^{-1}\bwop^{-1} b_2$ as elements of $\udG$ (again abusing notation), it follows that we have to show that
\[
\lan\bwo^{-1} b_1\bwo^{-1}\bwop^{-1} b_2\bwop^{-1}\bs_i\cdot\hwt,\hwt\ran = 0 \qfor i\neq k.
\]
Now, $\bwo^{-1} b_1\bwo^{-1}\in\bwo^{-1}\udborelp\bwo^{-1} =\udborelm$, so using an analogous argument to the one in the proof of Lemma \ref{lem:expression_for_e_star_i}, we find that 
\[
\lan\bwo^{-1} b_1\bwo^{-1}\bwop^{-1} b_2\bwop^{-1}\bs_i\cdot\hwt,\hwt\ran = \lan\bwo^{-1} b_1\bwo^{-1}\cdot\hwt,\hwt\ran\lan\bwop^{-1} b_2\bwop^{-1}\bs_i\cdot\hwt,\hwt\ran.
\]
As $\lan\bwo^{-1} b_1\bwo^{-1}\cdot\hwt,\hwt\ran$ only contributes a scalar factor, we need to show that
\begin{equation}
\lan\bwop^{-1} b_2\bwop^{-1}\bs_i\cdot\hwt,\hwt\ran = 0 \qfor i\neq k.
\label{eq:e_star_i_is_zero_intermediate}
\end{equation}
In other words, it is enough to show that $\bwop^{-1} b_2\bwop^{-1}\bs_i\cdot\hwt$ has no components of weight $\dfwt$. This is a straightforward argument with weights: $\bs_i\cdot\hwt$ has weight $\dfwt-\sdr_i$, so that $\bwop^{-1}\bs_i\cdot\hwt$ has weight $\wop\bigl(\dfwt-\sdr_i\bigr)$, noting that $\wop^{-1}=\wop$ as it is the longest element of the Weyl group $\weylp=\lan s_i~|~i\neq k\ran$. Now, as $b_2\in\udborelp$, all components of $b_2\bwop^{-1}\bs_i\cdot\hwt$ will have weight $\wop\bigl(\dfwt-\sdr_i\bigr)+\drt_+$ for some (possibly trivial) sum $\drt_+$ of positive roots. Thus, we find that $\bwop^{-1} b_2\bwop^{-1}\bs_i\cdot\hwt$ has components of weight
\[
\wop\Bigl(\wop\bigl(\dfwt-\sdr_i\bigr)+\drt_+\Bigr) = \dfwt-\sdr_i+\wop(\drt_+),
\]
again using $\wop^{-1}=\wop$. Thus, one of these components has weight $\dfwt$ if and only if $\wop(\drt_+)=\sdr_i$. However, since $\wop$ is the longest element of the Weyl group $\weylp=\lan s_i~|~i\neq k\ran$, we know that it maps all the simple roots $\sdr_i$ with $i\neq k$ to negative roots, but then we must have that $\drt_+=\wop(\sdr_i)\in\ndroots$ is a \emph{negative} root and definitely not a sum of positive roots, which gives a contradiction. Thus, all components have weight unequal to $\dfwt$, implying that \eqref{eq:e_star_i_is_zero_intermediate} holds, which in turn implies the lemma.
\end{proof}

Combining Lemmas \ref{lem:expression_for_e_star_i} and \ref{lem:e_star_i_is_zero_for_i_unequal_to_k}, we conclude that
\begin{equation}
\SumOfEs(u_+) = \dChedual[_k](u_+) = q\frac{\lan\bwo^{-1} u_-^{-1} \bwop^{-1}\bs_k\cdot\hwt[k],\hwt[k]\ran}{\lan\bwo^{-1} u_-^{-1} \bwop^{-1}\cdot\hwt[k],\hwt[k]\ran}.
\label{eq:e*_k_intermediate_as_fraction}
\end{equation}
Lemma \ref{lem:denominator_of_e_star_k} calculates the denominator of this quotient and Lemma \ref{lem:numerator_of_e_star_k} calculates its numerator:
\begin{proof}[Proof of Lemma \ref{lem:denominator_of_e_star_k}.]
We need to show that for $u_-\in\opendunim$ we have 
\[
\lan\bwo^{-1} u_-^{-1} \bwop^{-1}\cdot\hwt[k],\hwt[k]\ran=(-1)^\ellwP\prod_{i=1}^\ellwP a_i,
\]
where $\ellwP=\ell(\wP)$.

Using Lemma \ref{lem:action_of_weyl_elts} \textbf{(iv)} we find
\[
\lan\bwo^{-1} u_-^{-1} \bwop^{-1}\cdot\hwt[k],\hwt[k]\ran = \lan\bwo^{-1} u_-^{-1}\cdot\hwt[k],\hwt[k]\ran.
\]
By definition of $\opendunim$ (see Definition \ref{df:opendunim}), $u_-^{-1}$ has a decomposition of the form
\[
u_-^{-1} = \dy_{r_1}(-a_1) \cdots \dy_{r_\ellwP}(-a_\ellwP),
\]
where the sequence of indices $(r_1,\ldots,r_\ellwP)$ is the same as the one used in the reduced expression $\wP=s_{r_1}\cdots s_{r_\ellwP}$ fixed in equation \eqref{eq:Fixed_reduced_expression_for_wP_and_wop}. 
Now, $\dy_i(a) = \exp(a\,\dChf_i) = 1 + a\,\dChf_i + \frac12 a^2(\dChf_i)^2+\ldots$, but only the first two terms act non-trivially on the representation, since $(\dChf_i)^2\cdot v=0$ for all $v\in\dfwtrep[k]$ according to Theorem \ref{thm:Green_structure_minuscule_reps} \textbf{(iv)}. We conclude that
\[
u_-^{-1}\cdot\hwt[k] = (1-a_1\,\dChf_{r_1})\cdots(1-a_\ellwP\,\dChf_{r_\ellwP})\cdot\hwt[k].
\]
Note that this is a sum of vectors of different weights, the term of highest weight being $\hwt[k]$ (obtained by taking the term with all the identity factors), and the term of lowest weight being (see Lemma \ref{lem:action_of_weyl_elts} \textbf{(iv)})
\[
(-a_1)\cdots(-a_\ellwP)\dChf_{r_1}\cdots\dChf_{r_\ellwP}\cdot\hwt[k] = (-1)^\ellwP\bigl(\tprod[_{i=1}^\ellwP]a_i\bigr)\lwt[k].
\]
Only the lowest weight term contributes a coefficient to $\lan\bwo^{-1} u_-^{-1}\cdot\hwt[k],\hwt[k]\ran$ as $\bwo^{-1}\lwt[k]=\hwt[k]$ and $\bwo^{-1}$ is a bijection. Thus, we obtain
\[
\lan\bwo^{-1} u_-^{-1} \bwop^{-1}\cdot\hwt[k],\hwt[k]\ran = (-1)^\ellwP\tprod[_{i=1}^\ellwP] a_i
\]
as we wanted to show.
\end{proof}

Now we turn to the numerator of \eqref{eq:e*_k_intermediate_as_fraction}:

\begin{proof}[Proof of Lemma \ref{lem:numerator_of_e_star_k}.]
We need to show that for $u_-\in\opendunim$ we have 
\[
\lan\bwo^{-1} u_-^{-1} \bwop^{-1}\bs_k\cdot\hwt[k],\hwt[k]\ran = (-1)^{\ellwP+1}\sum_{(i_j)\in\wPrimeSubExp} a_{i_1}\cdots a_{i_{\ellwPrime}},
\]
where $\wPrimeSubExp = \{(i_1,\ldots,i_{\ellwPrime})~|~1\le i_1<i_2<\ldots<i_{\ellwPrime}\le\ellwP \text{ and } \wPrime=s_{r_{i_1}}\cdots s_{r_{i_{\ellwPrime}}}\}$ was defined in Definition \ref{df:wPrimeSubExp} and where we fixed the reduced expression $\wP=s_{r_1}\cdots s_{r_\ellwP}$ in equation \eqref{eq:Fixed_reduced_expression_for_wP_and_wop}.

As we saw in the proof of Lemma \ref{lem:denominator_of_e_star_k}, we only need to consider the lowest-weight term of the vector $u_-^{-1} \bwop^{-1}\bs_k\cdot\hwt[k]$, as it is the only term mapped to $\hwt[k]$ by $\bwo^{-1}$. However, $\bwop^{-1}\bs_k$ acts non-trivially on $\hwt[k]$, whereas in the proof of Lemma \ref{lem:denominator_of_e_star_k} $\bwop^{-1}$ acted trivially on $\hwt[k]$ by Lemma \ref{lem:action_of_weyl_elts} \textbf{(iv)}.

Recall that we had fixed the reduced expression $\wop = s_{q_1}\cdots s_{q_m}$ in equation \eqref{eq:Fixed_reduced_expression_for_wP_and_wop} and that $\bs_i^{-1} = \ds_i$, so we find that $\bwop^{-1} = \ds_{q_m}\cdots\ds_{q_1}$. Moreover, note that $\bs_k\cdot\hwt[k] = \dChf_k\cdot\hwt[k] = -\ds_k\cdot\hwt[k]$ (see Corollary \ref{cor:Action_of_s_e_f}). All in all, we find that
\[
\bwop^{-1}\bs_k\cdot\hwt[k] = - \ds_{q_m}\cdots\ds_{q_1}\ds_k\cdot\hwt[k].
\]
In section \ref{sec:structure_of_minuscule_representations} we have written $\wPPrime\in\cosets$ for the minimal coset representative of the coset $\wop s_k\weylp$ (note that $\wop^{-1}=\wop$) and written $\ellwPPrime=\ell(\wPPrime)\le\ell(\wP)=\ellwP$ for its length. Since $s_{q_m}\cdots s_{q_1}$ is a reduced expression for $\wop\in\weylp$ and $s_k\notin\weylp$, we deduce that $s_{q_m}\cdots s_{q_1}s_k$ is a reduced expression for $\wop s_k$. Thus, by Lemma \ref{lem:action_of_weyl_elts} \textbf{(i)} and \textbf{(ii)}, we deduce that $\bwop^{-1}\bs_k\cdot\hwt[k] = -\ds_{q_m}\cdots\ds_{q_1}\ds_k\cdot\hwt[k] = -\dwop\ds_k\cdot\hwt[k]$ and that
\begin{equation}
\bwop^{-1}\bs_k\cdot\hwt[k] = -\dwop\ds_k\cdot\hwt[k] = -\dwPPrime\cdot\hwt[k] = (-1)^{\ellwPPrime+1} \dChf_{j_1}\cdots\dChf_{j_{\ellwPPrime}}\cdot\hwt[k],
\label{eq:wop_sk}
\end{equation}
where $\wPPrime = s_{j_1}\cdots s_{j_{\ellwPPrime}}$ is a reduced expression.

Note that in the case $\ellwPPrime=\ellwP$ (which only occurs for $\CP^n=\Gr(1,n+1)=\LGA_n^\SC/\P_1\cong\LGA_n^\SC/\P_n$) the following arguments become trivial, see Remark \ref{rem:The_Case_ellwPPrime_equals_ellwP} below. 

Next, we need to multiply this vector by $u_-^{-1}$, which due to Theorem \ref{thm:Green_structure_minuscule_reps} \textbf{(iv)} reduces to multiplying by $(1-a_1\,\dChf_{r_1})\cdots(1-a_\ellwP\,\dChf_{r_\ellwP})$ in the representation. As we mentioned at the start of the proof, we only need to look at the coefficient in front of $\lwt[k]$ in this product. From Lemma \ref{lem:action_of_weyl_elts} \textbf{(iv)} we know that $\dChf_{i_1}\cdots\dChf_{i_\ellwP}\cdot\hwt[k]=\lwt[k]$ if and only if $s_{i_1}\cdots s_{i_\ell}=\wP$ and this is a reduced expression. So we need exactly those terms of $(1-a_1\,\dChf_{r_1})\cdots(1-a_\ellwP\,\dChf_{r_\ellwP})$ that complete $\dChf_{j_1}\cdots\dChf_{j_{\ellwPPrime}}\cdot\hwt[k]$ to $\dChf_{r_{i_1}}\cdots\dChf_{r_{i_{\ellwP-\ellwPPrime}}}\dChf_{j_1}\cdots\dChf_{j_{\ellwPPrime}}\cdot\hwt[k]$ in such a way that the indices satisfy $s_{r_{i_1}}\cdots s_{r_{i_{\ellwP-\ellwPPrime}}}s_{j_1}\cdots s_{j_{\ellwPPrime}}=\wP$. However, $s_{j_1}\cdots s_{j_{\ellwPPrime}}=\wPPrime$ and $s_{r_1}\cdots s_{r_\ell}=\wP$, so we obtain a contributing term for every subexpression $s_{r_{i_1}}\cdots s_{r_{i_{\ellwP-\ellwPPrime}}}$ of the fixed reduced expression for $\wP$ such that $s_{r_{i_1}}\cdots s_{r_{i_{\ellwP-\ellwPPrime}}}\wPPrime = \wP$. Therefore, every subexpression of $\wPrime=\wP(\wPPrime)^{-1}$ in the fixed reduced expression of $\wP$ gives a contributing term.

Now, in Definition \ref{df:wPrimeSubExp} we defined the set indexing these subexpressions as
\[
\wPrimeSubExp = \{(i_1,\ldots,i_{\ellwPrime})~|~1\le i_1<i_2<\ldots<i_{\ellwPrime}\le\ellwP \text{ and } \wPrime=s_{r_{i_1}}\cdots s_{r_{i_{\ellwPrime}}}\},
\]
where $\ellwPrime=\ell(\wPrime)=\ell(\wP)-\ell(\wPPrime)=\ellwP-\ellwPPrime$. In conclusion, for every $(i_1,\ldots,i_{\ellwP})\in\wPrimeSubExp$, we obtain the following term
\[
-(-a_{i_1})\cdots(-a_{i_{\ellwPrime}})\dChf_{r_{i_1}}\cdots\dChf_{r_{i_{\ellwPrime}}}(-1)^{\ellwPPrime}\dChf_{j_1}\cdots\dChf_{j_{\ellwPPrime}}\cdot\hwt[k] = (-1)^{\ellwP+1}a_{i_1}\cdots a_{i_{\ellwPrime}}\,\lwt[k]
\]
and we find that 
\[
\lan\bwo^{-1} u_-^{-1} \bwop^{-1}\bs_k\cdot\hwt[k],\hwt[k]\ran = (-1)^{\ellwP+1}\sum_{(i_j)\in\wPrimeSubExp} a_{i_1}\cdots a_{i_{\ellwPrime}},
\]
as we wanted to show.
\end{proof}
\noindent This concludes the last of the intermediate results for the proof of Theorem \ref{thm:Explicit_LP_LG_model}.
\begin{rem}
Note that at no point in the proof of Lemma \ref{lem:numerator_of_e_star_k} do we fix a reduced expression for~$\wPrime$, so if $(i_j),(i'_j)\in\wPrimeSubExp$, then we do \emph{not} necessarily have $r_{i_j} = r_{i'_j}$ for all $j$.
\end{rem}
\begin{rem}\label{rem:The_Case_ellwPPrime_equals_ellwP}
Note that if $\ellwPPrime=\ellwP$, we have $\wPPrime=\wP$. In this case, equation \eqref{eq:wop_sk} becomes
\[
\bwop^{-1}\bs_k\cdot\hwt[k] = -\ds_{q_m}\cdots\ds_{q_1}\ds_k\cdot\hwt[k] = -\dwP\cdot\hwt[k] = (-1)^{\ellwP+1} \dChf_{r_1}\cdots\dChf_{r_{\ellwP}}\cdot\hwt[k]=(-1)^{\ellwP+1}\lwt[k],
\]
Since $u_-\in\dunim$, we find that $u_-^{-1}$ acts trivially on this, so that 
\[
\lan\bwo^{-1} u_-^{-1} \bwop^{-1}\bs_k\cdot\hwt[k],\hwt[k]\ran = (-1)^{\ellwP+1}.
\]
Of course, $\wPPrime=\wP$ implies that $\wPrime=\wP(\wPPrime)^{-1}=1$. Thus, subexpressions of $\wPrime$ inside $\wP$ have zero length and there is only one such subexpression so we find $\wPrimeSubExp=\{\varnothing\}$. We conclude that $\sum_{(i_j)\in\wPrimeSubExp}\prod_{j=1}^\ellwPrime a_{r_{i_j}} = 1$, taking the empty product to be $1$. Thus,
\[
\lan\bwo^{-1} u_-^{-1} \bwop^{-1}\bs_k\cdot\hwt[k],\hwt[k]\ran = (-1)^{\ellwP+1}\sum_{(i_j)\in\wPrimeSubExp}\prod_{j=1}^\ellwPrime a_{r_{i_j}}
\]
also holds in case $\ellwPPrime=\ellwP$.
\end{rem}

\section{Reformulating the quantum term using quiver subsets}\label{sec:quiver_enumeration}
In the last two sections we proved that Theorem \ref{thm:Explicit_LP_LG_model} gives a local Laurent polynomial expression for the potential constructed by Rietsch in \cite{Rietsch_Mirror_Construction}. However, the drawback of the current expression is the effort required to find all the subexpressions of $\wPrime$ inside the fixed reduced expression of $\wP$. In this section we will use a quiver associated to $\wP$ to enumerate all these subexpressions. For this, we need to use the fact that both $\wP$ and $\wPrime$ are \emph{fully commutative}:
\begin{df}
An element $w\in\weyl$ is called \emph{fully commutative} if every reduced expression of $w$ can be obtained from a given reduced expression by commuting its factors.
\end{df}
\begin{lem}\label{lem:wP_and_wPrime_are_fully_commutative}
Both $\wP\in\cosets$ and $\wPrime\in\weyl$ are fully commutative.
\end{lem}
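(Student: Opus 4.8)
The plan is to prove full commutativity of $\wP$ and $\wPrime$ by exhibiting a sufficient criterion and checking it using the structure of the minuscule representation $\dfwtrep[k]$ already established in Lemma \ref{lem:action_of_weyl_elts}. The cleanest route uses a theorem of Stembridge: an element $w$ of a Weyl group is fully commutative if and only if no reduced expression contains a ``long braid'' $s_i s_j s_i \cdots$ of length $\ge 3$ as a consecutive subword (equivalently, $w$ avoids the relevant braid patterns). Since $\wP\in\cosets$ is the minimal coset representative of $\wo\weylp$ in a \emph{cominuscule} setting, I expect the key input to be that $\wP$ is a \emph{minuscule} element: $w(\dfwt[k])$ ranges bijectively over the weights of $\dfwtrep[k]$ as $w$ ranges over $\cosets$, which is exactly the content of Lemma \ref{lem:action_of_weyl_elts} \textbf{(iii)}.

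\textbf{Key steps.} First I would establish the case of $\wP$ directly from the representation theory. By Lemma \ref{lem:action_of_weyl_elts} \textbf{(iii)}, any word $\dChf_{i_1}\cdots\dChf_{i_j}\cdot\hwt[k]$ that is nonzero determines a reduced expression for a coset representative, and by \textbf{(iv)} any word reaching $\lwt[k]$ gives a reduced expression for $\wP$. Now suppose for contradiction that some reduced expression for $\wP$ contained a braid segment $s_a s_b s_a$ with $a_{ab}=-1$; translating through Corollary \ref{cor:Action_of_s_e_f}, the corresponding vector would be $\dChf_a\dChf_b\dChf_a\cdot v$ for some weight vector $v$, which vanishes by Theorem \ref{thm:Green_structure_minuscule_reps} \textbf{(iv)}. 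This contradicts the nonvanishing required to reach $\lwt[k]$, so no such braid appears and $\wP$ is fully commutative. Second, for $\wPrime = \wP(\wPPrime)^{-1}$, I would argue that a subword property is inherited: since $\ellwP = \ellwPrime + \ellwPPrime$ the expression $\wPrime \wPPrime = \wP$ is length-additive, so any reduced expression for $\wPrime$ appears as the initial segment of a reduced expression for $\wP$ (prepended to one for $\wPPrime$). Because $\wP$ admits no long-braid segment in \emph{any} reduced expression, neither can $\wPrime$, which inherits its reduced expressions as honest subwords; hence $\wPrime$ is fully commutative as well.

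\textbf{Main obstacle.} The delicate point is the second step: I must ensure that the subword relationship really transfers the ``no long braid'' condition, i.e.\ that \emph{every} reduced expression of $\wPrime$ (not just one) is free of braids. The safe formulation is to invoke that full commutativity is equivalent to the heap/poset being free of certain configurations, and that this property is closed under taking the ``left factor'' $\wPrime$ of a length-additive factorization $\wP = \wPrime\wPPrime$ with $\wP$ fully commutative. Concretely, any reduced word for $\wPrime$ extends to a reduced word for $\wP$; a braid move available within the $\wPrime$-part would be a braid move available within that extended word for $\wP$, contradicting full commutativity of $\wP$. The only thing to verify carefully is that commuting within the $\wPrime$-block never forces interaction with the $\wPPrime$-block, which follows from length-additivity. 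I would therefore spend the bulk of the write-up making this inheritance argument airtight, while the braid-avoidance for $\wP$ itself is an immediate consequence of Theorem \ref{thm:Green_structure_minuscule_reps} \textbf{(iv)} via the representation-theoretic dictionary of Corollary \ref{cor:Action_of_s_e_f}.
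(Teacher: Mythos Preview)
Your approach is correct but genuinely different from the paper's. The paper dispatches the lemma in two sentences by citing Stembridge directly: Theorem~6.1 of \cite{Stembridge_Fully_commutative_elements_of_Coxeter_groups} gives full commutativity of every element of $\cosets$ in the cominuscule setting, and Proposition~2.4 of the same reference says that any left (or right) factor of a fully commutative element is again fully commutative, which handles $\wPrime$.

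You instead reprove both facts from the representation theory already developed in the paper. For $\wP$ you use Lemma~\ref{lem:action_of_weyl_elts}\,(iv) to translate reduced expressions into nonvanishing chains $\dChf_{r_1}\cdots\dChf_{r_\ell}\cdot\hwt[k]$ and then invoke Theorem~\ref{thm:Green_structure_minuscule_reps}\,(iv) to kill any braid segment; for $\wPrime$ you argue by length-additivity that any reduced word for $\wPrime$ embeds as an initial segment of a reduced word for $\wP$. This is more self-contained and nicely illustrates that the minuscule machinery already encodes the combinatorics, at the cost of a longer write-up; the paper's citation is quicker and simultaneously yields full commutativity for \emph{all} of $\cosets$, not just $\wP$.

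One point to tighten: your braid-avoidance step is phrased for the simply-laced pattern $s_as_bs_a$ with $a_{ab}=-1$, but the cominuscule list includes odd quadrics and Lagrangian Grassmannians, where $\dg$ is non-simply-laced and the relevant braid has length $m_{ab}=4$. The fix is immediate---a length-$4$ braid $s_as_bs_as_b$ contains a consecutive $s_cs_ds_c$ with $a_{cd}=-1$ (since exactly one of $a_{ab},a_{ba}$ equals $-1$ across a double edge), so Theorem~\ref{thm:Green_structure_minuscule_reps}\,(iv) still forces vanishing---but you should say this explicitly, and also state Stembridge's criterion as ``no reduced expression contains a consecutive factor of length $m_{ij}$'' rather than ``length $\ge 3$''.
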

\begin{proof}
Full commutativity of $\wP$ (and in fact of every element of $\cosets$) follows from Theorem 6.1 of \cite{Stembridge_Fully_commutative_elements_of_Coxeter_groups}. Full commutativity of $\wPrime$ is now a consequence of the full commutativity of $\wP$ due to Proposition 2.4 of \cite{Stembridge_Fully_commutative_elements_of_Coxeter_groups}, which states that every element in $W$ obtained from a fully commutative element by removing simple reflections at the right (or left) is fully commutative itself.
\end{proof}

In \cite{CMP_Quantum_cohomology_of_minuscule_homogeneous_spaces} a quiver is associated to $\wP$ using the full commutativity property. This quiver is a modification of the quiver introduced in \cite{Perrin_Small_resolutions_of_minuscule_Schubert_varieties}. 
It is defined as follows:
\begin{df}[\cite{CMP_Quantum_cohomology_of_minuscule_homogeneous_spaces}, Definition 2.1]\label{df:PerrinQuiver}
Given a fixed reduced expression $\wP=s_{r_1}\cdots s_{r_{\ell}}$, e.g.~the one fixed in equation \eqref{eq:Fixed_reduced_expression_for_wP_and_wop}.
\begin{itemize}
	\item For $\rtb\in\{\sr_1,\ldots,\sr_n\}$, let $\NumOcc(\rtb)$ be the number of occurrences of $s_\rtb$ in the reduced expression, i.e.~$\NumOcc(\rtb)=\#\{j~|~s_{r_j}=s_\rtb\}$.
	\item For $(\rtb,j)$ such that $1\le j \le\NumOcc(\rtb)$, let $\wPindex(\rtb,j)$ be the index of the $j$-th occurrence of $s_\rtb$ in the reduced expression (from left to right), i.e.~let $\wPindex(\rtb,j)$ be the index such that $s_{r_{\wPindex(\rtb,j)}}=s_\rtb$ and $\#\{\jtil\le\wPindex(\rtb,j)~|~s_{r_{\jtil}}=s_\rtb\}=j$. Also, set $\wPindex(\rtb,0)=0$ and $\wPindex(\rtb,\NumOcc(i)+1)=\infty$.
\end{itemize}
The quiver is now defined as follows:
\begin{itemize}
	\item Draw for the $j$-th occurrence of $s_\rtb$ in the reduced expression for $\wP$ a vertex labeled $(\rtb,j)$, i.e.~the vertices are $(\rtb,j)$ for $\rtb\in\{\sr_1,\ldots,\sr_n\}$ and $j\in\{1,\ldots,\NumOcc(\rtb)\}$.
	\item Draw an arrow from $(\rtb,j)$ to $({\rtb'},j')$ if $s_\rtb$ and $s_{\rtb'}$ do not commute and if the $j'$-th occurrence of $s_{\rtb'}$ is the is the first one to the right of the $j$-th occurrence of $s_\rtb$ in the reduced expression for $\wP$ and it occurs before the $(j+1)$-th occurrence of $s_\rtb$, i.e.~draw an arrow from $(\rtb,j)$ to $({\rtb'},j')$ if $(s_\rtb s_{\rtb'})^2\neq1$ and $\wPindex({\rtb'},j'-1)<\wPindex(\rtb,j)<\wPindex({\rtb'},j')<\wPindex(\rtb,j+1)$.
\end{itemize}
We will denote the resulting quiver by $\PerrinQ_{\X}$.
\end{df}
Note that the resulting quiver does not depend on the reduced expression for $\wP$ as it is fully commutative; it suffices to check that the quiver is the same after commuting two simple reflections. 
\begin{rem}
In Definition \ref{df:PerrinQuiver}, we associate the quiver $\PerrinQ_\X$ to the variety $\X=\G/\P_k$. However, as the Laurent polynomial $\potZo$ is defined on $\opendecomps\subset\dG$, it would be more proper to associate the quiver to the variety $\dP_k\backslash\dG$ (the left-quotient of $\dG$ by $\dP_k$). This variety has not played a role here, but is closely related to the variety $\mX$ of equation \eqref{eq:Richardson_var} and Theorem \ref{thm:Lie-theoretic_LG_model}. In fact, in the articles \cite{Rietsch_Marsh_Grassmannians,Pech_Rietsch_Odd_Quadrics,Pech_Rietsch_Williams_Quadrics,Pech_Rietsch_Lagrangian_Grassmannians} it is shown that $\mX$ is isomorphic to a subvariety of $\dP_k\backslash\dG$ and an expression for the pull-back of $\pot:\mX\times\invdtorus\to\C$ (see Definition \ref{df:potential}) to $\dP_k\backslash\dG$ is given. It is expected that such an isomorphic subvariety exists in general. For now, however, we note that $\G$ and $\dG$ have the same Weyl groups $\weyl$ and $\weylp=\lan s_i~|~i\neq k\ran\subset\weyl$ as these only depend on the Coxeter diagram underlying the Dynkin diagram of $\dG$ and the vertex $k$, so that $\wP$ and the associated quivers are actually the same. Thus, we will continue to write $\PerrinQ_\X$ even though it would be more proper to write $\PerrinQ_{\dP_k\backslash\dG}$.
\end{rem}

This quiver has as vertices the factors of the reduced expression of $\wP$, so every subexpression of $\wPrime$ inside the reduced expression will become a subset of vertices of this quiver.
\begin{df}\label{df:wPrimeSubsets}
We denote the set of subsets of vertices of the quiver $\PerrinQ_{\X}$ that are associated to reduced subexpressions of $\wPrime$ inside the reduced expression of $\wP$ fixed in section \ref{sec:notation} by $\wPrimeSubsets$. In other words,
\[
\wPrimeSubsets = \Bigl\{\bigl((\rtb_1,j_1),\ldots,(\rtb_\ellwPrime,j_\ellwPrime)\bigr)~\Big|~s_{\rtb_1}\cdots s_{\rtb_{\ellwPrime}} = \wPrime\Bigr\},
\]
where $\ellwPrime=\ell(\wPrime)$. Note that we implicitly order the vertices $(\rtb_1,j_1),\ldots,(\rtb_\ellwPrime,j_\ellwPrime)$ such that $\wPindex(\rtb_1,j_1)<\ldots<\wPindex(\rtb_\ellwPrime,j_\ellwPrime)$, but we will still refer to the elements of $\wPrimeSubsets$ as \emph{subsets}.\footnote{This is to distinguish them from the \emph{sequences} of subindices that are the elements of $\wPrimeSubExp$ from Definition \ref{df:wPrimeSubExp}.}
\end{df}
\begin{rem}\label{rem:Bijection_between_Subsets_and_Subsequences}
Recall from Definition \ref{df:wPrimeSubExp} that we defined $\wPrimeSubExp$ as the set of sequences of subindices $(i_1,\ldots,i_{\ellwPrime})$ such that $s_{r_{i_1}}\cdots s_{r_{i_\ellwPrime}}=\wPrime$ is a reduced subexpression of $\wPrime$ in the fixed reduced expression $\wP=s_{r_1}\cdots s_{r_\ellwP}$. Note that $\wPindex$ gives rise to a bijection between $\wPrimeSubsets$ and $\wPrimeSubExp$, which we will also denote by $\wPindex$:
\begin{equation}
\wPindex:\bigl((\rtb_1,j_1),\ldots,(\rtb_\ellwPrime,j_\ellwPrime)\bigr)\mapsto\bigl(\wPindex(\rtb_1,j_1),\ldots,\wPindex(\rtb_\ellwPrime,j_\ellwPrime)\bigr),
\label{eq:Bijection_Subsets_to_Subsequences}
\end{equation}
This bijection gives the translation between the subsets of vertices of the quiver $\PerrinQ_{\X}$ (in $\wPrimeSubsets$) and their associated reduced subexpressions (in $\wPrimeSubExp$).
\end{rem}

\begin{ex}\label{ex:MiniGrassmannian}
To illustrate the quiver and the subsets associated to the subexpressions, consider the example of the Grassmannian $\X=\Gr(4,6)=\SL_6/\P_4$ of type $\LGA_5$. Fixing for $\wP$ the reduced expression $\wP=(s_2s_3s_4s_5)(s_1s_2s_3s_4)$, we find that $\PerrinQ_X$ is of the form below. Here all edges are arrows are pointing downwards and the $i$-th column of vertices contains the vertices $(\sr_i,j)$ with $j$ increasing from $1$ to $m^P(\sr_i)$ from top to bottom. Above the quiver, we have drawn the labeled Coxeter diagram in such a way that the $i$-th vertex of the diagram is above the $i$-th column of the quiver. The vertex labeled $4$ is marked in the Coxeter diagram to signify that we are considering $\X=\SL_6/\P_4$, i.e.~to signify that $k=4$ in $\X=\G/\P_k$. For each vertex $(\rtb,j)$, we also give the value of $\wPindex(\rtb,j)$.
\[
\begin{tikzpicture}[scale = .4, style= very thick]
	\draw[gray]
	(-1,3) -- (3,3);
	\draw[gray, fill=gray]
	(-1,3) circle (.15)
	(0,3) circle (.15)
	(1,3) circle (.15)
	(3,3) circle (.15);
	\draw[gray, fill=white]
	(2,3) circle (.15);
	\draw[gray] 
	(-1,3) node [above]{\tiny$1$}
	(0,3) node [above]{\tiny$2$}
	(1,3) node [above]{\tiny$3$}
	(2,3) node [above]{\tiny$4$}
	(3,3) node [above]{\tiny$5$};
	\draw 
	(0,2) -- (3,-1) -- (2,-2) -- (-1,1) -- (0,2)
	(1,1) -- (0,0) 	(2,0) -- (1,-1);
	\draw[black, fill=black] 
	(0,2) circle (.15)
	(1,1) circle (.15)
	(2,0) circle (.15)
	(3,-1) circle (.15)
	(2,-2) circle (.15)
	(1,-1) circle (.15)
	(0,0) circle (.15)
	(-1,1) circle (.15);
	\draw
	(0,2) node [right]{\tiny$1$}
	(1,1) node [right]{\tiny$2$}
	(2,0) node [right]{\tiny$3$}
	(3,-1) node [right]{\tiny$4$}
	(2,-2) node [right]{\tiny$8$}
	(1,-1) node [right]{\tiny$7$}
	(0,0) node [right]{\tiny$6$}
	(-1,1) node [right]{\tiny$5$};
\end{tikzpicture}
\]
The reduced expression $\wPrime=s_2s_3s_4$ is unique, and the subexpressions $(\textcolor{red}{\underline{\boldsymbol{s_2s_3s_4}}}s_5)(s_1s_2s_3s_4)$, $(\textcolor{red}{\underline{\boldsymbol{s_2s_3}}}s_4s_5)(s_1s_2s_3\textcolor{red}{\underline{\boldsymbol{s_4}}})$, $(\textcolor{red}{\underline{\boldsymbol{s_2}}}s_3s_4s_5)(s_1s_2\textcolor{red}{\underline{\boldsymbol{s_3s_4}}})$, and $(s_2s_3s_4s_5)(s_1\textcolor{red}{\underline{\boldsymbol{s_2s_3s_4}}})$ correspond in the quiver $\PerrinQ_{\X}$ to marking the vertices
\[
\begin{tikzpicture}[scale = .4]
	\draw
	(0,2) -- (3,-1) -- (2,-2) -- (-1,1) -- (0,2)
	(1,1) -- (0,0) 	(2,0) -- (1,-1);
	\draw[red,style = very thick] 
	(0,2) circle (.3)
	(1,1) circle (.3)
	(2,0) circle (.3);
\end{tikzpicture}
\qquad
\begin{tikzpicture}[scale = .4]
	\draw
	(0,2) -- (3,-1) -- (2,-2) -- (-1,1) -- (0,2)
	(1,1) -- (0,0) 	(2,0) -- (1,-1);
	\draw[red,style = very thick] 
	(0,2) circle (.3)
	(1,1) circle (.3)
	(2,-2) circle (.3);
\end{tikzpicture}
\qquad
\begin{tikzpicture}[scale = .4]
	\draw
	(0,2) -- (3,-1) -- (2,-2) -- (-1,1) -- (0,2)
	(1,1) -- (0,0) 	(2,0) -- (1,-1);
	\draw[red,style = very thick] 
	(0,2) circle (.3)
	(1,-1) circle (.3)
	(2,-2) circle (.3);
\end{tikzpicture}
\qquad
\begin{tikzpicture}[scale = .4]
	\draw
	(0,2) -- (3,-1) -- (2,-2) -- (-1,1) -- (0,2)
	(1,1) -- (0,0) 	(2,0) -- (1,-1);
	\draw[red,style = very thick] 
	(0,0) circle (.3)
	(1,-1) circle (.3)
	(2,-2) circle (.3);
\end{tikzpicture}
\]
respectively, where we suppressed the vertices.
\end{ex}

It turns out that we can use the quiver $\PerrinQ_{\X}$ to find all the reduced subexpressions of $\wPrime$ in $\wP$ using two operations that are straightforward when considered as operations on subsets of the quiver. First, we need the following observation:
\begin{rem}
From Definition \ref{df:PerrinQuiver}, it is clear that we can only have an arrow $(i,j)\to(i',j')$ if $\wPindex(i,j)<\wPindex(i',j')$, so that any path $(i,j)\to(i_1,j_1)\to\ldots\to(i',j')$ between the two vertices correspond to simple reflections $s_{r_{\wPindex(i,j)}}$, $s_{r_{\wPindex(i_1,j_1)}}$, \ldots, $s_{r_{\wPindex(i',j')}}$ that appear in that order in the reduced expression of $\wP$.
\end{rem}
The two operations are given as follows:
\begin{lem}\label{lem:legal_subexpression_moves}
Suppose $(i_1,\ldots,i_{\ellwPrime})\in\wPrimeSubExp$, i.e.~suppose it is a sequence of subindices such that $s_{r_{i_1}}\cdots s_{r_{i_\ellwPrime}} = \wPrime$ is a reduced subexpression inside the reduced expression $\wP=s_{r_1}\cdots s_{r_\ellwP}$. Denote by $S\in\wPrimeSubsets$ the corresponding subset obtained by (the inverse of) the bijection in \eqref{eq:Bijection_Subsets_to_Subsequences}.
\begin{romanize}
\item For every $\itil$ such that $r_{\itil}=r_{i_j}$ and $i_{j-1}<\itil<i_{j+1}$, the sequence of subindices obtained by replacing $i_j$ with $\itil$, i.e.~$(i_1,\ldots,i_{j-1},\itil,i_{j+1},\ldots,i_\ellwPrime)$, also gives a reduced subexpression of $\wPrime$ in $\wP$. 

In terms of subsets in $\wPrimeSubsets$, this says that we can replace a vertex $(\rtb,j)\in S$ with a vertex $(\rtb,\jtil)$ if every $(\rtb',j')$ with $\wPindex(\rtb,j)<\wPindex(\rtb',j')<\wPindex(\rtb,\jtil)$ is \emph{not} an element of $S$ when $j<\jtil$ (or $\wPindex(\rtb,j)>\wPindex(\rtb',j')>\wPindex(\rtb,\jtil)$ when $j>\jtil$). 

Examples of this operation are the following (using the conventions of Example \ref{ex:MiniGrassmannian}):
\[
\begin{tikzpicture}[scale = .4,baseline=0em]
	\draw[gray, very thick] 	(-1,3) -- (3,3);
	\draw[gray, very thick, densely dotted] (-1.5,3) -- (-1,3)		(3.5,3) -- (3,3);
	\draw[gray, very thick, fill=gray]
	(-1,3) circle (.15)
	(0,3) circle (.15)
	(1,3) circle (.15)
	(2,3) circle (.15)
	(3,3) circle (.15);
	\draw 
	(0,2) -- (3,-1) -- (2,-2) -- (-1,1) -- (0,2)
	(1,1) -- (0,0) 	(2,0) -- (1,-1);
	\draw[red,style = very thick]
	(0,2) circle (.3)
	(1,1) circle (.3)
	(2,-2) circle (.3);
	\node at (0,2) [right=.3em]{$i_{j-1}$};
	\node at (1,1) [right=.3em]{$i_{j}$};
	\node at (2,-2) [left=.3em]{$i_{j+1}$};
\end{tikzpicture}
\mapsto
\begin{tikzpicture}[scale = .4,baseline=0em]
	\draw[gray, very thick] 	(-1,3) -- (3,3);
	\draw[gray, very thick, densely dotted] (-1.5,3) -- (-1,3)		(3.5,3) -- (3,3);
	\draw[gray, very thick, fill=gray]
	(-1,3) circle (.15)
	(0,3) circle (.15)
	(1,3) circle (.15)
	(2,3) circle (.15)
	(3,3) circle (.15);
	\draw
	(0,2) -- (3,-1) -- (2,-2) -- (-1,1) -- (0,2)
	(1,1) -- (0,0) 	(2,0) -- (1,-1);
	\draw[red, style = very thick]
	(0,2) circle (.3)
	(1,-1) circle (.3)
	(2,-2) circle (.3);
	\node at (0,2) [right=.3em]{$i_{j-1}$};
	\node at (1,-1) [left=.3em]{$\itil$};
	\node at (2,-2) [left=.3em]{$i_{j+1}$};
\end{tikzpicture} \qquad \begin{tikzpicture}[scale=.4,baseline=0em]
	\draw[gray, very thick] 	(-1,4) -- (1,4);
	\draw[gray, thick, double] (1,4) -- (2,4);
	\draw[gray, very thick, densely dotted] (-1.5,4) -- (-1,4);
	\draw[gray, very thick, fill=gray]
	(-1,4) circle (.15)
	(0,4) circle (.15)
	(1,4) circle (.15)
	(2,4) circle (.15);
	\draw (-1,3) -- (2,0) -- (-1,-3);
	\draw[red,style=very thick] (-1,3) circle (.3);
	\draw[red,style=very thick] (0,2) circle (.3);
	\draw[red,style=very thick] (-1,-3) circle (.3);
	\node at (-1,3) [right=.3em]{$i_{j-1}$};
	\node at (0,2) [right=.3em]{$i_{j}$};
	\node at (-1,-3) [right=.3em]{$i_{j+1}$};
\end{tikzpicture} 
\mapsto 
\begin{tikzpicture}[scale=.4,baseline=0em]
	\draw[gray, very thick] 	(-1,4) -- (1,4);
	\draw[gray, thick, double] (1,4) -- (2,4);
	\draw[gray, very thick, densely dotted] (-1.5,4) -- (-1,4);
	\draw[gray, very thick, fill=gray]
	(-1,4) circle (.15)
	(0,4) circle (.15)
	(1,4) circle (.15)
	(2,4) circle (.15);
	\draw (-1,3) -- (2,0) -- (-1,-3);
	\draw[red,style=very thick] (-1,3) circle (.3);
	\draw[red,style=very thick] (0,-2) circle (.3);
	\draw[red,style=very thick] (-1,-3) circle (.3);
	\node at (-1,3) [right=.3em]{$i_{j-1}$};
	\node at (0,-2) [right=.3em]{$\itil$};
	\node at (-1,-3) [right=.3em]{$i_{j+1}$};
\end{tikzpicture}\qquad \begin{tikzpicture}[scale=.4,baseline=0em]
	\draw[gray, very thick] 	(-2,4) -- (2,4) (0,4) -- (1,3.7);
	\draw[gray, very thick, densely dotted] (-2.5,4) -- (-2,4)		(2.5,4) -- (2,4);
	\draw[gray, very thick, fill=gray]
	(-2,4) circle (.15)
	(-1,4) circle (.15)
	(0,4) circle (.15)
	(1,3.7) circle (.15)
	(2,4) circle (.15);
	\draw (0,2) -- (-2,0) -- (0,-2) -- (2,-1) -- (0,0) -- (1,1) -- (0,2);
	\draw (2,3) -- (0,2)  (0,-2) -- (1,-3);
	\draw (-1,1) -- (0,0) -- (-1,-1);
	\draw[red,very thick] (2,3) circle (.3);
	\draw[red,very thick] (0,-2) circle (.3);
	\draw[red,very thick] (1,-3) circle (.3);
	\node at (2,3) [left=.3em]{$i_{j-1}$}; 
	\node at (0,-2) [left=.3em]{$i_j$}; 
	\node at (1,-3) [left=.3em]{$i_{j+1}$}; 
\end{tikzpicture} \mapsto \begin{tikzpicture}[scale=.4,baseline=0em]
	\draw[gray, very thick] 	(-2,4) -- (2,4) (0,4) -- (1,3.7);
	\draw[gray, very thick, densely dotted] (-2.5,4) -- (-2,4)		(2.5,4) -- (2,4);
	\draw[gray, very thick, fill=gray]
	(-2,4) circle (.15)
	(-1,4) circle (.15)
	(0,4) circle (.15)
	(1,3.7) circle (.15)
	(2,4) circle (.15);
	\draw (0,2) -- (-2,0) -- (0,-2) -- (2,-1) -- (0,0) -- (1,1) -- (0,2);
	\draw (2,3) -- (0,2)  (0,-2) -- (1,-3);
	\draw (-1,1) -- (0,0) -- (-1,-1);
	\draw[red,very thick] (2,3) circle (.3);
	\draw[red, very thick] (0,2) circle (.3);
	\draw[red,very thick] (1,-3) circle (.3);
	\node at (2,3) [left=.3em]{$i_{j-1}$}; 
	\node at (0,2) [left=.3em]{$\itil$}; 
	\node at (1,-3) [left=.3em]{$i_{j+1}$}; 
\end{tikzpicture}
\]
\item For every $\itil<i_j$ with $r_{\itil}=r_{i_j}$ such that there exists a $j'$ with $i_{j'}<\itil<i_{j'+1}\le i_j$ and $(s_{r_{i_j}}s_{r_{i_{\jtil}}})^2=1$ for all $\jtil\in\{j'+1,\ldots,j-1\}$, we have that the sequence of subindices $(i_1,\ldots,i_{j'},\itil,i_{j'+1},\ldots,i_{j-1},i_{j+1},\ldots,i_\ellwPrime)$ is an element of $\wPrimeSubExp$ as well.

Similarly, for every $\itil>i_j$ with $r_{\itil}=r_{i_j}$ such that there exists a $j'$ with $i_j\le i_{j'}<\itil<i_{j'+1}$ and $(s_{r_{i_j}}s_{r_{i_{\jtil}}})^2=1$ for all $\jtil\in\{j+1,\ldots, j'\}$, the sequence of subindices $(i_1,\ldots,i_{j-1},i_{j+1},\ldots,i_{j'},\itil,i_{j'+1},\ldots,i_\ellwPrime)$ is also an element of $\wPrimeSubExp$.

In terms of subsets in $\wPrimeSubsets$, this says that we can replace a vertex $(\rtb,j)\in S$ with a vertex $(\rtb,\jtil)$ if for every path $(\rtb,j)\to(\rtb_1,j_1)\to\ldots\to(\rtb,\jtil)$ when $j<\jtil$ (or $(\rtb,\jtil)\to(\rtb_1,j_1)\to\ldots\to(\rtb,j)$ when $j>\jtil$ respectively) there is no vertex $(\rtb',j')\in S$ contained in the path such that $(s_\rtb s_{\rtb'})^2\neq1$. Examples of this operation are:
\[
\begin{tikzpicture}[scale = .4, baseline=0em]
	\draw[gray, very thick] 	(-1,3) -- (3,3);
	\draw[gray, very thick, densely dotted] (-1.5,3) -- (-1,3)		(3.5,3) -- (3,3);
	\draw[gray, very thick, fill=gray]
	(-1,3) circle (.15)
	(0,3) circle (.15)
	(1,3) circle (.15)
	(2,3) circle (.15)
	(3,3) circle (.15);
	\draw (0,2) -- (3,-1) -- (2,-2) -- (-1,1) -- (0,2);
	\draw (1,1) -- (0,0); 	\draw (2,0) -- (1,-1);
	\draw[red, very thick]
	(0,2) circle (.3)
	(1,-1) circle (.3)
	(-1,1) circle (.3)
	(3,-1) circle (.3)
	(2,-2) circle (.3);
	\node at (0,2) [right=.3em]{$i_{j'}$};
	\node at (3,-1) [right=.3em]{$i_{j'+1}$};
	\node at (-1,1) [left=.3em]{$i_{j-1}$};
	\node at (1,-1) [left=.3em]{$i_j$};
	\node at (2,-2) [right=.3em]{$i_{j+1}$};
\end{tikzpicture}
\mapsto
\begin{tikzpicture}[scale = .4, baseline=0em]
	\draw[gray, very thick] 	(-1,3) -- (3,3);
	\draw[gray, very thick, densely dotted] (-1.5,3) -- (-1,3)		(3.5,3) -- (3,3);
	\draw[gray, very thick, fill=gray]
	(-1,3) circle (.15)
	(0,3) circle (.15)
	(1,3) circle (.15)
	(2,3) circle (.15)
	(3,3) circle (.15);
	\draw (0,2) -- (3,-1) -- (2,-2) -- (-1,1) -- (0,2);
	\draw (1,1) -- (0,0); 	\draw (2,0) -- (1,-1);
	\draw[red, very thick]
	(0,2) circle (.3)
	(1,1) circle (.3)
	(-1,1) circle (.3)
	(3,-1) circle (.3)
	(2,-2) circle (.3);
	\node at (0,2) [right=.3em]{$i_{j'}$};
	\node at (3,-1) [right=.3em]{$i_{j'+1}$};
	\node at (-1,1) [left=.3em]{$i_{j-1}$};
	\node at (1,1) [right=.3em]{$\itil$};
	\node at (2,-2) [right=.3em]{$i_{j+1}$};
\end{tikzpicture}
\quad 
\begin{tikzpicture}[scale=.4,baseline=0em]
	\draw[gray, very thick] 	(-1,4) -- (1,4);
	\draw[gray, thick, double] (1,4) -- (2,4);
	\draw[gray, very thick, densely dotted] (-1.5,4) -- (-1,4);
	\draw[gray, very thick, fill=gray]
	(-1,4) circle (.15)
	(0,4) circle (.15)
	(1,4) circle (.15)
	(2,4) circle (.15);
	\draw (-1,3) -- (2,0) -- (-1,-3);
	\draw[red, very thick]
	(2,0) circle (.3)
	(0,2) circle (.3);
	\draw[red,style=very thick] (-1,3) circle (.3);
	\draw[red,style=very thick] (-1,-3) circle (.3);
	\node at (-1,3) [right=.3em]{$i_{j-1}$};
	\node at (0,2) [right=.3em]{$i_{j}$};
	\node at (2,0) [right=.3em]{$i_{j'}$};
	\node at (-1,-3) [right=.3em]{$i_{j'+1}$};
\end{tikzpicture} 
\mapsto
\begin{tikzpicture}[scale=.4, baseline=0em]
	\draw[gray, very thick] 	(-1,4) -- (1,4);
	\draw[gray, thick, double] (1,4) -- (2,4);
	\draw[gray, very thick, densely dotted] (-1.5,4) -- (-1,4);
	\draw[gray, very thick, fill=gray]
	(-1,4) circle (.15)
	(0,4) circle (.15)
	(1,4) circle (.15)
	(2,4) circle (.15);
	\draw (-1,3) -- (2,0) -- (-1,-3);
	\draw[red, very thick]
	(2,0) circle (.3);
	\draw[red,style=very thick] (-1,3) circle (.3);
	\draw[red,style=very thick] (0,-2) circle (.3);
	\draw[red,style=very thick] (-1,-3) circle (.3);
	\node at (-1,3) [right=.3em]{$i_{j-1}$};
	\node at (2,0) [right=.3em]{$i_{j'}$};
	\node at (0,-2) [right=.3em]{$\itil$};
	\node at (-1,-3) [right=.3em]{$i_{j'+1}$};
\end{tikzpicture}
\]
\end{romanize}
\end{lem}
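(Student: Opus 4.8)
The plan is to establish the two operations at the level of the index sequences in $\wPrimeSubExp$, and then to transport the statements to subsets of $\PerrinQ_\X$ through the bijection $\wPindex$ of Remark~\ref{rem:Bijection_between_Subsets_and_Subsequences}. Both operations are instances of a single principle: we replace the chosen occurrence at position $i_j$ of the simple reflection $s_{\rtb}$ (where $\rtb=r_{i_j}$) by another occurrence at position $\itil$ of the \emph{same} reflection, since $r_{\itil}=\rtb$. This leaves the multiset of selected simple reflections unchanged, so only two things must be checked: that the new index sequence is still strictly increasing, and that the product taken in the new position order still equals $\wPrime$ (reducedness is then automatic, the length being unchanged and equal to $\ell(\wPrime)$).

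For part \textbf{(i)}, the hypothesis $i_{j-1}<\itil<i_{j+1}$ keeps the relabelled factor between the same two neighbours, so the position order of the factors is unaffected; as $s_{r_{\itil}}=s_{r_{i_j}}$ the word is literally unchanged and hence still equals $\wPrime$ and is reduced. Strict monotonicity of $(i_1,\ldots,i_{j-1},\itil,i_{j+1},\ldots,i_\ellwPrime)$ is exactly the hypothesis, so this sequence lies in $\wPrimeSubExp$. This is the degenerate case of part \textbf{(ii)} in which no other selected factor is crossed.

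For part \textbf{(ii)} with $\itil<i_j$, inserting the occurrence at $\itil$ between positions $i_{j'}$ and $i_{j'+1}$ and deleting the one at $i_j$ moves $s_\rtb$ to the left past precisely the selected factors $s_{r_{i_{j'+1}}},\ldots,s_{r_{i_{j-1}}}$, i.e.\ over the indices $\jtil\in\{j'+1,\ldots,j-1\}$. In position order the new word differs from $\wPrime=s_{r_{i_1}}\cdots s_{r_{i_\ellwPrime}}$ only by commuting $s_\rtb$ to the front of this block, and the hypothesis $(s_\rtb s_{r_{i_{\jtil}}})^2=1$ for all such $\jtil$ makes every one of these transpositions a genuine commutation; hence the product is unchanged and still equals $\wPrime$. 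Strict monotonicity of $(i_1,\ldots,i_{j'},\itil,i_{j'+1},\ldots,i_{j-1},i_{j+1},\ldots,i_\ellwPrime)$ follows from $i_{j'}<\itil<i_{j'+1}$ together with the original ordering, so the new sequence lies in $\wPrimeSubExp$. The case $\itil>i_j$ is symmetric, moving $s_\rtb$ rightward over $s_{r_{i_{j+1}}},\ldots,s_{r_{i_{j'}}}$.

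Finally, the subset reformulations follow by applying the bijection $\wPindex$: a selected factor crossed in part \textbf{(ii)} corresponds to a vertex $(\rtb',j')\in S$ with $\wPindex$-value strictly between those of the two occurrences of $s_\rtb$, and $(s_\rtb s_{\rtb'})^2\neq1$ is exactly the condition under which such a vertex is joined to the block by arrows of $\PerrinQ_\X$, hence lies on a connecting path. I expect the genuine care to be needed here: verifying that the path condition on $\PerrinQ_\X$ transcribes the commutation condition faithfully relies on the heap structure of the quiver (that any non-commuting vertex lying between two occurrences of $s_\rtb$ actually sits on a directed path between them), which in turn rests on the full commutativity established in Lemma~\ref{lem:wP_and_wPrime_are_fully_commutative}. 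The index bookkeeping in part \textbf{(ii)}---matching the block $\{j'+1,\ldots,j-1\}$ to the factors actually crossed and confirming monotonicity---is the other point where one must be careful, though it is ultimately routine.
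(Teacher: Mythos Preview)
Your argument is correct and follows essentially the same approach as the paper: for \textbf{(i)} the word is literally unchanged so only monotonicity needs checking, and for \textbf{(ii)} the commutation hypothesis lets you slide $s_\rtb$ past the intervening selected factors, with monotonicity supplied by $i_{j'}<\itil<i_{j'+1}$. The paper's proof is in fact slightly terser than yours and does not attempt to justify the quiver-subset reformulation at all; your closing paragraph correctly identifies that translating the commutation condition into the path condition on $\PerrinQ_\X$ is where the heap structure would enter, but like the paper you may safely leave this as a restatement rather than a separate claim to prove.
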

\begin{proof}
\textbf{(i)} By assumption, $(i_1,\ldots,i_{j-1},\itil,i_{j+1},\ldots,i_\ellwPrime)$ is an increasing sequence of subindices such that $s_{r_{i_1}}\cdots s_{r_{i_{j-1}}} s_{r_{\itil}} s_{r_{i_{j+1}}}\cdots s_{r_{i_\ellwPrime}} = s_{r_{i_1}}\cdots s_{r_{i_{j-1}}} s_{r_{i_j}} s_{r_{i_{j+1}}}\cdots s_{r_{i_\ellwPrime}} = \wPrime$, and therefore a reduced expression of $\wPrime$ in $\wP$ as well.

\textbf{(ii)} If $\itil<i_j$, we can commute the factor $s_{r_{i_j}}$ to the left in the given reduced subexpression to obtain the reduced expression $s_{r_{i_1}}\cdots s_{r_{i_{j'}}}s_{r_{i_j}}s_{r_{i_{j'+1}}}\cdots s_{r_{i_{j-1}}}s_{r_{i_{j+1}}}\cdots s_{r_{i_\ellwPrime}}$ for $\wPrime$. Moreover, this is a subexpression of $\wPrime$ in $\wP$ since there is a $\itil$ with $i_{j'}<\itil<i_{j'+1}$ and $r_{\itil}=r_{i_j}$, so that $(i_1,\ldots, i_{j'},\itil,i_{j'+1},\ldots,i_{j-1},i_j,\ldots,i_\ellwPrime)$ is an increasing sequence of subindices.

The case $\itil>i_j$ is analogous, except that we commute the factor $s_{r_{i_j}}$ to the right.
\end{proof}
\begin{rem}
Note that the operation \textbf{(i)} is actually a special case of the operation \textbf{(ii)} where no commutation takes place.
\end{rem}
It turns out that these operations suffice to obtain \emph{all} the reduced subexpressions of $\wPrime$ in $\wP$. To show this, we introduce a total order on the set of reduced subexpressions by taking the lexicographical order $\lex$ on $\wPrimeSubExp$. In other words, we have $(i_1,\ldots,i_\ellwPrime)\lex(i'_1,\ldots,i'_\ellwPrime)$ if and only if there exists a $j$ such that $i_j < i'_j$ and $i_{j'}=i'_{j'}$ for $j'\in\{1,\ldots,j-1\}$. Let $(\ihat_1,\ldots,\ihat_\ellwPrime)$ be the minimal sequence associated to a reduced subexpression of $\wPrime$ in $\wP$.

\begin{prop}
Every reduced subexpression of $\wPrime$ in $\wP$ can be obtained using the operations of Lemma \ref{lem:legal_subexpression_moves} on the minimal reduced subexpression $\wPrime=s_{r_{\ihat_1}}\cdots s_{r_{\ihat_\ellwPrime}}$.
\end{prop}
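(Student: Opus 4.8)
The plan is to show that $\wPrimeSubExp$ is connected under the operations of Lemma~\ref{lem:legal_subexpression_moves}, with the lexicographically minimal subexpression as a base point. I would first record that each operation is \emph{reversible}: operation~\textbf{(i)} is symmetric in $i_j$ and $\itil$, and the ``move left'' and ``move right'' versions of operation~\textbf{(ii)} are mutually inverse, since the commutation condition $(s_\rtb s_{\rtb'})^2=1$ is symmetric. Hence ``connected by a sequence of operations'' is an equivalence relation on $\wPrimeSubExp$, and it suffices to prove that every $(i_1,\ldots,i_\ellwPrime)\in\wPrimeSubExp$ can be transformed into the minimal element $(\ihat_1,\ldots,\ihat_\ellwPrime)$. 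I would do this by producing, for each non-minimal subexpression, a single operation that strictly decreases it in the order $\lex$; concretely, an operation that makes the first index of disagreement with $(\ihat_j)$ strictly later, so that the process terminates at $(\ihat_j)$ after at most $\ellwPrime$ steps.

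For the inductive step, suppose $(i_j)\neq(\ihat_j)$ and let $j$ be the first index with $i_j\neq\ihat_j$; minimality forces $i_j>\ihat_j$, while $i_{j'}=\ihat_{j'}$ for $j'<j$. Cancelling the common prefix, the two suffixes give reduced words $s_{r_{i_j}}\cdots s_{r_{i_\ellwPrime}}=s_{r_{\ihat_j}}\cdots s_{r_{\ihat_\ellwPrime}}$ for a common element $v$, which is fully commutative because it is obtained from the fully commutative $\wPrime$ (Lemma~\ref{lem:wP_and_wPrime_are_fully_commutative}) by deleting a prefix. Both $a:=r_{i_j}$ and $\hat a:=r_{\ihat_j}$ are then left descents of $v$. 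If $a=\hat a$, then $\ihat_{j-1}=i_{j-1}<\ihat_j<i_j<i_{j+1}$ with $r_{\ihat_j}=a=r_{i_j}$, so operation~\textbf{(i)} replaces $i_j$ by $\ihat_j$ and gives a subexpression agreeing with $(\ihat_j)$ on a longer prefix. If $a\neq\hat a$, I would argue using the heap of $v$ (in the sense of Viennot and Stembridge~\cite{Stembridge_Fully_commutative_elements_of_Coxeter_groups}): reduced words are the linear extensions of this poset, any two non-commuting occurrences are comparable, and a generator is a left descent exactly when its first occurrence is a minimal element of the heap. The minimal occurrences of $a$ and of $\hat a$ are thus both minimal in the heap, hence incomparable, hence $s_a$ and $s_{\hat a}$ commute; and in the suffix word the first occurrence of $\hat a$, say at the index $i_{j''}$ with $j''\ge j$, is precisely this minimal occurrence, so every preceding letter $r_{i_j},\ldots,r_{i_{j''-1}}$ is incomparable to it and therefore commutes with $\hat a$. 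These are exactly the hypotheses of the ``move left'' case of operation~\textbf{(ii)} applied to the index $i_{j''}$ with target $\ihat_j$ (taking $j'=j-1$, since $i_{j-1}<\ihat_j<i_j\le i_{j''}$ and the commutation holds for $\jtil\in\{j,\ldots,j''-1\}$), which moves $i_{j''}$ down to $\ihat_j$ and again lengthens the agreeing prefix.

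The main obstacle is the case $a\neq\hat a$, and specifically the two heap facts used there: that two distinct left descents of a fully commutative element commute, and that in any reduced word all letters preceding the first occurrence of a left descent commute with it. Both follow from the characterisation of heaps of fully commutative elements, and I would either cite them from~\cite{Stembridge_Fully_commutative_elements_of_Coxeter_groups} or re-derive the commutation of distinct left descents directly: otherwise the alternating word $s_a s_{\hat a}s_a\cdots$ of length $m(a,\hat a)\ge 3$ would be a left prefix of a reduced word for $v$, exhibiting a braid relation and contradicting full commutativity. Once the strictly-decreasing step is available, induction terminates at $(\ihat_j)$, and reversing the resulting chain of operations — using the reversibility noted above — shows that every reduced subexpression of $\wPrime$ in $\wP$ is obtained from the minimal one, which proves the proposition.
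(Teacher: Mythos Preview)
Your proof is correct and follows essentially the same strategy as the paper's: reduce any non-minimal subexpression toward $(\ihat_j)$ by locating the first index of disagreement and applying operation \textbf{(i)} or \textbf{(ii)} according to whether the labels $r_{i_j}$ and $r_{\ihat_j}$ agree, then invoke reversibility. The only difference is in the justification of the commutation hypothesis in the $a\neq\hat a$ case: the paper argues informally that the intervening letters ``have commuted past'' $s_{\hat a}$ when passing between the two reduced words, while you phrase the same fact rigorously via the heap of the fully commutative suffix $v$, which is a cleaner way to say the same thing.
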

\begin{proof}
We will show that every non-minimal reduced subexpression can be made smaller using one of the operations in Lemma \ref{lem:legal_subexpression_moves}. This gives a sequence of operations from any given reduced subexpression to the minimal one. Since it is evident that each operation is invertible, we obtain the statement.

Therefore, let $(i_1,\ldots, i_{\ellwPrime})\in\wPrimeSubExp$ be non-minimal with respect to the lexicographical order. By definition, there exists a $j$ such that $i_j>\ihat_j$ and $i_{j'}=\ihat_{j'}$ for all $j'\in\{1,\ldots,j-1\}$. We distinguish two cases: $r_{i_j}=r_{\ihat_j}$ and $r_{i_j}\neq r_{\ihat_j}$.

In the case $r_{i_j}=r_{\ihat_j}$, we can apply operation \textbf{(i)} to directly obtain the reduced subexpression with subindices $(i_1,\ldots,i_{j-1},\ihat_j,i_{j+1},\ldots,i_\ellwPrime)\lex(i_1,\ldots,i_{j-1},i_j,i_{j+1},\ldots,i_{\ellwPrime})$.

Now, consider the case $r_{i_j}\neq r_{\ihat_j}$. Suppose that the simple reflection $s_{r_{\ihat_j}}$ occurs for the $N$-th time in the minimal reduced subexpression of $\wPrime$ in $\wP$, then there must exist some $j'>j$ such that $s_{r_{i_{j'}}}$ is the $N$-th occurrence of the same simple reflection in the subexpression $(i_1,\ldots i_\ellwPrime)$. Indeed, $\wPrime$ is fully commutative so that each simple reflection appears the same number of times in each reduced expression. (Note that we have $j'>j$ since $s_{r_{\ihat_{\jtil}}}=s_{r_{i_{\jtil}}}$ for $\jtil\in\{1,\ldots,j-1\}$ and $s_{r_{i_j}}\neq s_{r_{\ihat_j}}$.) We also know that the simple reflections $s_{r_{i_{\jtil}}}$ commute with $s_{r_{i_{j'}}}$ for $\jtil\in\{j,\ldots,j'-1\}$ because of full commutativity of $\wPrime$, since these simple reflections have commuted with $s_{r_{\ihat_j}}=s_{r_{i_{j'}}}$ going from $(\ihat_1,\ldots,\ihat_\ellwPrime)$ to $(i_1,\ldots,i_\ellwPrime)$. Thus, we can apply operation \textbf{(ii)} to obtain $(i_1,\ldots,i_{j-1},\ihat_j,i_j,\ldots,i_{j'-1},i_{j'+1},\ldots,i_\ellwPrime)\lex(i_1,\ldots, i_\ellwPrime)$ as a reduced subexpression.
\end{proof}

Combining this with Theorem \ref{thm:Explicit_LP_LG_model} and Remark \ref{rem:Bijection_between_Subsets_and_Subsequences}, we conclude the following:

\begin{cor}\label{cor:Explicit_LP_LG_model_with_quiver_subsets}
Let $\X=\G/\P$ be a \emph{cominuscule} complete homogeneous space with $\G$ a \emph{simply-connected}, simple, complex algebraic group and $\P=\P_k$ a (maximal) parabolic subgroup. The~restriction $\potZo$ of $\potZ$ to $\opendecomps$ has the following Laurent polynomial expression:
\begin{equation}
\potZo(z) = \sum_{i=1}^\ellwP a_i + q\frac{\sum_{S\in\wPrimeSubsets}\prod_{(\rtb,j)\in S} a_{\wPindex(\rtb,j)}}{\prod_{i=1}^\ellwP a_i}.
\label{eq:LP_expression_for_the_potential_using_quiver_subsets}
\end{equation}
Here $z\in\opendecomps$ is uniquely decomposed as $z=u_+t\bwop u_-$ with $u_-=\dy_{r_\ellwP}(a_\ellwP)\cdots\dy_{r_1}(a_1)\in\opendunim$ as in Corollary \ref{cor:opendecoms_unique_decomp}. Also, $q\in\C^*$ is given by $q=\sdr_k(t)$ (with $t\in\invdtorus$), and the subindex relabeling $\wPindex$ is defined in Definition \ref{df:PerrinQuiver}. The set $\wPrimeSubsets$ is defined in Definition \ref{df:wPrimeSubsets} and all its elements are obtained using the operations of Lemma \ref{lem:legal_subexpression_moves}.
\end{cor}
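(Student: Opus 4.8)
The plan is to deduce the statement directly from Theorem \ref{thm:Explicit_LP_LG_model} by a change of indexing set, with no new analytic input required. Comparing the desired expression \eqref{eq:LP_expression_for_the_potential_using_quiver_subsets} with the already-established expression \eqref{eq:Explicit_LP_expression_for_the_potential}, the two share both the linear term $\sum_{i=1}^\ellwP a_i$ and the denominator $\prod_{i=1}^\ellwP a_i$ of the quantum term verbatim. Hence it suffices to show that the two numerators agree, that is
\[
\sum_{(i_j)\in\wPrimeSubExp} a_{i_1}\cdots a_{i_{\ellwPrime}} = \sum_{S\in\wPrimeSubsets}\prod_{(\rtb,j)\in S} a_{\wPindex(\rtb,j)}.
\]
The key tool for this is the bijection $\wPindex:\wPrimeSubsets\to\wPrimeSubExp$ recorded in Remark \ref{rem:Bijection_between_Subsets_and_Subsequences}.

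First I would invoke this bijection, which sends a subset $S=\bigl((\rtb_1,j_1),\ldots,(\rtb_\ellwPrime,j_\ellwPrime)\bigr)\in\wPrimeSubsets$ to the increasing sequence $(i_1,\ldots,i_\ellwPrime)=\bigl(\wPindex(\rtb_1,j_1),\ldots,\wPindex(\rtb_\ellwPrime,j_\ellwPrime)\bigr)\in\wPrimeSubExp$. Under this identification the monomial attached to $S$ on the right-hand side is
\[
\prod_{(\rtb,j)\in S} a_{\wPindex(\rtb,j)} = a_{\wPindex(\rtb_1,j_1)}\cdots a_{\wPindex(\rtb_\ellwPrime,j_\ellwPrime)} = a_{i_1}\cdots a_{i_\ellwPrime},
\]
which is precisely the monomial attached to $(i_1,\ldots,i_\ellwPrime)$ on the left-hand side. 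Since $\wPindex$ is a bijection, the two sums coincide term by term, so the numerators agree and \eqref{eq:LP_expression_for_the_potential_using_quiver_subsets} follows from \eqref{eq:Explicit_LP_expression_for_the_potential}. The remaining assertion of the corollary, that every element of $\wPrimeSubsets$ is reachable from the minimal subset by the two operations of Lemma \ref{lem:legal_subexpression_moves}, is exactly the content of the Proposition immediately preceding the statement, so nothing further is needed there.

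The computation for the corollary itself is thus only a relabeling, and I expect no genuine obstacle at this stage: the substance has already been carried by Theorem \ref{thm:Explicit_LP_LG_model} and by the generating Proposition. The one point that warrants a moment of care is verifying that $\wPindex$ really is well defined and bijective as asserted in Remark \ref{rem:Bijection_between_Subsets_and_Subsequences}; concretely, one must check that the ordering convention imposed on the vertices of a subset $S$ (namely $\wPindex(\rtb_1,j_1)<\cdots<\wPindex(\rtb_\ellwPrime,j_\ellwPrime)$) matches the strictly-increasing-index convention defining $\wPrimeSubExp$, so that the image sequence is automatically increasing and the inverse map recovers the same subset. Once this compatibility of conventions is observed, the proof is complete.
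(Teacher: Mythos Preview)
Your proposal is correct and matches the paper's own argument: the corollary is stated immediately after the sentence ``Combining this with Theorem \ref{thm:Explicit_LP_LG_model} and Remark \ref{rem:Bijection_between_Subsets_and_Subsequences}, we conclude the following,'' and your write-up spells out exactly that combination. The only content beyond the bijective relabeling is the reachability claim, which, as you note, is precisely the preceding Proposition.
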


\section{Laurent polynomial potentials for all the cominuscule homogeneous spaces}\label{sec:Application}
Theorem \ref{thm:Explicit_LP_LG_model} allows us to calculate Laurent polynomial potentials for the cominuscule homogeneous spaces listed in Table \ref{tab:CominusculeSpaces}, and Corollary \ref{cor:Explicit_LP_LG_model_with_quiver_subsets} gives us a tractable way to find all the terms. In this section, we will give reduced expressions for $\wP$ and $\wPrime$, the quivers $\PerrinQ_{\X}$ for all the cominuscule homogeneous varieties and we will work out the sets $\wPrimeSubsets$ and the resulting Laurent polynomial expressions for representative examples. 

The Laurent polynomials we obtain for quadrics (type $\LGB_n$ and $\LGD_n$) and Lagrangian Grassmannians (type $\LGC_n$) are identical to those found in \cite{Pech_Rietsch_Williams_Quadrics} (Propositions 2.2 and 3.11) and \cite{Pech_Rietsch_Lagrangian_Grassmannians} (Proposition A.1). 
This is to be expected, as Theorem \ref{thm:Explicit_LP_LG_model} is obtained by a generalization of the methods used there.

However, to the best of our knowledge, the Laurent polynomials for orthogonal Grassmannians (type $\LGD_n$) for general $n$, the Cayley plane (type $\LGE_6$) and the Freudenthal variety (type $\LGE_7$) have not yet been given. Moreover, all the potentials have a uniform structure resembling Givental's Laurent polynomial potential for projective complete intersections \cite{Givental_Equivariant_Gromov_Witten_invariants}, namely they are the sum of the toric coordinates plus a quantum term consisting of a homogeneous polynomial divided by the product of all the toric coordinates.

\subsection{The Grassmannian,} $\X=\Gr(k,n)=\SL_n/\P_k$, considered as a homogeneous space for the special linear group, the simply-connected complex Lie group of type $\LGA_{n-1}$. Note that the parabolic subgroup is given by
\[
\P_k = \mtrx{{cc} \GL_k & \Mat_{k\times(n-k)} \\ 0 & \GL_{n-k}}\cap\SL_n.
\]
We make two assumptions on $k$: Firstly, we assume that $k\notin\{1,n-1\}$: for $k=1$ and $k=n-1$ we find $\wPrime=1$. Secondly, we assume without loss of generality that $k>n-k$: for the remaining cases apply the Dynkin diagram bijection $i\mapsto n-i$. The longest Weyl group element has minimal coset representative
\[
\wP = (s_{n-k}s_{n+1-k}\cdots s_{n-1})(s_{n-1-k}s_{n-k}\cdots s_{n-2})\cdots(s_1s_2\cdots s_k),
\]
having $n-k$ products in parentheses each with $k$ factors. On the other hand, we find for $\wPrime$ the reduced expression
\[
\wPrime = (s_{n-k}\cdots s_{n-2})(s_{n-k-1}\cdots s_{n-3})\cdots(s_2\cdots s_k),
\]
having $n-k-1$ products in parentheses each with $k-1$ factors. 

The quiver $\PerrinQ_{\X}$ can be written as a $(n-k-1)\times(k-1)$-rectangle and the reduced subexpression for $\wPrime$ that is minimal in the lexicographical order is the $(n-k-2)\times(k-2)$-rectangle obtained by removing the bottom row and rightmost column.
\begin{ex}
Consider $\Gr(4,7)$, which is homogeneous for $\SL_7$ of type $\LGA_6$. We find that $\wP=(s_3s_4s_5s_6)(s_2s_3s_4s_5)(s_1s_2s_3s_4)$ and $\wPrime=(s_3s_4s_5)(s_2s_3s_4)$. The quiver is of the following form, using the conventions of Example \ref{ex:MiniGrassmannian}:
\[
\begin{tikzpicture}[scale=.4,style=very thick,baseline=0em]
	\draw[gray]
	(-2,3) -- (3,3);
	\draw[gray,fill=gray]
	(-2,3) circle (.15)
	(-1,3) circle (.15)
	(0,3) circle (.15)
	(2,3) circle (.15)
	(3,3) circle (.15);
	\draw[gray,fill=white]
	(1,3) circle (.15);
	\node at (-2,3)[above,gray]{\tiny$1$};
	\node at (-1,3)[above,gray]{\tiny$2$};
	\node at (0,3)[above,gray]{\tiny$3$};
	\node at (1,3)[above,gray]{\tiny$4$};
	\node at (2,3)[above,gray]{\tiny$5$};
	\node at (3,3)[above,gray]{\tiny$6$};
	\draw 
	(0,2) -- (3,-1) -- (1,-3) -- (-2,0) -- (0,2)
	(-1,1) -- (2,-2)
	(1,1) -- (-1,-1)
	(2,0) -- (0,-2);
	\draw[black, fill=black] 
	(0,2) circle (.15) 
	(1,1) circle (.15)
	(2,0) circle (.15)
	(3,-1) circle (.15)
	(-1,1) circle (.15)
	(0,0) circle (.15)
	(1,-1) circle (.15)
	(2,-2) circle (.15)
	(-2,0) circle (.15)
	(-1,-1) circle (.15)
	(0,-2) circle (.15)
	(1,-3) circle (.15);
	\node at (0,2)[right]{\tiny$1$};
	\node at (1,1)[right]{\tiny$2$};
	\node at (2,0)[right]{\tiny$3$};
	\node at (3,-1)[right]{\tiny$4$};
	\node at (-1,1)[right]{\tiny$5$};
	\node at (0,0)[right]{\tiny$6$};
	\node at (1,-1)[right]{\tiny$7$};
	\node at (2,-2)[right]{\tiny$8$};
	\node at (-2,0)[right]{\tiny$9$};
	\node at (-1,-1)[right]{\tiny$10$};
	\node at (0,-2)[right]{\tiny$11$};
	\node at (1,-3)[right]{\tiny$12$};
\end{tikzpicture}
\qquad
\begin{tabular}{ccccc}
\begin{tikzpicture}[scale = .3]
	\draw 
	(0,2) -- (3,-1) -- (1,-3) -- (-2,0) -- (0,2)
	(-1,1) -- (2,-2)
	(1,1) -- (-1,-1)
	(2,0) -- (0,-2);
	\draw[red,thick] 
	(0,2) circle (.3)
	(1,1) circle (.3)
	(2,0) circle (.3)
	(-1,1) circle (.3)
	(0,0) circle (.3)
	(1,-1) circle (.3);
\end{tikzpicture}
&
\begin{tikzpicture}[scale = .3]
	\draw 
	(0,2) -- (3,-1) -- (1,-3) -- (-2,0) -- (0,2)
	(-1,1) -- (2,-2)
	(1,1) -- (-1,-1)
	(2,0) -- (0,-2);
	\draw[red,thick] 
	(0,2) circle (.3)
	(1,1) circle (.3)
	(2,0) circle (.3)
	(-1,1) circle (.3)
	(0,0) circle (.3)
	(1,-3) circle (.3);
\end{tikzpicture}
&
\begin{tikzpicture}[scale = .3]
	\draw 
	(0,2) -- (3,-1) -- (1,-3) -- (-2,0) -- (0,2)
	(-1,1) -- (2,-2)
	(1,1) -- (-1,-1)
	(2,0) -- (0,-2);
	\draw[red,thick] 
	(0,2) circle (.3)
	(1,1) circle (.3)
	(2,-2) circle (.3)
	(-1,1) circle (.3)
	(0,0) circle (.3)
	(1,-3) circle (.3);
\end{tikzpicture}
&
\begin{tikzpicture}[scale = .3]
	\draw 
	(0,2) -- (3,-1) -- (1,-3) -- (-2,0) -- (0,2)
	(-1,1) -- (2,-2)
	(1,1) -- (-1,-1)
	(2,0) -- (0,-2);
	\draw[red,thick] 
	(0,2) circle (.3)
	(1,1) circle (.3)
	(2,0) circle (.3)
	(-1,1) circle (.3)
	(0,-2) circle (.3)
	(1,-3) circle (.3);
\end{tikzpicture}
&
\begin{tikzpicture}[scale = .3]
	\draw 
	(0,2) -- (3,-1) -- (1,-3) -- (-2,0) -- (0,2)
	(-1,1) -- (2,-2)
	(1,1) -- (-1,-1)
	(2,0) -- (0,-2);
	\draw[red,thick] 
	(0,2) circle (.3)
	(1,1) circle (.3)
	(2,-2) circle (.3)
	(-1,1) circle (.3)
	(0,-2) circle (.3)
	(1,-3) circle (.3);
\end{tikzpicture}
\\
\begin{tikzpicture}[scale = .3]
	\draw 
	(0,2) -- (3,-1) -- (1,-3) -- (-2,0) -- (0,2)
	(-1,1) -- (2,-2)
	(1,1) -- (-1,-1)
	(2,0) -- (0,-2);
	\draw[red,thick] 
	(0,2) circle (.3)
	(1,-1) circle (.3)
	(2,-2) circle (.3)
	(-1,1) circle (.3)
	(0,-2) circle (.3)
	(1,-3) circle (.3);
\end{tikzpicture}
&
\begin{tikzpicture}[scale = .3]
	\draw 
	(0,2) -- (3,-1) -- (1,-3) -- (-2,0) -- (0,2)
	(-1,1) -- (2,-2)
	(1,1) -- (-1,-1)
	(2,0) -- (0,-2);
	\draw[red,thick] 
	(0,2) circle (.3)
	(1,1) circle (.3)
	(2,0) circle (.3)
	(-1,-1) circle (.3)
	(0,-2) circle (.3)
	(1,-3) circle (.3);
\end{tikzpicture}
&
\begin{tikzpicture}[scale = .3]
	\draw 
	(0,2) -- (3,-1) -- (1,-3) -- (-2,0) -- (0,2)
	(-1,1) -- (2,-2)
	(1,1) -- (-1,-1)
	(2,0) -- (0,-2);
	\draw[red,thick] 
	(0,2) circle (.3)
	(1,1) circle (.3)
	(2,-2) circle (.3)
	(-1,-1) circle (.3)
	(0,-2) circle (.3)
	(1,-3) circle (.3);
\end{tikzpicture}
&
\begin{tikzpicture}[scale = .3]
	\draw 
	(0,2) -- (3,-1) -- (1,-3) -- (-2,0) -- (0,2)
	(-1,1) -- (2,-2)
	(1,1) -- (-1,-1)
	(2,0) -- (0,-2);
	\draw[red,thick] 
	(0,2) circle (.3)
	(1,-1) circle (.3)
	(2,-2) circle (.3)
	(-1,-1) circle (.3)
	(0,-2) circle (.3)
	(1,-3) circle (.3);
\end{tikzpicture}
&
\begin{tikzpicture}[scale = .3]
	\draw 
	(0,2) -- (3,-1) -- (1,-3) -- (-2,0) -- (0,2)
	(-1,1) -- (2,-2)
	(1,1) -- (-1,-1)
	(2,0) -- (0,-2);
	\draw[red,thick] 
	(0,0) circle (.3)
	(1,-1) circle (.3)
	(2,-2) circle (.3)
	(-1,-1) circle (.3)
	(0,-2) circle (.3)
	(1,-3) circle (.3);
\end{tikzpicture}
\end{tabular}
\]
The nine subsets associated to reduced subexpressions of $\wPrime$ in $\wP$ are drawn to the right. So, taking $z=u_+t\bwop u_-$ with $(u_-)^{-1} = \dy_3(-a_1)\dy_4(-a_2)\cdots\dy_4(-a_{10})$ and $q=\sdr_4(t)$, we find
\[
\potZo(z) = \sum_{i=1}^{12}a_i + q\frac{P(a_i)}{a_1a_2a_3a_4a_5a_6a_7a_8a_9a_{10}a_{11}a_{12}},
\]
where
\ali{
\hspace{-1em}
P(a_i) &= a_1a_2a_3a_5a_6a_7 + a_1a_2a_3a_5a_6a_{12}+a_1a_2a_5a_6a_8a_{12}+a_1a_2a_3a_5a_{11}a_{12}+a_1a_2a_5a_8a_{11}a_{12} \\
&{}+a_1a_5a_7a_8a_{11}a_{12} + a_1a_2a_3a_{10}a_{11}a_{12} + a_1a_2a_8a_{10}a_{11}a_{12} + a_1a_7a_8a_{10}a_{11}a_{12}+a_6a_7a_8a_{10}a_{11}a_{12},
}
with the summands written in order of the subsets. 

Note that there is a clear bijection between the subsets of the quiver $\PerrinQ_{\X}$ and Young diagrams that fit inside a $2\times3$-rectangle: the unmarked vertices correspond to the contour of the Young diagram. For example:
\[
a_1a_2a_5a_8a_{11}a_{12} \quad\leftrightarrow \quad
\begin{tikzpicture}[scale = .3,baseline = -0.5em]
	\draw 
	(0,2) -- (3,-1) -- (1,-3) -- (-2,0) -- (0,2)
	(-1,1) -- (2,-2)
	(1,1) -- (-1,-1)
	(2,0) -- (0,-2);
	\draw[red,thick] 
	(0,2) circle (.3)
	(1,1) circle (.3)
	(2,-2) circle (.3)
	(-1,1) circle (.3)
	(0,-2) circle (.3)
	(1,-3) circle (.3);
	\draw[ultra thick]
	(-2.21,0.21) -- (-1.79,-0.21)   (-2.21,-0.21) -- (-1.79,0.21)
	(-1.21,-0.79) -- (-0.79,-1.21)   (-1.21,-1.21) -- (-0.79,-0.79)
	(-0.21,0.21) -- (0.21,-0.21)   (-0.21,-0.21) -- (0.21,0.21)
	(0.79,-0.79) -- (1.21,-1.21)   (1.21,-0.79) -- (0.79,-1.21)
	(1.79,0.21) -- (2.21,-0.21)   (2.21,0.21) -- (1.79,-0.21)
	(2.79,-.79) -- (3.21,-1.21)   (2.79,-1.21) -- (3.21,-0.79);
\end{tikzpicture}
\quad\leftrightarrow\quad
\begin{tikzpicture}[scale = .4,baseline=1em]
	\draw 
	(0,0) -- (1,0) -- (1,1) -- (2,1) -- (2,2) -- (0,2) -- (0,0)
	(0,1) -- (1,1) -- (1,2);
	\draw[ultra thick]
	(-0.15,0.15) -- (0.15,-0.15)   (-0.15,-0.15) -- (0.15,0.15)
	(0.85,0.15) -- (1.15,-0.15)   (0.85,-0.15) -- (1.15,0.15)
	(0.85,1.15) -- (1.15,0.85)   (0.85,0.85) -- (1.15,1.15)
	(1.85,1.15) -- (2.15,0.85)   (1.85,0.85) -- (2.15,1.15)
	(1.85,2.15) -- (2.15,1.85)   (1.85,1.85) -- (2.15,2.15)
	(2.85,2.15) -- (3.15,1.85)   (2.85,1.85) -- (3.15,2.15);
\end{tikzpicture}
\]
This bijection works in general, so we obtain an alternative description of the quantum term for a general Grassmannian $\Gr(k,n)$ that sums over the Young diagrams that fit inside a $(n-k-1)\times(k-1)$-rectangle.
\end{ex}
\begin{rem}
Other Laurent polynomial Landau-Ginzburg models have already been given for Grassmannians. Particularly relevant is the potential $\potEHX:(\C^*)^{k(n-k)}\times\C^*\to\C$ given in \cite{Eguchi_Hori_Xiong_Gravitation_quantum_cohomology}, equation (B.25) (see also \cite{Batyrev_CF_K_VS_Conifold_transitions_and_mirror_symmetry_for_Calabi_Yau_complete_intersections_in_Grassmannians}, Conjecture 4.2.2, as well as \cite{Rietsch_Marsh_Grassmannians}, section 6.3), which is described as follows. 

For $i\in\{1,\ldots,n-k\}$ and $j\in\{1,\ldots,k\}$, write $[i,j]$ for the Young diagram corresponding to the partition $(j,1,1,\ldots,1)$ of length $i$; for example, $[2,3]$ corresponds to the diagram \tikz[scale=0.15,baseline=1pt]{\draw (0,0) -- (0,2) -- (3,2) -- (3,1) --(1,1) -- (1,0) -- (0,0) (0,1)--(1,1)--(1,2) (2,2)--(2,1);}. Denote the set of these diagrams by $\Lambda_s$. Now, denote the coordinates of $(\C^*)^{k(n-k)}$ by $z_{[i,j]}$ for $[i,j]\in\Lambda_s$, denote the coordinate of the remaining factor $\C^*$ by $z_{\infty}=q$ and finally set $z_\varnothing=1$. Consider the quiver with as vertices $\Lambda_s^*=\Lambda_s\cup\{\varnothing,\infty\}$ and as arrows 
\[
\{[i,j]\to[i,j+1],~[i,j]\to[i+1,j]\}~\cup~\{\varnothing\to[1,1]\}~\cup~\{[n-k,k]\to\infty\}.
\]
For example, for $\G(4,7)$ we obtain:
\[
\begin{tikzcd}[sep = scriptsize]
\varnothing 
\rar &{\tikz[scale=0.15]{\draw (0,0) -- (0,1) -- (1,1) -- (1,0) -- (0,0)}} \dar 
\rar & {\tikz[scale=0.15]{\draw (0,0) -- (2,0) -- (2,1) -- (0,1) -- (0,0) (1,0) -- (1,1)}} \dar
\rar & {\tikz[scale=0.15]{\draw (0,0) -- (3,0) -- (3,1) -- (0,1) -- (0,0) (1,0) -- (1,1) (2,0) -- (2,1)}} \dar
\rar & {\tikz[scale=0.15]{\draw (0,0) -- (4,0) -- (4,1) -- (0,1) -- (0,0) (1,0) -- (1,1) (2,0) -- (2,1) (3,0) -- (3,1)}} \dar \\
	& {\tikz[scale=0.15]{\draw (0,0) -- (0,2) -- (1,2) -- (1,0) -- (0,0) (0,1) -- (1,1)}} \dar 
\rar & {\tikz[scale=0.15]{\draw (0,0) -- (0,2) -- (2,2) -- (2,1) -- (1,1) -- (1,0) -- (0,0) (0,1) -- (1,1) -- (1,2)}} \dar 
\rar & {\tikz[scale=0.15]{\draw (0,0) -- (0,2) -- (3,2) -- (3,1) -- (1,1) -- (1,0) -- (0,0) (0,1) -- (1,1) -- (1,2) (2,1) -- (2,2)}} \dar
\rar & {\tikz[scale=0.15]{\draw (0,0) -- (0,2) -- (4,2) -- (4,1) -- (1,1) -- (1,0) -- (0,0) (0,1) -- (1,1) -- (1,2) (2,1) -- (2,2) (3,1) -- (3,2)}} \dar \\
	& {\tikz[scale=0.15]{\draw (0,0) -- (0,3) -- (1,3) -- (1,0) -- (0,0) (0,1) -- (1,1) (0,2) -- (1,2)}} 
\rar & {\tikz[scale=0.15]{\draw (0,0) -- (0,3) -- (2,3) -- (2,2) -- (1,2) -- (1,0) -- (0,0) (0,2) -- (1,2) -- (1,3) (0,1)--(1,1)}} 
\rar & {\tikz[scale=0.15]{\draw (0,0) -- (0,3) -- (3,3) -- (3,2) -- (1,2) -- (1,0) -- (0,0) (0,2) -- (1,2) -- (1,3) (0,1)--(1,1) (2,2) -- (2,3)}} 
\rar & {\tikz[scale=0.15]{\draw (0,0) -- (0,3) -- (4,3) -- (4,2) -- (1,2) -- (1,0) -- (0,0) (0,2) -- (1,2) -- (1,3) (0,1)--(1,1) (2,2) -- (2,3) (3,2) -- (3,3)}} 
\rar & \infty
\end{tikzcd}
\]
Denoting $[1,0]=\varnothing$ and $[n-k,k+1]=\infty$ (so that $z_{[1,0]} = z_\varnothing=1$ and $z_{[n-k,k+1]}=z_\infty=q$), consider for $[i,j]\in\Lambda_s^*$ the following quotients:
\begin{equation}
T_{[i,j]} = \frac{z_{[i+1,j]}+z_{[i,j+1]}}{z_{[i,j]}},
\label{eq:EHX_terms_of_potential}
\end{equation}
where $z_{[i,j]}=0$ for $[i,j]\notin\Lambda_s^*$. In~other words, for each vertex $[i,j]\in\Lambda_s^*$, $T_{[i,j]}$ consists of the sum of the coordinates at the ends of the outgoing arrows divided by the coordinate of the vertex. For example, 
\ali{
&T_{\tikz[scale=0.1]{\draw (0,0) -- (0,1) -- (1,1) -- (1,0) -- (0,0)}} 
= \frac{z_{\tikz[scale=0.1]{\draw (0,0) -- (2,0) -- (2,1) -- (0,1) -- (0,0) (1,0) -- (1,1)}} 
	+ z_ {\tikz[scale=0.1]{\draw (0,0) -- (0,2) -- (1,2) -- (1,0) -- (0,0) (0,1) -- (1,1)}}}
	{z_{\tikz[scale=0.1]{\draw (0,0) -- (0,1) -- (1,1) -- (1,0) -- (0,0)}}}, \quad
T_{[n-k,1]} 
= \frac{z_{[n-k,2]}}{z_{[n-k,1]}},\quad 
T_{[1,k]}=\frac{z_{[2,k]}}{z_{[1,k]}}, \quad 
T_{\varnothing}=T_{[1,0]}=z_{\tikz[scale=0.1]{\draw (0,0) -- (0,1) -- (1,1) -- (1,0) -- (0,0)}}, \\
&T_{[n-k,k]}=\frac q{z_{[n-k,k]}} \qand 
T_{\infty}=T_{[n-k,k+1]} = 0.
}

The Laurent polynomial potential of \cite{Eguchi_Hori_Xiong_Gravitation_quantum_cohomology} is the following:
\begin{equation}
\potEHX(z_{[i,j]}) = \sum_{[i,j]\in\Lambda_s^*} T_{[i,j]},
\label{eq:EHX_potential}
\end{equation}
where the quantum term is $T_{[n-k,k]}=\frac q{z_{[n-k,k]}}$. In the example of $\Gr(4,7)$, this becomes:
\[
\potEHX(z_{[i,j]}) 
= z_{\tikz[scale=0.1]{\draw (0,0) -- (0,1) -- (1,1) -- (1,0) -- (0,0)}} 
+ \frac{z_{\tikz[scale=0.1]{\draw (0,0) -- (2,0) -- (2,1) -- (0,1) -- (0,0) (1,0) -- (1,1)}} 
		+ z_ {\tikz[scale=0.1]{\draw (0,0) -- (0,2) -- (1,2) -- (1,0) -- (0,0) (0,1) -- (1,1)}}}
		{z_{\tikz[scale=0.1]{\draw (0,0) -- (0,1) -- (1,1) -- (1,0) -- (0,0)}}} 
+ \ldots 
+ \frac{z_{\tikz[scale=0.1]{\draw (0,0) -- (0,3) -- (4,3) -- (4,2) -- (1,2) -- (1,0) -- (0,0) (0,2) -- (1,2) -- (1,3) (0,1)--(1,1) (2,2) -- (2,3) (3,2) -- (3,3)}}}
		{z_{\tikz[scale=0.1]{\draw (0,0) -- (0,3) -- (3,3) -- (3,2) -- (1,2) -- (1,0) -- (0,0) (0,2) -- (1,2) -- (1,3) (0,1)--(1,1) (2,2) -- (2,3)}}}
+ \frac{z_{\tikz[scale=0.1]{\draw (0,0) -- (0,3) -- (4,3) -- (4,2) -- (1,2) -- (1,0) -- (0,0) (0,2) -- (1,2) -- (1,3) (0,1)--(1,1) (2,2) -- (2,3) (3,2) -- (3,3)}}}
		{z_{\tikz[scale=0.1]{\draw (0,0) -- (0,2) -- (4,2) -- (4,1) -- (1,1) -- (1,0) -- (0,0) (0,1) -- (1,1) -- (1,2) (2,1) -- (2,2) (3,1) -- (3,2)}}}
+ \frac q{z_{\tikz[scale=0.1]{\draw (0,0) -- (0,3) -- (4,3) -- (4,2) -- (1,2) -- (1,0) -- (0,0) (0,2) -- (1,2) -- (1,3) (0,1)--(1,1) (2,2) -- (2,3) (3,2) -- (3,3)}}},
\]
which has a total of thirteen terms (twelve plus a quantum term).

The potential $\potEHX$ of \eqref{eq:EHX_potential} is shown in \cite{Rietsch_Marsh_Grassmannians}, Theorem 4.6, to be a local Laurent polynomial expression for the Landau-Ginzburg model used there. That model is also shown in Proposition 6.7 of \cite{Rietsch_Marsh_Grassmannians} to be isomorphic to Rietsch's Lie-theoretic Landau-Ginzburg model given in \cite{Rietsch_Mirror_Construction}, see also Theorem \ref{thm:Lie-theoretic_LG_model} here. By construction, the Laurent polynomial potential $\potZo$ of Theorem \ref{thm:Explicit_LP_LG_model} and Corollary \ref{cor:Explicit_LP_LG_model_with_quiver_subsets} here is a local expression for Rietsch's Lie-theoretic Landau-Ginzburg model. Thus, both $\potZo$ and $\potEHX$ are local Laurent polynomial expressions for the same model. However, it is clear that $\potZo$ and $\potEHX$ are not isomorphic: the quantum term of $\potEHX$ is a Laurent monomial, namely $T_{[n-k,k]}=\frac q{z_{[n-k,k]}}$, whereas the quantum term of $\potZo$ is not.

On the other hand, it is straightforward to find a birational map $\Phi$ such that $\Phi^*\potEHX=\potZo$. 
First, consider the following bijection: for $[i,j]\in\Lambda_s\setminus\{[n-k,k]\}$, let $\phi([i,j])=i\cdot k-j+1$, and let $\phi(\varnothing)=(n-k-1)k+1$. Clearly, $\phi$ is a bijection $\Lambda_s^*\setminus\{[n-k,k],\infty\}\to\{1,\ldots,k(n-k)\}$. Define $\Phi:(\C^*)^{k(n-k)}\to(\C^*)^{k(n-k)}$ to be the birational map such that $\Phi^*(a_i) = T_{\phi^{-1}(i)}$. By definition, we find (recalling $T_\infty=0$)
\ali{
\Phi^*\potZo &= \Phi^*\left(\sum_{i=1}^{k(n-k)}a_i  + q\frac{P(a_j)}{a_1\cdots a_{k(n-k)}}\right) \\
&= \sum_{[i,j]\in\Lambda_s^*\setminus\{[n-k,k]\}} T_{[i,j]} + q\frac{P(T_{[i',j']})}{T_{\varnothing}T_{[1,1]}T_{[1,2]}\cdots T_{[n-k,k-1]}},
}
where $P$ is the homogeneous polynomial in the numerator of the quantum term of Corollary \ref{cor:Explicit_LP_LG_model_with_quiver_subsets}. Thus, what remains to be shown is that the quantum term simplifies to $T_{[n-k,k]}=\frac q{z_{[n-k,k]}}$, which is a straightforward computation for any given $k$ and $n$. 

For example, in the case $\Gr(4,7)$ we can simplify the following products: 
\[
T_\varnothing 
T_{\tikz[scale=0.1]{\draw (0,0) -- (0,1) -- (1,1) -- (1,0) -- (0,0)}} 
=
z_{\tikz[scale=0.1]{\draw (0,0) -- (0,1) -- (1,1) -- (1,0) -- (0,0)}}
\frac{z_{\tikz[scale=0.1]{\draw (0,0) -- (2,0) -- (2,1) -- (0,1) -- (0,0) (1,0) -- (1,1)}} 
+ z_ {\tikz[scale=0.1]{\draw (0,0) -- (0,2) -- (1,2) -- (1,0) -- (0,0) (0,1) -- (1,1)}}}
{z_{\tikz[scale=0.1]{\draw (0,0) -- (0,1) -- (1,1) -- (1,0) -- (0,0)}}} 
=
z_{\tikz[scale=0.1]{\draw (0,0) -- (2,0) -- (2,1) -- (0,1) -- (0,0) (1,0) -- (1,1)}} 
+ z_ {\tikz[scale=0.1]{\draw (0,0) -- (0,2) -- (1,2) -- (1,0) -- (0,0) (0,1) -- (1,1)}},
\quad
T_{\tikz[scale=0.1]{\draw (0,0) -- (4,0) -- (4,1) -- (0,1) -- (0,0) (1,0) -- (1,1) (2,0) -- (2,1) (3,0) -- (3,1)}}
T_{\tikz[scale=0.1]{\draw (0,0) -- (0,2) -- (4,2) -- (4,1) -- (1,1) -- (1,0) -- (0,0) (0,1) -- (1,1) -- (1,2) (2,1) -- (2,2) (3,1) -- (3,2)}} 
= 
\frac{z_{\tikz[scale=0.1]{\draw (0,0) -- (0,2) -- (4,2) -- (4,1) -- (1,1) -- (1,0) -- (0,0) (0,1) -- (1,1) -- (1,2) (2,1) -- (2,2) (3,1) -- (3,2)}}}
{z_{\tikz[scale=0.1]{\draw (0,0) -- (4,0) -- (4,1) -- (0,1) -- (0,0) (1,0) -- (1,1) (2,0) -- (2,1) (3,0) -- (3,1)}}}
\frac{z_{\tikz[scale=0.1]{\draw (0,0) -- (0,3) -- (4,3) -- (4,2) -- (1,2) -- (1,0) -- (0,0) (0,2) -- (1,2) -- (1,3) (0,1)--(1,1) (2,2) -- (2,3) (3,2) -- (3,3)}}}
{z_{\tikz[scale=0.1]{\draw (0,0) -- (0,2) -- (4,2) -- (4,1) -- (1,1) -- (1,0) -- (0,0) (0,1) -- (1,1) -- (1,2) (2,1) -- (2,2) (3,1) -- (3,2)}}}
=
\frac{z_{\tikz[scale=0.1]{\draw (0,0) -- (0,3) -- (4,3) -- (4,2) -- (1,2) -- (1,0) -- (0,0) (0,2) -- (1,2) -- (1,3) (0,1)--(1,1) (2,2) -- (2,3) (3,2) -- (3,3)}}}
{z_{\tikz[scale=0.1]{\draw (0,0) -- (4,0) -- (4,1) -- (0,1) -- (0,0) (1,0) -- (1,1) (2,0) -- (2,1) (3,0) -- (3,1)}}},
\quad 
T_{\tikz[scale=0.1]{\draw (0,0) -- (0,3) -- (1,3) -- (1,0) -- (0,0) (0,1) -- (1,1) (0,2) -- (1,2)}}
T_{\tikz[scale=0.1]{\draw (0,0) -- (0,3) -- (2,3) -- (2,2) -- (1,2) -- (1,0) -- (0,0) (0,2) -- (1,2) -- (1,3) (0,1)--(1,1)}}
T_{\tikz[scale=0.1]{\draw (0,0) -- (0,3) -- (3,3) -- (3,2) -- (1,2) -- (1,0) -- (0,0) (0,2) -- (1,2) -- (1,3) (0,1)--(1,1) (2,2) -- (2,3)}} 
=
\frac{z_{\tikz[scale=0.1]{\draw (0,0) -- (0,3) -- (4,3) -- (4,2) -- (1,2) -- (1,0) -- (0,0) (0,2) -- (1,2) -- (1,3) (0,1)--(1,1) (2,2) -- (2,3) (3,2) -- (3,3)}}}
{z_{\tikz[scale=0.1]{\draw (0,0) -- (0,3) -- (1,3) -- (1,0) -- (0,0) (0,1) -- (1,1) (0,2) -- (1,2)}}}.
\]
Thus, the denominator becomes
\ali{
(
T_\varnothing 
T_{\tikz[scale=0.1]{\draw (0,0) -- (0,1) -- (1,1) -- (1,0) -- (0,0)}} 
)&
(
T_{\tikz[scale=0.1]{\draw (0,0) -- (2,0) -- (2,1) -- (0,1) -- (0,0) (1,0) -- (1,1)}}
T_{\tikz[scale=0.1]{\draw (0,0) -- (3,0) -- (3,1) -- (0,1) -- (0,0) (1,0) -- (1,1) (2,0) -- (2,1)}}
T_{\tikz[scale=0.1]{\draw (0,0) -- (0,2) -- (1,2) -- (1,0) -- (0,0) (0,1) -- (1,1)}}
T_{\tikz[scale=0.1]{\draw (0,0) -- (0,2) -- (2,2) -- (2,1) -- (1,1) -- (1,0) -- (0,0) (0,1) -- (1,1) -- (1,2)}}
T_{\tikz[scale=0.1]{\draw (0,0) -- (0,2) -- (3,2) -- (3,1) -- (1,1) -- (1,0) -- (0,0) (0,1) -- (1,1) -- (1,2) (2,1) -- (2,2)}}
)
(T_{\tikz[scale=0.1]{\draw (0,0) -- (4,0) -- (4,1) -- (0,1) -- (0,0) (1,0) -- (1,1) (2,0) -- (2,1) (3,0) -- (3,1)}}
T_{\tikz[scale=0.1]{\draw (0,0) -- (0,2) -- (4,2) -- (4,1) -- (1,1) -- (1,0) -- (0,0) (0,1) -- (1,1) -- (1,2) (2,1) -- (2,2) (3,1) -- (3,2)}} 
)
(
T_{\tikz[scale=0.1]{\draw (0,0) -- (0,3) -- (1,3) -- (1,0) -- (0,0) (0,1) -- (1,1) (0,2) -- (1,2)}}
T_{\tikz[scale=0.1]{\draw (0,0) -- (0,3) -- (2,3) -- (2,2) -- (1,2) -- (1,0) -- (0,0) (0,2) -- (1,2) -- (1,3) (0,1)--(1,1)}}
T_{\tikz[scale=0.1]{\draw (0,0) -- (0,3) -- (3,3) -- (3,2) -- (1,2) -- (1,0) -- (0,0) (0,2) -- (1,2) -- (1,3) (0,1)--(1,1) (2,2) -- (2,3)}} 
)\\
={}&
(
z_{\tikz[scale=0.1]{\draw (0,0) -- (2,0) -- (2,1) -- (0,1) -- (0,0) (1,0) -- (1,1)}} 
+ z_ {\tikz[scale=0.1]{\draw (0,0) -- (0,2) -- (1,2) -- (1,0) -- (0,0) (0,1) -- (1,1)}}
)
\left(
\frac{z_{\tikz[scale=0.1]{\draw (0,0) -- (3,0) -- (3,1) -- (0,1) -- (0,0) (1,0) -- (1,1) (2,0) -- (2,1)}}
+z_{\tikz[scale=0.1]{\draw (0,0) -- (0,2) -- (2,2) -- (2,1) -- (1,1) -- (1,0) -- (0,0) (0,1) -- (1,1) -- (1,2)}}}
{z_{\tikz[scale=0.1]{\draw (0,0) -- (2,0) -- (2,1) -- (0,1) -- (0,0) (1,0) -- (1,1)}}}
\frac{z_{\tikz[scale=0.1]{\draw (0,0) -- (4,0) -- (4,1) -- (0,1) -- (0,0) (1,0) -- (1,1) (2,0) -- (2,1) (3,0) -- (3,1)}}
+z_{\tikz[scale=0.1]{\draw (0,0) -- (0,2) -- (3,2) -- (3,1) -- (1,1) -- (1,0) -- (0,0) (0,1) -- (1,1) -- (1,2) (2,1) -- (2,2)}}}
{z_{\tikz[scale=0.1]{\draw (0,0) -- (3,0) -- (3,1) -- (0,1) -- (0,0) (1,0) -- (1,1) (2,0) -- (2,1)}}}
\frac{z_{\tikz[scale=0.1]{\draw (0,0) -- (0,2) -- (2,2) -- (2,1) -- (1,1) -- (1,0) -- (0,0) (0,1) -- (1,1) -- (1,2)}}
+z_{\tikz[scale=0.1]{\draw (0,0) -- (0,3) -- (1,3) -- (1,0) -- (0,0) (0,1) -- (1,1) (0,2) -- (1,2)}}}
{z_{\tikz[scale=0.1]{\draw (0,0) -- (0,2) -- (1,2) -- (1,0) -- (0,0) (0,1) -- (1,1)}}}
\frac{z_{\tikz[scale=0.1]{\draw (0,0) -- (0,2) -- (3,2) -- (3,1) -- (1,1) -- (1,0) -- (0,0) (0,1) -- (1,1) -- (1,2) (2,1) -- (2,2)}}
+z_{\tikz[scale=0.1]{\draw (0,0) -- (0,3) -- (2,3) -- (2,2) -- (1,2) -- (1,0) -- (0,0) (0,2) -- (1,2) -- (1,3) (0,1)--(1,1)}}}
{z_{\tikz[scale=0.1]{\draw (0,0) -- (0,2) -- (2,2) -- (2,1) -- (1,1) -- (1,0) -- (0,0) (0,1) -- (1,1) -- (1,2)}}}
\frac{z_{\tikz[scale=0.1]{\draw (0,0) -- (0,2) -- (4,2) -- (4,1) -- (1,1) -- (1,0) -- (0,0) (0,1) -- (1,1) -- (1,2) (2,1) -- (2,2) (3,1) -- (3,2)}}
+z_{\tikz[scale=0.1]{\draw (0,0) -- (0,3) -- (3,3) -- (3,2) -- (1,2) -- (1,0) -- (0,0) (0,2) -- (1,2) -- (1,3) (0,1)--(1,1) (2,2) -- (2,3)}} }
{z_{\tikz[scale=0.1]{\draw (0,0) -- (0,2) -- (3,2) -- (3,1) -- (1,1) -- (1,0) -- (0,0) (0,1) -- (1,1) -- (1,2) (2,1) -- (2,2)}}}
\right)
\left(
\frac{z_{\tikz[scale=0.1]{\draw (0,0) -- (0,3) -- (4,3) -- (4,2) -- (1,2) -- (1,0) -- (0,0) (0,2) -- (1,2) -- (1,3) (0,1)--(1,1) (2,2) -- (2,3) (3,2) -- (3,3)}}}
{z_{\tikz[scale=0.1]{\draw (0,0) -- (4,0) -- (4,1) -- (0,1) -- (0,0) (1,0) -- (1,1) (2,0) -- (2,1) (3,0) -- (3,1)}}}
\right)
\left(
\frac{z_{\tikz[scale=0.1]{\draw (0,0) -- (0,3) -- (4,3) -- (4,2) -- (1,2) -- (1,0) -- (0,0) (0,2) -- (1,2) -- (1,3) (0,1)--(1,1) (2,2) -- (2,3) (3,2) -- (3,3)}}}
{z_{\tikz[scale=0.1]{\draw (0,0) -- (0,3) -- (1,3) -- (1,0) -- (0,0) (0,1) -- (1,1) (0,2) -- (1,2)}}}
\right).
}
It is more work to simplify $P(T_{[i',j']})$, but in the end we find
\[
P(T_{[i',j']}) = 
(
z_{\tikz[scale=0.1]{\draw (0,0) -- (2,0) -- (2,1) -- (0,1) -- (0,0) (1,0) -- (1,1)}} 
+ z_ {\tikz[scale=0.1]{\draw (0,0) -- (0,2) -- (1,2) -- (1,0) -- (0,0) (0,1) -- (1,1)}}
)
\left(
\frac{z_{\tikz[scale=0.1]{\draw (0,0) -- (3,0) -- (3,1) -- (0,1) -- (0,0) (1,0) -- (1,1) (2,0) -- (2,1)}}
+z_{\tikz[scale=0.1]{\draw (0,0) -- (0,2) -- (2,2) -- (2,1) -- (1,1) -- (1,0) -- (0,0) (0,1) -- (1,1) -- (1,2)}}}
{z_{\tikz[scale=0.1]{\draw (0,0) -- (2,0) -- (2,1) -- (0,1) -- (0,0) (1,0) -- (1,1)}}}
\frac{z_{\tikz[scale=0.1]{\draw (0,0) -- (4,0) -- (4,1) -- (0,1) -- (0,0) (1,0) -- (1,1) (2,0) -- (2,1) (3,0) -- (3,1)}}
+z_{\tikz[scale=0.1]{\draw (0,0) -- (0,2) -- (3,2) -- (3,1) -- (1,1) -- (1,0) -- (0,0) (0,1) -- (1,1) -- (1,2) (2,1) -- (2,2)}}}
{z_{\tikz[scale=0.1]{\draw (0,0) -- (3,0) -- (3,1) -- (0,1) -- (0,0) (1,0) -- (1,1) (2,0) -- (2,1)}}}
\frac{z_{\tikz[scale=0.1]{\draw (0,0) -- (0,2) -- (2,2) -- (2,1) -- (1,1) -- (1,0) -- (0,0) (0,1) -- (1,1) -- (1,2)}}
+z_{\tikz[scale=0.1]{\draw (0,0) -- (0,3) -- (1,3) -- (1,0) -- (0,0) (0,1) -- (1,1) (0,2) -- (1,2)}}}
{z_{\tikz[scale=0.1]{\draw (0,0) -- (0,2) -- (1,2) -- (1,0) -- (0,0) (0,1) -- (1,1)}}}
\frac{z_{\tikz[scale=0.1]{\draw (0,0) -- (0,2) -- (3,2) -- (3,1) -- (1,1) -- (1,0) -- (0,0) (0,1) -- (1,1) -- (1,2) (2,1) -- (2,2)}}
+z_{\tikz[scale=0.1]{\draw (0,0) -- (0,3) -- (2,3) -- (2,2) -- (1,2) -- (1,0) -- (0,0) (0,2) -- (1,2) -- (1,3) (0,1)--(1,1)}}}
{z_{\tikz[scale=0.1]{\draw (0,0) -- (0,2) -- (2,2) -- (2,1) -- (1,1) -- (1,0) -- (0,0) (0,1) -- (1,1) -- (1,2)}}}
\frac{z_{\tikz[scale=0.1]{\draw (0,0) -- (0,2) -- (4,2) -- (4,1) -- (1,1) -- (1,0) -- (0,0) (0,1) -- (1,1) -- (1,2) (2,1) -- (2,2) (3,1) -- (3,2)}}
+z_{\tikz[scale=0.1]{\draw (0,0) -- (0,3) -- (3,3) -- (3,2) -- (1,2) -- (1,0) -- (0,0) (0,2) -- (1,2) -- (1,3) (0,1)--(1,1) (2,2) -- (2,3)}} }
{z_{\tikz[scale=0.1]{\draw (0,0) -- (0,2) -- (3,2) -- (3,1) -- (1,1) -- (1,0) -- (0,0) (0,1) -- (1,1) -- (1,2) (2,1) -- (2,2)}}}
\right)
\left(
\frac{z_{\tikz[scale=0.1]{\draw (0,0) -- (0,3) -- (4,3) -- (4,2) -- (1,2) -- (1,0) -- (0,0) (0,2) -- (1,2) -- (1,3) (0,1)--(1,1) (2,2) -- (2,3) (3,2) -- (3,3)}}}
{z_{\tikz[scale=0.1]{\draw (0,0) -- (4,0) -- (4,1) -- (0,1) -- (0,0) (1,0) -- (1,1) (2,0) -- (2,1) (3,0) -- (3,1)}}}
\right)
\left(
\frac1
{z_{\tikz[scale=0.1]{\draw (0,0) -- (0,3) -- (1,3) -- (1,0) -- (0,0) (0,1) -- (1,1) (0,2) -- (1,2)}}}
\right).
\]
So the quotient is indeed $T_{[n-k,k]}=\frac{q}{z_{[n-k,k]}}$.

\end{rem}

\subsection{The quadric,} $\X=Q_d$. Note that both odd- and even-dimensional quadrics are homogeneous for $\Spin_{d+2}$, and that the parabolic subgroup is associated to the first vertex of the Dynkin diagram. Note, however, that $\Spin_{d+2}$ is of a different type depending on whether $d+2=2n+1$ is odd (type $\LGB_{n}$) or $d+2=2n$ is even (type $\LGD_{n}$), but the resulting homogeneous spaces are nonetheless similar enough to be considered at the same time. We find for $\wP$ the reduced expressions:
\[
\wP = \left\{\begin{array}{ll}
s_1s_2\cdots s_{n-1}(s_n)s_{n-1}s_{n-2}\cdots s_1, & \text{for $d=2n-1$,}\\
s_1s_2\cdots s_{n-2}(s_{n-1}s_n)s_{n-2}s_{n-3}\cdots s_1, & \text{for $d=2n-2$.}\\
\end{array}\right.
\]
For both odd and even quadrics we find $\wPrime=s_1$ and in both cases the simple reflection $s_1$ only appears as the first and the last factor, so it is easy enough to find the Laurent polynomial potential without using Corollary \ref{cor:Explicit_LP_LG_model_with_quiver_subsets}. We find the same potential in both cases, namely:
\[
\potZo(z) = \sum_{i=1}^d a_i + q\frac{a_1+a_d}{\prod_{i=1}^d a_i},
\]
where $d$ is the dimension of the quadric, and we decomposed $z=u_+t\bwop u_-$ with $u_-=\dy_1(-a_1)\dy_2(-a_2)\cdots\dy_1(-a_d)$ and $q=\sdr_1(t)$. Note that this Laurent polynomial expression is indeed identical to the ones obtained in \cite{Pech_Rietsch_Williams_Quadrics}, Propositions 2.2 and 3.11. For completeness' sake, let us consider two examples of quadrics and draw the associated quivers:
\begin{ex}
Consider the odd quadric $Q_7$ of type $\LGB_4$ and the even quadric $Q_8$ of type $\LGD_5$. These have for $\wP$ the reduced expressions $s_1s_2s_3(s_4)s_3s_2s_1$ and $s_1s_2s_3(s_4s_5)s_3s_2s_1$ respectively. Thus, we find the quivers
\[
\begin{tikzpicture}[scale=.4,style=very thick,baseline=0em]
	\draw[gray] (0,4) -- (2,4); \draw[gray, thick, double] (2,4) -- (3,4);
	\draw[gray,fill=gray]
	(1,4) circle (.15)
	(2,4) circle (.15)
	(3,4) circle (.15);
	\draw[gray,fill=white]
	(0,4) circle (.15);
	\node at (0,4)[above,gray]{\tiny$1$};
	\node at (1,4)[above,gray]{\tiny$2$};
	\node at (2,4)[above,gray]{\tiny$3$};
	\node at (3,4)[above,gray]{\tiny$4$};
	\draw 
	(0,3) -- (3,0) -- (0,-3);
	\draw[black, fill=black] 
	(0,3) circle (.15) 
	(1,2) circle (.15)
	(2,1) circle (.15)
	(3,0) circle (.15)
	(2,-1) circle (.15)
	(1,-2) circle (.15)
	(0,-3) circle (.15);
	\node at (0,3)[right]{\tiny$1$};
	\node at (1,2)[right]{\tiny$2$};
	\node at (2,1)[left]{\tiny$3$};
	\node at (3,0)[left]{\tiny$4$};
	\node at (2,-1)[left]{\tiny$5$};
	\node at (1,-2)[right]{\tiny$6$};
	\node at (0,-3)[right]{\tiny$7$};
\end{tikzpicture}
\qand
\begin{tikzpicture}[scale=.4,style=very thick,baseline=0em]
	\draw[gray] (0,4) -- (4,4) (2,4) -- (3,3.7);
	\draw[gray,fill=gray]
	(0,4) circle (.15)
	(1,4) circle (.15)
	(2,4) circle (.15)
	(3,3.7) circle (.15)
	(4,4) circle (.15);
	\draw[gray,fill=white]
	(0,4) circle (.15);
	\node at (0,4)[above,gray]{\tiny$1$};
	\node at (1,4)[above,gray]{\tiny$2$};
	\node at (2,4)[above,gray]{\tiny$3$};
	\node at (3,4)[above,gray]{\tiny$4$};
	\node at (4,4)[above,gray]{\tiny$5$};
	\draw 
	(0,3) -- (3,0) -- (0,-3)
	(2,1) -- (4,0) -- (2,-1);
	\draw[black, fill=black] 
	(0,3) circle (.15) 
	(1,2) circle (.15)
	(2,1) circle (.15)
	(3,0) circle (.15)
	(4,0) circle (.15)
	(2,-1) circle (.15)
	(1,-2) circle (.15)
	(0,-3) circle (.15);
	\node at (0,3)[right]{\tiny$1$};
	\node at (1,2)[right]{\tiny$2$};
	\node at (2,1)[left]{\tiny$3$};
	\node at (3,0)[left]{\tiny$4$};
	\node at (4,0)[right]{\tiny$5$};
	\node at (2,-1)[left]{\tiny$6$};
	\node at (1,-2)[right]{\tiny$7$};
	\node at (0,-3)[right]{\tiny$8$};
\end{tikzpicture}
\]
Also in the quivers it is clear that there are only two subsets associated to $\wPrime=s_1$ each containing one vertex: the top and the bottom one.
\end{ex}
\begin{rem}
Note that we have chosen to draw the Coxeter diagram of type $\LGD_5$ as \begin{tikzpicture}[scale=.3,style=very thick]
	\draw[gray] (0,0) -- (4,0) (2,0) -- (3,-.3);
	\draw[gray,fill=gray]
	(0,0) circle (.15)
	(1,0) circle (.15)
	(2,0) circle (.15)
	(3,-.3) circle (.15)
	(4,0) circle (.15);
	\draw[gray,fill=gray]
	(0,0) circle (.15);\end{tikzpicture} instead of as \begin{tikzpicture}[scale=.3,style=very thick]
	\draw[gray] (0,0) -- (2,0) (2,0) -- (3,-.3) (2,0) -- (3,.3);
	\draw[gray,fill=gray]
	(0,0) circle (.15)
	(1,0) circle (.15)
	(2,0) circle (.15)
	(3,-.3) circle (.15)
	(3,.3) circle (.15);
	\draw[gray,fill=gray]
	(0,0) circle (.15);\end{tikzpicture} or \begin{tikzpicture}[scale=.3,style=very thick]
	\draw[gray] (0,0) -- (3,0) (2,0) -- (2,-.6);
	\draw[gray,fill=gray]
	(0,0) circle (.15)
	(1,0) circle (.15)
	(2,0) circle (.15)
	(3,0) circle (.15)
	(2,-.6) circle (.15);
	\draw[gray,fill=gray]
	(0,0) circle (.15);\end{tikzpicture}. This is to avoid confusing the vertex $(4,1)$ with either $(5,1)$ or $(3,1)$ and $(3,2)$, respectively. We will continue to do this with the Coxeter diagrams of type $\LGD_n$ and $\LGE_n$. This causes the quivers for the orthogonal Grassmannians (subsection \ref{subsec:OG}), the Cayley plane (subsection \ref{subsec:Cayley}) and the Freudenthal variety (subsection \ref{subsec:Freudenthal}) to lose their symmetry, but this is purely aesthetic.
\end{rem}

\subsection{The Lagrangian Grassmannian,} $\X=\LG(n,2n)=\Sp_{2n}/P_n$, considered as a homogeneous space for the symplectic group of type $\LGC_n$. Note that the parabolic subgroup is given by
\[
P_n = \mtrx{{cc} \GL_n & \Mat_{n\times n} \\ 0 & \GL_n}\cap\Sp_{2n}.
\]
The coset representative for the longest Weyl group element has reduced expression:
\[
\wP 
= (s_n)(s_{n-1}s_n)(s_{n-2}s_{n-1}s_n)\cdots(s_1s_2\cdots s_n),
\]
Similarly, we find for $\wPrime$
\[
\wPrime 
= (s_n)(s_{n-1}s_n)(s_{n-2}s_{n-1}s_n)\cdots(s_2s_3\cdots s_n).
\]
The quiver $\PerrinQ_{\X}$ can be written as the triangle that is the left half of an $n\times n$-square, and the reduced subexpression for $\wPrime$ that is minimal with respect to the lexicographical order is the triangle that is the left half of the $(n-1)\times(n-1)$-square obtained after removing the bottom row and the rightmost column. Note that the Laurent polynomial expression of Theorem \ref{thm:Explicit_LP_LG_model} coincides with the description given in Proposition A.1 of \cite{Pech_Rietsch_Lagrangian_Grassmannians}.

\begin{ex}\label{ex:LP_for_Lagrangian_Grassmannians}
Consider $\LG(4,8)$ which is homogeneous for $\Sp_8$ of type $\LGC_4$. We find that $\wP=(s_4)(s_3s_4)(s_2s_3s_4)(s_1s_2s_3s_4)$, $\wPrime=(s_4)(s_3s_4)(s_2s_3s_4)$ and the following quiver:
\[
\begin{tikzpicture}[scale=.4,style=very thick,baseline=1em]
	\draw[gray] (1,5) -- (3,5); \draw[gray,thick,double] (3,5) -- (4,5);
	\draw[gray,fill=gray]
	(1,5) circle (.15)
	(2,5) circle (.15)
	(3,5) circle (.15);
	\draw[gray,fill=white]
	(4,5) circle (.15);
	\draw[gray]
	(1,5) node [above]{\tiny$1$}
	(2,5) node [above]{\tiny$2$}
	(3,5) node [above]{\tiny$3$}
	(4,5) node [above]{\tiny$4$};
	\draw 
	(4,4) -- (1,1) -- (4,-2)
	(3,3) -- (4,2) -- (2,0)
	(2,2) -- (4,0) -- (3,-1);
	\draw[black, fill=black] 
	(4,4) circle (.15) 
	(3,3) circle (.15)
	(4,2) circle (.15)
	(2,2) circle (.15)
	(3,1) circle (.15)
	(4,0) circle (.15)
	(1,1) circle (.15)
	(2,0) circle (.15)
	(3,-1) circle (.15)
	(4,-2) circle (.15);
	\draw
	(4,4) node [right]{\tiny$1$}
	(3,3) node [right]{\tiny$2$}
	(4,2) node [right]{\tiny$3$}
	(2,2) node [right]{\tiny$4$}
	(3,1) node [right]{\tiny$5$}
	(4,0) node [right]{\tiny$6$}
	(1,1) node [right]{\tiny$7$}
	(2,0) node [right]{\tiny$8$}
	(3,-1) node [right]{\tiny$9$}
	(4,-2) node [right]{\tiny$10$};
\end{tikzpicture}
\qquad
\begin{tabular}{cccc}
\begin{tikzpicture}[scale = .3,rotate = -90]
	\draw 
	(-3,1) -- (0,-2) -- (3,1)
	(-2,0) -- (-1,1) -- (1,-1)
	(2,0) -- (1,1) -- (-1,-1);
	\draw[thick,red] 
	(-3,1) circle (.3cm)
	(-2,0) circle (.3cm) 
	(-1,1) circle (.3cm)
	(-1,-1) circle (.3cm)
	(0,0) circle (.3cm)
	(1,1) circle (.3cm); 
\end{tikzpicture}
&
\begin{tikzpicture}[scale = .3,rotate = -90]
	\draw (-3,1) -- (0,-2) -- (3,1);
	\draw (-2,0) -- (-1,1) -- (1,-1);
	\draw (2,0) -- (1,1) -- (-1,-1);
	\draw[thick,red] (-3,1) circle (.3cm); 
	\draw[thick,red] (-2,0) circle (.3cm); 
	\draw[thick,red] (-1,1) circle (.3cm); 
	\draw[thick,red] (-1,-1) circle (.3cm); 
	\draw[thick,red] (0,0) circle (.3cm); 
	\draw[thick,red] (3,1) circle (.3cm); 
\end{tikzpicture}
&
\begin{tikzpicture}[scale = .3,rotate = -90]
	\draw (-3,1) -- (0,-2) -- (3,1);
	\draw (-2,0) -- (-1,1) -- (1,-1);
	\draw (2,0) -- (1,1) -- (-1,-1);
	\draw[thick,red] (-3,1) circle (.3cm); 
	\draw[thick,red] (-2,0) circle (.3cm); 
	\draw[thick,red] (-1,1) circle (.3cm); 
	\draw[thick,red] (-1,-1) circle (.3cm); 
	\draw[thick,red] (2,0) circle (.3cm); 
	\draw[thick,red] (3,1) circle (.3cm); 
\end{tikzpicture}
&
\begin{tikzpicture}[scale = .3,rotate = -90]
	\draw (-3,1) -- (0,-2) -- (3,1);
	\draw (-2,0) -- (-1,1) -- (1,-1);
	\draw (2,0) -- (1,1) -- (-1,-1);
	\draw[thick,red] (-3,1) circle (.3cm); 
	\draw[thick,red] (-2,0) circle (.3cm); 
	\draw[thick,red] (1,1) circle (.3cm); 
	\draw[thick,red] (-1,-1) circle (.3cm); 
	\draw[thick,red] (2,0) circle (.3cm); 
	\draw[thick,red] (3,1) circle (.3cm); 
\end{tikzpicture}
\\
\begin{tikzpicture}[scale = .3,rotate = -90]
	\draw (-3,1) -- (0,-2) -- (3,1);
	\draw (-2,0) -- (-1,1) -- (1,-1);
	\draw (2,0) -- (1,1) -- (-1,-1);
	\draw[thick,red] (-3,1) circle (.3cm); 
	\draw[thick,red] (-2,0) circle (.3cm); 
	\draw[thick,red] (-1,1) circle (.3cm); 
	\draw[thick,red] (1,-1) circle (.3cm); 
	\draw[thick,red] (2,0) circle (.3cm); 
	\draw[thick,red] (3,1) circle (.3cm); 
\end{tikzpicture}
&
\begin{tikzpicture}[scale = .3,rotate = -90]
	\draw (-3,1) -- (0,-2) -- (3,1);
	\draw (-2,0) -- (-1,1) -- (1,-1);
	\draw (2,0) -- (1,1) -- (-1,-1);
	\draw[thick,red] (-3,1) circle (.3cm); 
	\draw[thick,red] (-2,0) circle (.3cm); 
	\draw[thick,red] (1,1) circle (.3cm); 
	\draw[thick,red] (1,-1) circle (.3cm); 
	\draw[thick,red] (2,0) circle (.3cm); 
	\draw[thick,red] (3,1) circle (.3cm); 
\end{tikzpicture}
&
\begin{tikzpicture}[scale = .3,rotate = -90]
	\draw (-3,1) -- (0,-2) -- (3,1);
	\draw (-2,0) -- (-1,1) -- (1,-1);
	\draw (2,0) -- (1,1) -- (-1,-1);
	\draw[thick,red] (-3,1) circle (.3cm); 
	\draw[thick,red] (0,0) circle (.3cm); 
	\draw[thick,red] (1,1) circle (.3cm); 
	\draw[thick,red] (1,-1) circle (.3cm); 
	\draw[thick,red] (2,0) circle (.3cm); 
	\draw[thick,red] (3,1) circle (.3cm); 
\end{tikzpicture}
&
\begin{tikzpicture}[scale = .3,rotate = -90]
	\draw (-3,1) -- (0,-2) -- (3,1);
	\draw (-2,0) -- (-1,1) -- (1,-1);
	\draw (2,0) -- (1,1) -- (-1,-1);
	\draw[thick,red] (-1,1) circle (.3cm); 
	\draw[thick,red] (0,0) circle (.3cm); 
	\draw[thick,red] (1,1) circle (.3cm); 
	\draw[thick,red] (1,-1) circle (.3cm); 
	\draw[thick,red] (2,0) circle (.3cm); 
	\draw[thick,red] (3,1) circle (.3cm); 
\end{tikzpicture}
\end{tabular}
\]
To the right we have drawn the eight subsets corresponding to reduced subexpressions of $\wPrime$ inside the reduced expression for $\wP$. So, considering the decomposition $z=u_+t\bwop u_-$ with $(u_-)^{-1} = \dy_4(-a_1)\dy_3(-a_2)\cdots\dy_4(-a_{10})$ and $q=\sdr_4(t)$, we find
\[
\potZo(z) = \sum_{i=1}^{10}a_i + q\frac{P(a_i)}{a_1a_2a_3a_4a_5a_6a_7a_8a_9a_{10}},
\]
where
\ali{
P(a_i) &= a_1a_2a_3a_4a_5a_6 + a_1a_2a_3a_4a_5a_{10}+a_1a_2a_3a_4a_9a_{10} + a_1a_2a_4a_6a_9a_{10} \\
& + a_1a_2a_3a_8a_9a_{10} + a_1a_2a_6a_8a_9a_{10} + a_1a_5a_6a_8a_9a_{10} + a_3a_5a_6a_8a_9a_{10}
}
are the eight reduced expression for $\wPrime$ obtained from the diagrams above.
\end{ex}

\subsection{The orthogonal Grassmannian,}\label{subsec:OG} $\X=\OG(n,2n)=\Spin_{2n}/\P_n$, considered as a homogeneous space for the spin group of type $\LGD_n$. We fix for the minimal coset representative $\wP$ of the longest element the reduced expression
\[
\wP = \left\{\begin{array}{ll}
s_{n-1}(s_{n-2})s_n(s_{n-3}s_{n-2})s_{n-1}\cdots(s_1s_2\cdots s_{n-2})s_n, & \text{for $n$ odd,}\\
s_n(s_{n-2})s_{n-1}(s_{n-3}s_{n-2})s_n\cdots (s_1s_2\cdots s_{n-2})s_n, & \text{for $n$ even.}
\end{array}\right.
\]
Now, for $\wPrime$ we find the reduced expression:
\[
\wPrime = \left\{\begin{array}{ll}
s_{n-1}(s_{n-2})s_n(s_{n-3}s_{n-2})s_{n-1}\cdots(s_3s_4\cdots s_{n-2})s_n, & \text{for $n$ odd,}\\
s_n(s_{n-2})s_{n-1}(s_{n-3}s_{n-2})s_n\cdots (s_3s_4\cdots s_{n-2})s_n, & \text{for $n$ even.}
\end{array}\right.
\]
Note that these expressions are very similar to those of Lagrangian Grassmannians except that the expressions for the orthogonal Grassmannians alternates between $s_n$ and $s_{n-1}$. The quiver $\PerrinQ_{\X}$ is therefore the left half of an $(n-1)\times(n-1)$-square with the longest column split over two columns. The subset that corresponds to the reduced subexpression for $\wPrime$ that is minimal with respect to the lexicographical order is in this case the triangle that is the left half of the $(n-3)\times(n-3)$-square obtained after removing the two bottom rows and the two rightmost columns.

\begin{ex}\label{ex:OG}
Consider $\OG(5,10)$ which is homogeneous for $\Spin_{10}$ of type $\LGD_5$. We find that $\wP=s_4(s_3)s_5(s_2s_3)s_4(s_1s_2s_3)s_5$ and $\wPrime=s_4(s_3)s_5$. We find the quiver:
\[
\begin{tikzpicture}[scale=.4,style=very thick,baseline=1em]
	\draw[gray] (1,5) -- (5,5); \draw[gray] (3,5) -- (4,4.7);
	\draw[gray,fill=gray]
	(1,5) circle (.15)
	(2,5) circle (.15)
	(3,5) circle (.15)
	(4,4.7) circle (.15);
	\draw[gray,fill=white]
	(5,5) circle (.15);
	\node at (1,5)[above,gray]{\tiny$1$};
	\node at (2,5)[above,gray]{\tiny$2$};
	\node at (3,5)[above,gray]{\tiny$3$};
	\node at (4,5)[above,gray]{\tiny$4$};
	\node at (5,5)[above,gray]{\tiny$5$};
	\draw 
	(4,4) -- (1,1) -- (3,-1) -- (5,-2)
	(3,3) -- (5,2) -- (3,1) -- (2,0)
	(2,2) -- (4,0) -- (3,-1);
	\draw[black, fill=black] 
	(4,4) circle (.15) 
	(3,3) circle (.15)
	(5,2) circle (.15)
	(2,2) circle (.15)
	(3,1) circle (.15)
	(4,0) circle (.15)
	(1,1) circle (.15)
	(2,0) circle (.15)
	(3,-1) circle (.15)
	(5,-2) circle (.15);
	\node at (4,4)[left]{\tiny$1$};
	\node at (3,3)[left]{\tiny$2$};
	\node at (5,2)[right]{\tiny$3$};
	\node at (2,2)[left]{\tiny$4$};
	\node at (3,1)[left]{\tiny$5$};
	\node at (4,0)[left]{\tiny$6$};
	\node at (1,1)[left]{\tiny$7$};
	\node at (2,0)[left]{\tiny$8$};
	\node at (3,-1)[left]{\tiny$9$};
	\node at (5,-2)[right]{\tiny$10$};
\end{tikzpicture}
\qquad
\begin{tikzpicture}[scale = .3,rotate=-90,baseline=-1em]
	\draw (-3,1) -- (0,-2) -- (2,0) -- (3,2);
	\draw (-2,0) -- (-1,2) -- (0,0) -- (1,-1);
	\draw (2,0) -- (1,1) -- (-1,-1);
	\draw[thick,red] (-3,1) circle (.3cm); 
	\draw[thick,red] (-2,0) circle (.3cm); 
	\draw[thick,red] (-1,2) circle (.3cm); 
\end{tikzpicture}\quad
\begin{tikzpicture}[scale = .3,rotate=-90,baseline=-1em]
	\draw (-3,1) -- (0,-2) -- (2,0) -- (3,2);
	\draw (-2,0) -- (-1,2) -- (0,0) -- (1,-1);
	\draw (2,0) -- (1,1) -- (-1,-1);
	\draw[thick,red] (-3,1) circle (.3cm); 
	\draw[thick,red] (-2,0) circle (.3cm); 
	\draw[thick,red] (3,2) circle (.3cm); 
\end{tikzpicture}\quad
\begin{tikzpicture}[scale = .3,rotate=-90,baseline=-1em]
	\draw (-3,1) -- (0,-2) -- (2,0) -- (3,2);
	\draw (-2,0) -- (-1,2) -- (0,0) -- (1,-1);
	\draw (2,0) -- (1,1) -- (-1,-1);
	\draw[thick,red] (-3,1) circle (.3cm); 
	\draw[thick,red] (0,0) circle (.3cm); 
	\draw[thick,red] (3,2) circle (.3cm); 
\end{tikzpicture}\quad
\begin{tikzpicture}[scale = .3,rotate=-90,baseline=-1em]
	\draw (-3,1) -- (0,-2) -- (2,0) -- (3,2);
	\draw (-2,0) -- (-1,2) -- (0,0) -- (1,-1);
	\draw (2,0) -- (1,1) -- (-1,-1);
	\draw[thick,red] (-3,1) circle (.3cm); 
	\draw[thick,red] (2,0) circle (.3cm); 
	\draw[thick,red] (3,2) circle (.3cm); 
\end{tikzpicture}\quad
\begin{tikzpicture}[scale = .3,rotate=-90,baseline=-1em]
	\draw (-3,1) -- (0,-2) -- (2,0) -- (3,2);
	\draw (-2,0) -- (-1,2) -- (0,0) -- (1,-1);
	\draw (2,0) -- (1,1) -- (-1,-1);
	\draw[thick,red] (1,1) circle (.3cm); 
	\draw[thick,red] (2,0) circle (.3cm); 
	\draw[thick,red] (3,2) circle (.3cm); 
\end{tikzpicture}
\]
To the right we have drawn the five subsets corresponding to reduced subexpressions of $\wPrime$ inside the reduced expression for $\wP$. So, we find
\[
\potZo(z) = \sum_{i=1}^{10}a_i + q\frac{a_1a_2a_3 + a_1a_2a_{10} + a_1a_5a_{10} + a_1a_9a_{10}+a_6a_9a_{10}}{a_1a_2a_3a_4a_5a_6a_7a_8a_9a_{10}},
\]
where $z=u_+t\bwop u_-$ with $(u_-)^{-1} = \dy_4(-a_1)\dy_3(-a_2)\cdots\dy_5(-a_{10})$ and $q=\sdr_5(t)$.
\end{ex}

\subsection{The Cayley plane,}\label{subsec:Cayley} $\X = \OP^2 = \LGE_6^\SC/\P_6$, considered as a homogeneous space for the simply-connected Lie group $\LGE_6^\SC$ of type $\LGE_6$. We fix for $\wP$ the reduced expression
\[
\wP 
= s_1s_3s_4s_2s_5s_6s_4s_5s_3s_4s_2s_1s_3s_4s_5s_6
\]
and we find for $\wPrime$ the (unique) reduced expression
$\wPrime = s_1s_3s_4s_5s_6$.
Thus, the quiver is as follows:
\[
\begin{tikzpicture}[scale=.4,style=very thick,baseline=0.25em]
	\draw[gray] (0,6) -- (5,6); \draw[gray] (2,6) -- (3,5.7);
	\draw[gray,fill=gray]
	(0,6) circle (.15)
	(1,6) circle (.15)
	(2,6) circle (.15)
	(3,5.7) circle (.15)
	(4,6) circle (.15);
	\draw[gray,fill=white]
	(5,6) circle (.15);
	\node at (0,6)[above,gray]{\tiny$1$};
	\node at (3,6)[above,gray]{\tiny$2$};
	\node at (1,6)[above,gray]{\tiny$3$};
	\node at (2,6)[above,gray]{\tiny$4$};
	\node at (4,6)[above,gray]{\tiny$5$};
	\node at (5,6)[above,gray]{\tiny$6$};
	\draw 
	(0,5) -- (3,2) -- (0,-1) -- (2,-3)--(4,-4)--(5,-5)
	(2,3) -- (4,2) -- (2,1) -- (4,0)
	(4,2) -- (5,1) -- (4,0) -- (2,-1) -- (3,-2) -- (2,-3)
	(1,0) -- (2,-1) -- (1,-2);
	\draw[black, fill=black] 
	(0,5) circle (.15) 
	(1,4) circle (.15)
	(2,3) circle (.15)
	(3,2) circle (.15)
	(4,2) circle (.15)
	(2,1) circle (.15)
	(5,1) circle (.15)
	(1,0) circle (.15)
	(4,0) circle (.15)
	(0,-1) circle (.15)
	(2,-1) circle (.15)
	(1,-2) circle (.15)
	(3,-2) circle (.15)
	(2,-3) circle (.15)
	(4,-4) circle (.15)
	(5,-5) circle (.15);
	\node at (0,5)[right]{\tiny$1$};
	\node at (1,4)[right]{\tiny$2$};
	\node at (2,3)[left]{\tiny$3$};
	\node at (3,2)[left]{\tiny$4$};
	\node at (4,2)[right]{\tiny$5$};
	\node at (5,1)[right]{\tiny$6$};
	\node at (2,1)[left]{\tiny$7$};
	\node at (4,0)[right]{\tiny$8$};
	\node at (1,0)[left]{\tiny$9$};
	\node at (2,-1)[below=1pt, right]{\tiny$10$};
	\node at (3,-2)[right]{\tiny$11$};
	\node at (0,-1)[left]{\tiny$12$};
	\node at (1,-2)[left]{\tiny$13$};
	\node at (2,-3)[left]{\tiny$14$};
	\node at (4,-4)[below=1pt,left]{\tiny$15$};
	\node at (5,-5)[left]{\tiny$16$};
\end{tikzpicture}
\quad
\begin{tabular}{cccccc}
\begin{tikzpicture}[scale = .25,baseline=-1em]
	\draw 
	(0,5) -- (3,2) -- (0,-1) -- (2,-3)--(4,-4)--(5,-5)
	(2,3) -- (4,2) -- (2,1) -- (4,0)
	(4,2) -- (5,1) -- (4,0) -- (2,-1) -- (3,-2) -- (2,-3)
	(1,0) -- (2,-1) -- (1,-2);
	\draw[thick,red]
	(0,5) circle (.3cm)
	(1,4) circle (.3cm) 
	(2,3) circle (.3cm)
	(4,2) circle (.3cm)
	(5,1) circle (.3cm);
\end{tikzpicture}
&
\begin{tikzpicture}[scale = .25,baseline=-1em]
	\draw 
	(0,5) -- (3,2) -- (0,-1) -- (2,-3)--(4,-4)--(5,-5)
	(2,3) -- (4,2) -- (2,1) -- (4,0)
	(4,2) -- (5,1) -- (4,0) -- (2,-1) -- (3,-2) -- (2,-3)
	(1,0) -- (2,-1) -- (1,-2);
	\draw[thick,red]
	(0,5) circle (.3cm)
	(1,4) circle (.3cm) 
	(2,3) circle (.3cm)
	(4,2) circle (.3cm)
	(5,-5) circle (.3cm);
\end{tikzpicture}
&
\begin{tikzpicture}[scale = .25,baseline=-1em]
	\draw 
	(0,5) -- (3,2) -- (0,-1) -- (2,-3)--(4,-4)--(5,-5)
	(2,3) -- (4,2) -- (2,1) -- (4,0)
	(4,2) -- (5,1) -- (4,0) -- (2,-1) -- (3,-2) -- (2,-3)
	(1,0) -- (2,-1) -- (1,-2);
	\draw[thick,red]
	(0,5) circle (.3cm)
	(1,4) circle (.3cm) 
	(2,3) circle (.3cm)
	(4,0) circle (.3cm)
	(5,-5) circle (.3cm);
\end{tikzpicture}
&
\begin{tikzpicture}[scale = .25,baseline=-1em]
	\draw 
	(0,5) -- (3,2) -- (0,-1) -- (2,-3)--(4,-4)--(5,-5)
	(2,3) -- (4,2) -- (2,1) -- (4,0)
	(4,2) -- (5,1) -- (4,0) -- (2,-1) -- (3,-2) -- (2,-3)
	(1,0) -- (2,-1) -- (1,-2);
	\draw[thick,red]
	(0,5) circle (.3cm)
	(1,4) circle (.3cm) 
	(2,1) circle (.3cm)
	(4,0) circle (.3cm)
	(5,-5) circle (.3cm);
\end{tikzpicture}
&
\begin{tikzpicture}[scale = .25,baseline=-1em]
	\draw 
	(0,5) -- (3,2) -- (0,-1) -- (2,-3)--(4,-4)--(5,-5)
	(2,3) -- (4,2) -- (2,1) -- (4,0)
	(4,2) -- (5,1) -- (4,0) -- (2,-1) -- (3,-2) -- (2,-3)
	(1,0) -- (2,-1) -- (1,-2);
	\draw[thick,red]
	(0,5) circle (.3cm)
	(1,4) circle (.3cm) 
	(2,3) circle (.3cm)
	(4,-4) circle (.3cm)
	(5,-5) circle (.3cm);
\end{tikzpicture}
&
\begin{tikzpicture}[scale = .25,baseline=-1em]
	\draw 
	(0,5) -- (3,2) -- (0,-1) -- (2,-3)--(4,-4)--(5,-5)
	(2,3) -- (4,2) -- (2,1) -- (4,0)
	(4,2) -- (5,1) -- (4,0) -- (2,-1) -- (3,-2) -- (2,-3)
	(1,0) -- (2,-1) -- (1,-2);
	\draw[thick,red]
	(0,5) circle (.3cm)
	(1,4) circle (.3cm) 
	(2,1) circle (.3cm)
	(4,-4) circle (.3cm)
	(5,-5) circle (.3cm);
\end{tikzpicture}
\\
\begin{tikzpicture}[scale = .25,baseline=-1em]
	\draw 
	(0,5) -- (3,2) -- (0,-1) -- (2,-3)--(4,-4)--(5,-5)
	(2,3) -- (4,2) -- (2,1) -- (4,0)
	(4,2) -- (5,1) -- (4,0) -- (2,-1) -- (3,-2) -- (2,-3)
	(1,0) -- (2,-1) -- (1,-2);
	\draw[thick,red]
	(0,5) circle (.3cm)
	(1,4) circle (.3cm) 
	(2,-1) circle (.3cm)
	(4,-4) circle (.3cm)
	(5,-5) circle (.3cm);
\end{tikzpicture}
&
\begin{tikzpicture}[scale = .25,baseline=-1em]
	\draw 
	(0,5) -- (3,2) -- (0,-1) -- (2,-3)--(4,-4)--(5,-5)
	(2,3) -- (4,2) -- (2,1) -- (4,0)
	(4,2) -- (5,1) -- (4,0) -- (2,-1) -- (3,-2) -- (2,-3)
	(1,0) -- (2,-1) -- (1,-2);
	\draw[thick,red]
	(0,5) circle (.3cm)
	(1,0) circle (.3cm) 
	(2,-1) circle (.3cm)
	(4,-4) circle (.3cm)
	(5,-5) circle (.3cm);
\end{tikzpicture}
&
\begin{tikzpicture}[scale = .25,baseline=-1em]
	\draw 
	(0,5) -- (3,2) -- (0,-1) -- (2,-3)--(4,-4)--(5,-5)
	(2,3) -- (4,2) -- (2,1) -- (4,0)
	(4,2) -- (5,1) -- (4,0) -- (2,-1) -- (3,-2) -- (2,-3)
	(1,0) -- (2,-1) -- (1,-2);
	\draw[thick,red]
	(0,5) circle (.3cm)
	(1,4) circle (.3cm) 
	(2,-3) circle (.3cm)
	(4,-4) circle (.3cm)
	(5,-5) circle (.3cm);
\end{tikzpicture}
&
\begin{tikzpicture}[scale = .25,baseline=-1em]
	\draw 
	(0,5) -- (3,2) -- (0,-1) -- (2,-3)--(4,-4)--(5,-5)
	(2,3) -- (4,2) -- (2,1) -- (4,0)
	(4,2) -- (5,1) -- (4,0) -- (2,-1) -- (3,-2) -- (2,-3)
	(1,0) -- (2,-1) -- (1,-2);
	\draw[thick,red]
	(0,5) circle (.3cm)
	(1,0) circle (.3cm) 
	(2,-3) circle (.3cm)
	(4,-4) circle (.3cm)
	(5,-5) circle (.3cm);
\end{tikzpicture}
&
\begin{tikzpicture}[scale = .25,baseline=-1em]
	\draw 
	(0,5) -- (3,2) -- (0,-1) -- (2,-3)--(4,-4)--(5,-5)
	(2,3) -- (4,2) -- (2,1) -- (4,0)
	(4,2) -- (5,1) -- (4,0) -- (2,-1) -- (3,-2) -- (2,-3)
	(1,0) -- (2,-1) -- (1,-2);
	\draw[thick,red]
	(0,5) circle (.3cm)
	(1,-2) circle (.3cm) 
	(2,-3) circle (.3cm)
	(4,-4) circle (.3cm)
	(5,-5) circle (.3cm);
\end{tikzpicture}
&
\begin{tikzpicture}[scale = .25,baseline=-1em]
	\draw 
	(0,5) -- (3,2) -- (0,-1) -- (2,-3)--(4,-4)--(5,-5)
	(2,3) -- (4,2) -- (2,1) -- (4,0)
	(4,2) -- (5,1) -- (4,0) -- (2,-1) -- (3,-2) -- (2,-3)
	(1,0) -- (2,-1) -- (1,-2);
	\draw[thick,red]
	(0,-1) circle (.3cm)
	(1,-2) circle (.3cm) 
	(2,-3) circle (.3cm)
	(4,-4) circle (.3cm)
	(5,-5) circle (.3cm);
\end{tikzpicture}
\end{tabular}
\]
To the right we have drawn the twelve subsets corresponding to subexpressions of $\wPrime$. 
We find for $z=u_+t\bwop u_-$ with $(u_-)^{-1}=\dy_1(-a_1)\dy_3(-a_2)\cdots\dy_6(-a_{16})$
\[
\potZo(z) = \sum_{i=1}^{16} a_i + q\frac{P(a_i)}{\prod_{i=1}^{16} a_i},
\]
with $q=\sdr_6(t)$ and in order of the drawn subsets
\ali{
\hspace{-3em}P(a_i) &= a_1a_2a_3a_5a_6+a_1a_2a_3a_5a_{16}+a_1a_2a_3a_{8}a_{16}+a_1a_2a_7a_{8}a_{16}+a_1a_2a_3a_{15}a_{16}+a_1a_2a_7a_{15}a_{16}\\
&{}+a_1a_2a_{10}a_{15}a_{16}+a_1a_9a_{10}a_{15}a_{16}+a_1a_2a_{14}a_{15}a_{16}+a_1a_9a_{14}a_{15}a_{16}+a_1a_{13}a_{14}a_{15}a_{16}+a_{12}a_{13}a_{14}a_{15}a_{16}
}

\subsection{The Freudenthal variety,}\label{subsec:Freudenthal} $\X=\LGE^\SC_7/\P_7$, considered as a homogeneous space for the simply-connected Lie group $\LGE^\SC_7$ of type $\LGE_7$. We fix for $\wP$ the reduced expression
\[
\wP = s_7s_6s_5s_4s_2s_3s_4s_5s_6s_7s_1s_3s_4s_2s_5s_6s_4s_5s_3s_4s_2s_1s_3s_4s_5s_6s_7,
\]
and $\wPrime$ has reduced expression 
$\wPrime = s_7s_6s_5s_4(s_2s_3)s_4s_5s_6s_7$.
The quiver $\PerrinQ_{\X}$ is of the form
\[
\begin{tikzpicture}[scale=.38,style=very thick,baseline=0.5em]
	\draw[gray] (0,11) -- (6,11); \draw[gray] (2,11) -- (3,10.7);
	\draw[gray,fill=gray]
	(0,11) circle (.15)
	(1,11) circle (.15)
	(2,11) circle (.15)
	(3,10.7) circle (.15)
	(4,11) circle (.15)
	(5,11) circle (.15);
	\draw[gray,fill=white]
	(6,11) circle (.15);
	\node at (0,11)[above,gray]{\tiny$1$};
	\node at (3,11)[above,gray]{\tiny$2$};
	\node at (1,11)[above,gray]{\tiny$3$};
	\node at (2,11)[above,gray]{\tiny$4$};
	\node at (4,11)[above,gray]{\tiny$5$};
	\node at (5,11)[above,gray]{\tiny$6$};
	\node at (6,11)[above,gray]{\tiny$7$};
	\draw 
	(6,10) -- (4,8) -- (2,7) -- (0,5) -- (3,2) -- (0,-1) -- (2,-3)--(4,-4)--(6,-6)
	(2,7) -- (3,6) -- (2,5) -- (4,4) -- (2,3) -- (4,2) -- (2,1) -- (4,0)
	(4,2) -- (5,1) -- (4,0) -- (2,-1) -- (3,-2) -- (2,-3)
	(1,0) -- (2,-1) -- (1,-2)
	(1,6) -- (2,5) -- (1,4) (4,4) -- (6,2) -- (5,1) (5,3) -- (4,2);
	\draw[black, fill=black] 
	(6,10) circle (.15) 
	(5,9) circle (.15)
	(4,8) circle (.15)
	(2,7) circle (.15)
	(3,6) circle (.15)
	(1,6) circle (.15) 
	(2,5) circle (.15)
	(4,4) circle (.15)
	(5,3) circle (.15)
	(6,2) circle (.15)	
	(0,5) circle (.15) 
	(1,4) circle (.15)
	(2,3) circle (.15)
	(3,2) circle (.15)
	(4,2) circle (.15)
	(2,1) circle (.15)
	(5,1) circle (.15)
	(1,0) circle (.15)
	(4,0) circle (.15)
	(0,-1) circle (.15)
	(2,-1) circle (.15)
	(1,-2) circle (.15)
	(3,-2) circle (.15)
	(2,-3) circle (.15)
	(4,-4) circle (.15)
	(5,-5) circle (.15)
	(6,-6) circle (.15);
	\node at (6,10)[left]{\tiny$1$};
	\node at (5,9)[left]{\tiny$2$};
	\node at (4,8)[above=1pt,left]{\tiny$3$};
	\node at (2,7)[left]{\tiny$4$};
	\node at (3,6)[right]{\tiny$5$};
	\node at (1,6)[left]{\tiny$6$};
	\node at (2,5)[above=1pt,right]{\tiny$7$};
	\node at (4,4)[right]{\tiny$8$};
	\node at (5,3)[right]{\tiny$9$};
	\node at (6,2)[right]{\tiny$10$};
	\node at (0,5)[left]{\tiny$11$};
	\node at (1,4)[left]{\tiny$12$};
	\node at (2,3)[left]{\tiny$13$};
	\node at (3,2)[left]{\tiny$14$};
	\node at (4,2)[right]{\tiny$15$};
	\node at (5,1)[right]{\tiny$16$};
	\node at (2,1)[left]{\tiny$17$};
	\node at (4,0)[right]{\tiny$18$};
	\node at (1,0)[left]{\tiny$19$};
	\node at (2,-1)[below=1pt, right]{\tiny$20$};
	\node at (3,-2)[right]{\tiny$21$};
	\node at (0,-1)[left]{\tiny$22$};
	\node at (1,-2)[left]{\tiny$23$};
	\node at (2,-3)[left]{\tiny$24$};
	\node at (4,-4)[below=1pt,left]{\tiny$25$};
	\node at (5,-5)[left]{\tiny$26$};
	\node at (6,-6)[left]{\tiny$27$};
\end{tikzpicture}
\quad
\begin{tikzpicture}[scale = .25,baseline=-1em]
	\draw 
	(6,10) -- (4,8) -- (2,7) -- (0,5) -- (3,2) -- (0,-1) -- (2,-3)--(4,-4)--(6,-6)
	(2,7) -- (3,6) -- (2,5) -- (4,4) -- (2,3) -- (4,2) -- (2,1) -- (4,0)
	(4,2) -- (5,1) -- (4,0) -- (2,-1) -- (3,-2) -- (2,-3)
	(1,0) -- (2,-1) -- (1,-2)
	(1,6) -- (2,5) -- (1,4) (4,4) -- (6,2) -- (5,1) (5,3) -- (4,2);
	\draw[thick,red]
	(6,10) circle (.3cm)
	(5,9) circle (.3cm)
	(4,8) circle (.3cm)
	(2,7) circle (.3cm)
	(1,6) circle (.3cm) 
	(3,6) circle (.3cm)
	(2,5) circle (.3cm)
	(4,4) circle (.3cm)
	(5,3) circle (.3cm)
	(6,2) circle (.3cm);
\end{tikzpicture} 
\]
To the right we have drawn the subset corresponding to the reduced subexpression of $\wPrime$ that is minimal with respect to the lexicographical order. However, in this case, there are 78 reduced subexpressions, so instead of giving all the corresponding subsets, we will simply list in lexicographical order the elements of $\wPrimeSubExp$ consisting of sequences of subindices $(i_1,\ldots,i_{10})$ of $\wP=s_{r_1}\cdots s_{r_\ellwP}$ such that $\wPrime=s_{i_1}\cdots s_{i_{10}}$, see Table \ref{tab:Freudenthal_wPrime_SubExp}.

We find for $z=u_+t\bwop u_-$ with $(u_-)^{-1}=\dy_7(-a_1)\dy_6(-a_2)\cdots\dy_7(-a_{27})$ that
\[
\potZo(z) = \sum_{i=1}^{27} a_i + q\frac{P(a_i)}{\prod_{i=1}^{27} a_i},
\]
with $q=\sdr_7(t)$ and $P(a_i) = \sum_{(i_j)\in\wPrimeSubExp}\prod_{j=1}^{10} a_{i_j}$ is a homogeneous polynomial of degree 10 with 78 terms.

\begin{table}[htb]%
\tiny\ali{
\hspace{-3em}\wPrimeSubExp =\bigl\{
&(~\,1,~\,2,~\,3,~\,4,~\,5,~\,6,~\,7,~\,8,~\,9,10),(~\,1,~\,2,~\,3,~\,4,~\,5,~\,6,~\,7,~\,8,~\,9,27),(~\,1,~\,2,~\,3,~\,4,~\,5,~\,6,~\,7,~\,8,16,27), \\					
&(~\,1,~\,2,~\,3,~\,4,~\,5,~\,6,~\,7,~\,8,26,27),(~\,1,~\,2,~\,3,~\,4,~\,5,~\,6,~\,7,15,16,27),(~\,1,~\,2,~\,3,~\,4,~\,5,~\,6,~\,7,15,26,27), \\				
&(~\,1,~\,2,~\,3,~\,4,~\,5,~\,6,~\,7,18,26,27),(~\,1,~\,2,~\,3,~\,4,~\,5,~\,6,~\,7,25,26,27),(~\,1,~\,2,~\,3,~\,4,~\,5,~\,6,13,15,16,27), \\			
&(~\,1,~\,2,~\,3,~\,4,~\,5,~\,6,13,15,26,27),(~\,1,~\,2,~\,3,~\,4,~\,5,~\,6,13,18,26,27),(~\,1,~\,2,~\,3,~\,4,~\,5,~\,6,13,25,26,27), \\			
&(~\,1,~\,2,~\,3,~\,4,~\,5,~\,6,17,18,26,27),(~\,1,~\,2,~\,3,~\,4,~\,5,~\,6,17,25,26,27),(~\,1,~\,2,~\,3,~\,4,~\,5,~\,6,20,25,26,27), \\			
&(~\,1,~\,2,~\,3,~\,4,~\,5,~\,6,24,25,26,27),(~\,1,~\,2,~\,3,~\,4,~\,5,12,13,15,16,27),(~\,1,~\,2,~\,3,~\,4,~\,5,12,13,15,26,27), \\			
&(~\,1,~\,2,~\,3,~\,4,~\,5,12,13,18,26,27),(~\,1,~\,2,~\,3,~\,4,~\,5,12,13,25,26,27),(~\,1,~\,2,~\,3,~\,4,~\,5,12,17,18,26,27), \\			
&(~\,1,~\,2,~\,3,~\,4,~\,5,12,17,25,26,27),(~\,1,~\,2,~\,3,~\,4,~\,5,12,20,25,26,27),(~\,1,~\,2,~\,3,~\,4,~\,5,12,24,25,26,27), \\			
&(~\,1,~\,2,~\,3,~\,4,~\,5,19,20,25,26,27),(~\,1,~\,2,~\,3,~\,4,~\,5,19,24,25,26,27),(~\,1,~\,2,~\,3,~\,4,~\,5,23,24,25,26,27), \\			
&(~\,1,~\,2,~\,3,~\,4,~\,6,14,17,18,26,27),(~\,1,~\,2,~\,3,~\,4,~\,6,14,17,25,26,27),(~\,1,~\,2,~\,3,~\,4,~\,6,14,20,25,26,27), \\			
&(~\,1,~\,2,~\,3,~\,4,~\,6,14,24,25,26,27), (~\,1,~\,2,~\,3,~\,4,~\,6,21,24,25,26,27),(~\,1,~\,2,~\,3,~\,4,12,14,17,18,26,27), \\		
&(~\,1,~\,2,~\,3,~\,4,12,14,17,25,26,27),(~\,1,~\,2,~\,3,~\,4,12,14,20,25,26,27),(~\,1,~\,2,~\,3,~\,4,12,14,24,25,26,27), \\			
&(~\,1,~\,2,~\,3,~\,4,12,21,24,25,26,27),(~\,1,~\,2,~\,3,~\,4,14,19,20,25,26,27),(~\,1,~\,2,~\,3,~\,4,14,19,24,25,26,27), \\			
&(~\,1,~\,2,~\,3,~\,4,14,23,24,25,26,27),(~\,1,~\,2,~\,3,~\,4,19,21,24,25,26,27),(~\,1,~\,2,~\,3,~\,4,21,23,24,25,26,27), \\			
&(~\,1,~\,2,~\,3,~\,7,12,14,17,18,26,27),(~\,1,~\,2,~\,3,~\,7,12,14,17,25,26,27),(~\,1,~\,2,~\,3,~\,7,12,14,20,25,26,27), \\			
&(~\,1,~\,2,~\,3,~\,7,12,14,24,25,26,27),(~\,1,~\,2,~\,3,~\,7,12,21,24,25,26,27),(~\,1,~\,2,~\,3,~\,7,14,19,20,25,26,27), \\			
&(~\,1,~\,2,~\,3,~\,7,14,19,24,25,26,27),(~\,1,~\,2,~\,3,~\,7,14,23,24,25,26,27),(~\,1,~\,2,~\,3,~\,7,19,21,24,25,26,27), \\			
&(~\,1,~\,2,~\,3,~\,7,21,23,24,25,26,27),(~\,1,~\,2,~\,3,13,14,19,20,25,26,27),(~\,1,~\,2,~\,3,13,14,19,24,25,26,27), \\			
&(~\,1,~\,2,~\,3,13,14,23,24,25,26,27),(~\,1,~\,2,~\,3,13,19,21,24,25,26,27),(~\,1,~\,2,~\,3,13,21,23,24,25,26,27), \\			
&(~\,1,~\,2,~\,3,17,19,21,24,25,26,27),(~\,1,~\,2,~\,3,17,21,23,24,25,26,27),(~\,1,~\,2,~\,3,20,21,23,24,25,26,27), \\			
&(~\,1,~\,2,~\,8,13,14,19,20,25,26,27),(~\,1,~\,2,~\,8,13,14,19,24,25,26,27),(~\,1,~\,2,~\,8,13,14,23,24,25,26,27), \\			
&(~\,1,~\,2,~\,8,13,19,21,24,25,26,27),(~\,1,~\,2,~\,8,13,21,23,24,25,26,27),(~\,1,~\,2,~\,8,17,19,21,24,25,26,27), \\			
&(~\,1,~\,2,~\,8,17,21,23,24,25,26,27),(~\,1,~\,2,~\,8,20,21,23,24,25,26,27),(~\,1,~\,2,15,17,19,21,24,25,26,27), \\			
&(~\,1,~\,2,15,17,21,23,24,25,26,27),(~\,1,~\,2,15,20,21,23,24,25,26,27),(~\,1,~\,2,18,20,21,23,24,25,26,27), \\			
&(~\,1,~\,9,15,17,19,21,24,25,26,27),(~\,1,~\,9,15,17,21,23,24,25,26,27),(~\,1,~\,9,15,20,21,23,24,25,26,27), \\			
&(~\,1,~\,9,18,20,21,23,24,25,26,27),(~\,1,16,18,20,21,23,24,25,26,27),(10,16,18,20,21,23,24,25,26,27)\bigr\}	
}
\normalsize 
\caption{The full list of all the 78 elements of $\wPrimeSubExp$ for the Freudenthal variety $\LGE_7^\SC$.}
\label{tab:Freudenthal_wPrime_SubExp}
\end{table}

\bibliographystyle{amsalpha}   
\bibliography{LPpotComHomSp-Biblio}

\end{document}